\def\blfootnote{\gdef\@thefnmark{}\@footnotetext}
\titleformat*{\section}{\normalfont\Large\color{blue!80!black}}
\titleformat*{\subsection}{\normalfont\large\color{blue!80!black}}
\titleformat*{\subsubsection}{\normalfont\normalsize\color{blue!80!black}}
\numberwithin{equation}{section}
\theoremstyle{definition}
\newtheorem{dfn}{Definition}[section]
\newtheorem{example}{Example}[section]
\newtheorem{remark}{Remark}[section]
\theoremstyle{definition}
\newtheorem{prop}{Proposition}[section]
\newtheorem{thm}[prop]{Theorem}
\newtheorem{corollary}[prop]{Corollary}
\newtheorem{lem}[prop]{Lemma}
\newtheorem*{thm*}{Theorem}
\renewcommand{\fam}[1]{\mathsf{#1}}
\newcommand{\Z}{\mathbb{Z}}
\newcommand{\Q}{\mathbb{Q}}
\newcommand{\N}{\mathbb{N}}
\newcommand{\F}{\mathbb{F}}
\newcommand{\rrangle}{\,\rangle\!\!\!\rangle\,}
\newcommand{\llangle}{\langle\!\!\!\langle\,}
\newcommand{\Mod}{\mathsf{Mod}}
\newcommand{\azu}{\mathtt{azu}}
\newcommand{\ram}{\mathtt{ram}}
\newcommand{\branch}{\mathtt{branch}}
\newcommand{\Spec}{\mathrm{Spec}}
\newcommand{\Specm}{\mathrm{Specm}}
\newcommand{\Max}{\mathrm{Max}}
\newcommand{\eps}{\boldsymbol{e}}
\newcommand{\pp}{\mathfrak{p}}
\newcommand{\oo}{\mathfrak{o}}
\newcommand{\qq}{\mathfrak{q}}
\newcommand{\mm}{\mathfrak{m}}
\newcommand{\ff}{\boldsymbol{f}}
\newcommand{\eKW}{\mathbf{W}^{1/n}}
\newcommand{\eJac}{\mathcal{J}}
\newcommand{\W}{W}
\newcommand{\OO}{\mathcal{O}}
\renewcommand{\AA}{A}
\newcommand{\MM}{M}
\newcommand{\cent}{\mathfrak{Z}}
\newcommand{\Xscr}{\mathbcal{X}}
\newcommand{\xibf}{\boldsymbol{\xi}}
\newcommand{\Inf}{\mathcal{I\!\!\!\!I}}
\newcommand{\Ecal}{\mathcal{E}}
\DeclareMathOperator{\id}{id}
\DeclareMathOperator{\Ann}{Ann}
\DeclareMathOperator{\End}{End}
\DeclareMathOperator{\Hom}{Hom}
\DeclareMathOperator{\Ext}{Ext}
\begin{document}
\pretitle{\begin{flushleft}\LARGE \scshape
} 
\posttitle{\par\end{flushleft}
\rule[8mm]{\textwidth}{0.1mm}
}
\preauthor{\begin{flushleft}\Large \scshape
\vspace{-5mm}}
\postauthor{\end{flushleft}\vspace{-8mm}}                                                                                                                                                                                                                                                                                                                                                                                                                                                           
\title{Torsion points on Elliptic Curves and the Jackson Space}
\author{\large Daniel Larsson }

\date{}

\maketitle
\vspace{-0.5cm}
\begin{abstract}  
\noindent In this paper we will use a particular non-commutative scheme to, among other things, study the ramification properties of the field of $p$-torsion points on an elliptic curve and its reduction properties. Also, we show that this non-commutative space also allow us to use the $p$-torsion points to ``parametrise'' classes in the $p$-torsion of the Brauer group of the base field. 
\end{abstract}
\section{Introduction}

Let\blfootnote{\textit{e-mail:}  \href{mailto:daniel.larsson@usn.no}{ daniel.larsson@usn.no}}  $K$ be a local field with ring of integers $\oo_K$ and $E_{/K}$ an elliptic curve over $K$ with minimal regular model $\Ecal_{/\oo_K}$. Arithmetically it is interesting to study the specialisation (reduction) of $E$ to the special fibre of $\Ecal$. In this paper we will use a non-commutative deformation theory (see \cite{Laudal_Def} or \cite{EriksenLaudalSiqveland}) and non-commutative algebraic geometry to do exactly that. 

A natural question is what non-commutative deformation theory and non-commutative algebraic geometry can bring to the table in this context. As deformation theory is the local study of an object in the moduli space of all such objects, deformation theory can give information connected to the reduction of these objects to characteristic $p$. Extension to non-commutative base rings as versal deformation spaces, allows us to study the reduction properties of a family of objects \emph{at the same time}. In other words, how the objects deform (reduce) together as a family. This simultaneous deformations are measured by the size of certain $\Ext^1$-groups. In our case the family of objects are rational points on non-commutative spaces constructed by using $p$-torsion points on $E$. 

The starting point in the present approach is the following. Let $p$ be a prime. We will assume through most of the paper that $p\geq 3$, but this is not necessary at this point. Under the assumption that the $p$-torsion group $E[p]$ has a $K$-rational point, the field $K(E[p])$ generated by the coefficients of all $p$-torsion points is known to fit in the following tower of fields
$$\xymatrix{K\ar@{-}[r]&K(\zeta_p)\ar@{-}[r]&K(E[p]),}$$where $\zeta_p$ is primitive $p$-th root of unity. In fact, $K(E[p])/K(\zeta_p)$ is a Kummer extension (this is well-known but see \cite{Yasuda_Kummer} for one reference)
$$K(E[p])=K(\zeta_p)[\sqrt[p]{x}]=K(\zeta_p)[T]/(T^p-x), \qquad x\in K.$$Put $F:=K(E[p])$. Clearly, $F$ encodes interesting arithmetic properties associated with $E$ and its $p$-torsion points. From the field $F$ we will construct two non-commutative algebras $\eJac_x$ and $\Inf_{\!\xi}$ (the description of $\xi$ comes below), with associated non-commutative schemes, $\Xscr_{\eJac_x}$ and $\Xscr_{\Inf_{\!\xi}}$, respectively, using non-commutative deformation theory. Using these noncommutive algebras/schemes we will study the specialisation of $E$ to the special fibre. For simplicity we will, in this introduction denote either of $\Xscr_{\eJac_x}$ and $\Xscr_{\Inf_{\!\xi}}$, by $\Xscr$. In this paper we will use a ``soft'' approach to non-commutative algebraic geometry, more meant to be suggestive than actually essential in this particular instance. For a more comprehensive discussion see \cite{LarssonAritGeoLargeCentre} and the references therein. 

Let me indicate the construction of $\eJac_x$ since this is slightly more simple. This algebra is associated with the reduction modulo a prime $\ell\neq p$. The algebra $\Inf_{\!x}$ is used for the reduction modulo $p$ itself. 

Suppose $\oo_K\to B\to A$ is a general cyclic extension of an $\oo_K$-algebra $B$ and let $G$ be its automorphism group. Take an element $\sigma\in G$. The operator $\Delta:=a(\id-\sigma)$ for $a\in A$ acts on $A$ and satisfies the generalised Leibniz rule $\Delta(bc)=\Delta(b)c+\sigma(b)\Delta(c)$. Form the left $A$-module $A\cdot\Delta$. This can be endowed with a nonassociatve algebraic structure turning it into a generalised Lie algebra called a hom-Lie algebra (see section \ref{sec:twisted_derivations} for more details). Akin to the enveloping algebra of a Lie algebra, so can one construct an associative ``enveloping'' algebra of a hom-Lie algebra. The algebra $\eJac_x$ is then defined as the subalgebra of the ``enveloping'' algebra of the hom-Lie algebra $A\cdot\Delta$. In fact, $\eJac_x$ is a ``$q$-deformation'' of the enveloping algebra of $\mathfrak{sl}_2$ (see \cite{LaSi}).  

\begin{remark}
Notice that $A\cdot\Delta$ captures information of the group action of $G$ on $A$. This transfers to naturally to $\eJac_x$. 
\end{remark}

It turns out that $\eJac_x$ (and $\Inf_{\!\xi}$) satisfies some desirable ring-theoretic and homological properties. For instance, the algebra (or its associated non-commutative scheme) is noetherian, normal, Auslander-regular, and Cohen--Macaulay (CM). The definition of ``Auslander-regularity'' is not necessary here beyond it including a condition of finite global dimension. Hence this is indeed a generalisation of regularity in the commutative context. However, rings can be Auslander-regular and still behave ``singular'' in the non-commutative realm. Indeed, as we will see an Auslander-regular ring can have a singular centre. Similarly, a non-commutative ring can be CM, but have a centre which is not. On the other hand, the algebras we will consider here, will have CM centres. I refer to the literature for the definition of Auslander-regular and Cohen--Macaulay in the non-commutative case (a definition can be found in \cite{LarssonKummerWitt}, for instance).

The paper is organised as follows. Section \ref{sec:twisted_derivations} is preliminary and contains, for the reader's benefit, the constructions of algebras related to twisted derivations. In particular, it includes the construction of an ``infinitesimal enveloping algebra'', $\mathbf{I}_q$, that will crucially be used later. This section overlaps, by necessity, some discussions from \cite{LarssonKummerWitt}. However, there are some differences in presentation.  

Section \ref{sec:Kummer_Witt} introduces the so-called Kummer--Witt algebra $\eKW_x$. This is defined to be the ``enveloping algebra'' of the hom-Lie algebra $A\cdot \Delta$ where $A$ is a Kummer extension of $B$. This section also lists a number of ring-theoretic properties satisfied by $\eKW_x$ and $\mathbf{I}_q$.  

Section \ref{sec:nc_space} introduces the notion of non-commutative algebraic space that we will use. Included is also a definition of rational points on such a space. The section following section \ref{sec:nc_space} contains a recollection of polynomial identity algebras and the definition of $\eJac_x$ and a description of its centre and some of the ring-theoretic properties it enjoys. We also describe the non-commutative space $\Xscr_{\eJac_x}$ and its set of one-dimensional rational points together with their infinitesimal structure. 

In section \ref{sec:ram} we discuss ramification in cyclic extensions and its connection to $\eJac_x$. Then in section \ref{sec:torsionelliptic}, which is the main part of the paper, we consider the reduction properties of elliptic curves and relate these to properties of $\eJac_x$ and $\Inf_{\!\xi}$ and their associated spaces. In the case where $F=K(E[\ell])$ and $\ell\neq p$ we use $\eJac_x$. However, $\eJac_x$ does not give enough information in the case $\ell=p$ and this is where $\Inf_{\!x}$ comes in. Let $f$ be the conductor of $E$ and $d$ the number of irreducible components of the special fibre $\bar E$ which are defined over $k$ (the residue class field of $K$). We now take $\xi=\zeta_{f+d}$ and $\Inf_{\!\zeta_{f+d}}$. This choice might seem ad hoc at first glance but it turns out to be a very fruitful choice.  

The reduction properties of $E$ in the case where $\ell=p$ are studied via a family of one-dimensional rational points on $\Xscr_{\Inf_{\!\zeta_{f+d}}}$. We pay particular attention to additive reduction. It turns out that the only possible values for $p$ in the case where $K$ is absolutely unramified are $2, 3, 5$ and $7$. En route to the construction of the one-dimensional points alluded to above, we introduce a set of elements $\theta_i\in F=K(E[p])$. These are used in the last section to construct hyperplane arrangements in $\Xscr_{\Inf_{\!x}}$ and Brauer-classes in $\mathrm{Br}(K)$.  

It is not an understatement to say that the study of $K$-rational $p$-torsion points and their associated $p$-torsion fields $K(E[p])$ is an active field of study. For instance, the recent paper \cite{FreitasKraus} by Freitas and Kraus gives a complete description of the degree $[\Q_\ell(E[p]):\Q_\ell]$ when $\ell\neq p\geq 3$; in \cite{BandiniPaladino} Bandini and Paladino studies the generators of $K(E[p])$ where $K$ is any field of characteristic not equal to $2$ or $3$. The latter paper also indicate the complexity of the $p$-torsion field with several examples. It is my hope that the present paper could also be used to study the fields $K([p])$ as well as the $p$-torsion points themselves. 

\subsubsection*{Acknowledgements} 
Thanks to \'Alvaro Lozano-Robledo and Chris Wuthrich for answering questions about elliptic curves, their torsion points and reduction properties. Arnfinn Laudal deserves (as always) a special thanks for encouragement and all that.   
\subsection*{Notation}
We will adhere to the following notation throughout.
\begin{itemize}
	\item All associative rings are unital. Unless otherwise stated all rings $A, B, k$ e.t.c., are associative. 
		\item All modules are \emph{left} modules unless otherwise explicitly specified. The groupoid of (left) $A$-modules up to isomorphism is denoted $\Mod(A)$.
	\item The notation $\Max(A)$ denotes the \emph{set} of maximal ideals, while $\Specm(A)$ denotes the maximal spectrum of $A$, i.e., $\Max(A)$ together with the Zariski topology.
	\item $\cent(A)$ denotes the centre of $A$. 
	\item For $\pp$ a prime in $A$, $k(\pp)$ denotes the residue class field of $\pp$. 
\end{itemize}

\section{Algebras related to twisted derivations}\label{sec:twisted_derivations}
We should probably say a few things about the construction of the family of algebras $\eJac_x(r)$, otherwise what follows will feel completely pulled from the mist. Since the construction is spelled out in detail in \cite{LarssonKummerWitt} we will deliberately be as brief as possible. 

Throughout $\oo$ is a commutative ring and $A$ commutative $\oo$-algebra.  
\subsection{hom-Lie algebras}
Suppose then that $\sigma$ is a $\oo$-algebra endomorphism on $\AA$. Then a \emph{twisted $\oo$-derivation} (or \emph{$\sigma$-derivation}) on $\AA$ is an operator
$$\Delta(ab)=\Delta(a)b+\sigma(a)\Delta(b),\quad a,b\in \AA.$$ 
\begin{example}
The canonical example of a $\sigma$-derivation is a map $\AA\to\AA$ on the form
$$\Delta:=a(\id-\sigma):\,\,\AA\to\AA, \quad a\in\AA.$$In fact, for many algebras these types of maps are the only $\sigma$-derivations available. 
\end{example}
Under certain conditions (which are satisfied in our case) one can prove the following theorem (see \cite{HaLaSi}). 
\begin{thm}\label{thm:twistprod}The $A$-module $A\cdot \Delta$ can be endowed with a $\oo$-linear product 
$$\llangle a\cdot\Delta, b\cdot\Delta\rrangle:=\sigma(a)\cdot\Delta(b\cdot\Delta)-\sigma(b)\cdot\Delta(a\cdot\Delta),\quad a,b\in\AA$$satisfying
\begin{itemize}
\item[(i)] $\llangle a\cdot\Delta, b\cdot\Delta\rrangle=
  (\sigma(a)\Delta(b)-\sigma(b)\Delta(a))
  \cdot\Delta$;  
\item[(ii)] $\llangle a\cdot \Delta, a\cdot\Delta\rrangle=0$;
\item[(iii)] $\circlearrowleft_{a,b,c}\Big(\llangle \sigma(a)\cdot
  \Delta,\llangle
  b\cdot\Delta,c\cdot\Delta\rrangle\rrangle+ q\cdot\llangle
  a\cdot\Delta_,\llangle
  b\cdot\Delta,c\cdot\Delta\rrangle\rrangle\Big)=0$,
\end{itemize} where $a,b,c\in\AA$.
The product is in particular non-associative.
\end{thm}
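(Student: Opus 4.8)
The plan is to verify the three listed identities directly from the definition of the bracket $\llangle\,\cdot\,,\,\cdot\,\rrangle$, reducing everything to the Leibniz rule for $\Delta$ and the flatness of $A\cdot\Delta$ as a left $A$-module (so that $a\cdot\Delta = b\cdot\Delta$ forces $a=b$, which is what lets us read off a formula like (i)). First I would establish (i): expanding $\Delta(b\cdot\Delta)$ using the $\sigma$-twisted Leibniz rule gives $\Delta(b\cdot\Delta) = \Delta(b)\cdot\Delta + \sigma(b)\cdot\Delta^2$, where $\Delta^2 = \Delta\circ\Delta$ is the second iterate viewed as an operator. Substituting into $\sigma(a)\cdot\Delta(b\cdot\Delta) - \sigma(b)\cdot\Delta(a\cdot\Delta)$, the $\Delta^2$-terms are $\sigma(a)\sigma(b)\cdot\Delta^2 - \sigma(b)\sigma(a)\cdot\Delta^2$, which cancel because $A$ is commutative; what survives is exactly $(\sigma(a)\Delta(b) - \sigma(b)\Delta(a))\cdot\Delta$, giving (i). Here the key hypothesis (the ``certain conditions'' of the cited theorem \cite{HaLaSi}) is precisely that $A\cdot\Delta$ is a well-defined $A$-submodule on which this computation is legitimate — i.e. that the annihilator condition matching the kernel of $\Delta$ holds — so I would invoke that rather than reprove it.

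Item (ii) is then immediate from (i): setting $b = a$ yields $(\sigma(a)\Delta(a) - \sigma(a)\Delta(a))\cdot\Delta = 0$. For (iii), the twisted Jacobi identity, I would work entirely with the closed formula from (i). Write $\{a,b\} := \sigma(a)\Delta(b) - \sigma(b)\Delta(a)$, so that $\llangle a\cdot\Delta, b\cdot\Delta\rrangle = \{a,b\}\cdot\Delta$. Then $\llangle \sigma(a)\cdot\Delta, \llangle b\cdot\Delta, c\cdot\Delta\rrangle\rrangle = \{\sigma(a), \{b,c\}\}\cdot\Delta$ and similarly $\llangle a\cdot\Delta, \llangle b\cdot\Delta, c\cdot\Delta\rrangle\rrangle = \{a,\{b,c\}\}\cdot\Delta$. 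The claim reduces to the scalar identity
$$\circlearrowleft_{a,b,c}\Big(\{\sigma(a),\{b,c\}\} + q\{a,\{b,c\}\}\Big) = 0$$
in $A$. I would expand each $\{-,-\}$ and apply the Leibniz rule to the inner $\Delta\{b,c\}$-terms, so that every summand becomes a monomial in $\sigma$-images of $a,b,c$ multiplied against first and second iterates of $\Delta$ applied to single variables. Collecting by the ``shape'' of these monomials and summing over the cyclic permutation $\circlearrowleft_{a,b,c}$, the terms should cancel in groups; the coefficient $q$ is exactly what is needed to make the surviving second-order $\Delta^2$-contributions vanish against the reorganised first-order ones.

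The main obstacle is the bookkeeping in (iii): there are many terms once both applications of Leibniz are made, and the cancellation is not term-by-term but only after the cyclic sum is taken, so one must be disciplined about grouping monomials by their type (which variables appear under $\sigma$, which under a single $\Delta$, which under $\Delta^2$) and tracking signs. A clean way to organise this is to separate the sum into a ``$\Delta^2$-part'' and a ``$\Delta\otimes\Delta$-part'' and verify each vanishes under $\circlearrowleft_{a,b,c}$ on its own; commutativity of $A$ is used repeatedly to identify monomials that differ only by reordering of scalars. Finally, the last sentence — that the product is in general non-associative — requires no argument beyond exhibiting the failure of the Jacobi identity for $q \neq$ the Lie value, or simply noting that (iii) is a $q$-deformed Jacobi identity rather than the associativity axiom; I would remark that a Lie-type bracket satisfying a deformed Jacobi identity is non-associative as an algebra by construction, pointing to \cite{HaLaSi} for the explicit counterexample.
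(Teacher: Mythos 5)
Your handling of (i) and (ii) is fine: reading $\Delta(b\cdot\Delta)$ as the composition $\Delta\circ(b\cdot\Delta)$, the twisted Leibniz rule gives $\Delta(b)\cdot\Delta+\sigma(b)\cdot\Delta^{2}$, commutativity of $A$ kills the $\Delta^{2}$-parts, and the closed formula (i) follows, with (ii) immediate. Two caveats even here: what you call ``flatness'' is not the relevant condition (flatness does not force $a\cdot\Delta=b\cdot\Delta\Rightarrow a=b$); what is actually needed for the bracket to be well defined on $A\cdot\Delta$ is the annihilator condition $\sigma(\Ann(\Delta))\subseteq\Ann(\Delta)$ of \cite{HaLaSi}. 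Also note that the paper supplies no proof of this theorem at all --- it is quoted from \cite{HaLaSi} --- so there is no internal argument to match; the question is only whether your verification plan is sound.

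The genuine gap is in (iii). The deformed Jacobi identity is not a formal consequence of formula (i): it holds only under an additional hypothesis --- one of the ``certain conditions'' the paper alludes to --- namely a commutation relation of the form $\Delta\circ\sigma=q\,(\sigma\circ\Delta)$ for the \emph{same} element $q$ that appears in (iii). (In the situations used in the paper this holds because $\Delta=a(\id-\sigma)$ with $\sigma$ acting by roots of unity, which forces such a relation with an explicit $q$.) Your sketch never states this relation, and identifies the ``certain conditions'' only with well-definedness of the module structure; but with $\sigma$, $\Delta$ and $q$ unconstrained the cyclic sum in (iii) simply does not vanish, so the assertion that ``the coefficient $q$ is exactly what is needed'' has no content --- nothing in your setup ties $q$ to anything. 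Moreover the cancellation pattern you predict is not what occurs: expanding $\circlearrowleft_{a,b,c}\big(\{\sigma(a),\{b,c\}\}+q\{a,\{b,c\}\}\big)$ with Leibniz, the $\Delta^{2}$-contributions of the unweighted summand already cancel among themselves under the cyclic sum (their coefficients $\sigma^{2}(a)\sigma^{2}(b)$ are symmetric), while the decisive cancellations pair the $q$-weighted first-order terms with the terms containing $\Delta(\sigma(\cdot))$, and these match only after substituting $\Delta\sigma=q\,\sigma\Delta$. In other words, the step you defer (``the terms should cancel in groups'') is exactly the nontrivial content of (iii), and it cannot be completed without importing and using the commutation hypothesis. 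To repair the argument, either impose $\Delta\circ\sigma=q\,(\sigma\circ\Delta)$ (verifying it for $\Delta=a(\id-\sigma)$ in the paper's setting) and carry the bookkeeping through, or do what the paper does and invoke \cite{HaLaSi}, where the computation is performed under precisely these hypotheses.
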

\begin{dfn}\label{dfn:globhom}A \emph{hom-Lie
  algebra} is an $\AA$-module $\MM$ together
  with a $\oo$-bilinear product $\llangle\,\cdot,\cdot\,\rrangle$ on
  $\MM$ such that 
  \begin{description} 
       \item[(hL1.)] $\llangle a,a\rrangle =0$, for all $a\in \MM$;
       \item[(hL2.)] there is a $q\in \AA$ such that the identity $$\circlearrowleft_{a,b,c}\Big \{\llangle \sigma(a),\llangle
       b,c\rrangle\rrangle+q\cdot\llangle a,\llangle
       b,c\rrangle\rrangle\Big\}=0,$$ holds for all $a, b, c\in\MM$.
  \end{description}
\end{dfn}

If we let $\sigma$ vary in a group $G$, we get a collection of hom-Lie algebras, called an \emph{equivariant hom-Lie algebra}. Notice that an equivariant hom-Lie algebra includes a Lie algebra (which might certainly be abelian) since this is what we get when $\sigma$ is the unit element in $G$. We can thus view the equivariant structure as a deformation of a Lie algebra, parametrised by the elements in $G$. 

\begin{example}The obvious example is the left $\AA$-module $\AA\cdot \Delta$ with product defined by theorem \ref{thm:twistprod}. 
\end{example}

Some reasons why $\sigma$-derivations are important (and actually prevalent in abundance) in arithmetic and geometry, can be found in \cite{LarArithom} and the references therein. In short, $\sigma$-derivations and their associated hom-Lie algebras encodes the Galois structure (or more generally, the automorphism-structure) of the underlying algebra. 

\subsection{Infinitesimal hom-Lie algebras}\label{sec:infinitesimal_kummer}
We give a brief illustration of the above constructions that will be of some importance later. 

Let $\oo$ be a commutative algebra and put $\oo^!:=\oo[t]/t^n$, for some $n\geq 2$. Then $\oo^!$ is an \emph{infinitesimal thickening} of $\oo$. Put $\Delta:=a(\id-\sigma)$, where $a\in \oo$ and where $\sigma(t)=qt$, for some $q\in\oo^\times$. Then $(\oo^!\cdot \Delta, \,\llangle\,,\,\rrangle)$ is \emph{infinitesimal hom-Lie algebra on $\oo^!$}. The elements $\eps_i:=t^i\Delta$, $0\leq i\leq n-1$, forms a basis for $\oo^!\cdot\Delta$ as an $\oo^!$-module.

Now, put $\Delta:=a(\id-\sigma).$ Then $\Delta(t^i)=a(1-q^i)t^i$. Using Theorem \ref{thm:twistprod} (i) a small computation gives that 
$$\llangle \eps_i,\eps_j\rrangle=a(q^i-q^j)\eps_{i+j}.$$Observe that when $i+j\geq n$, then $\llangle \eps_i,\eps_j\rrangle=0$ since in that case $t^{i+j}=0$. In addition, $\llangle\eps_0,\eps_i\rrangle = a(1-q^i)\eps_i$, for all $i$. 
\begin{example}\label{exam:inf_n=3}When $n=3$, we get the $(a, q)$-parametric family of solvable $\oo$-Lie algebras:
$$\llangle \eps_0,\eps_1\rrangle =a(1-q)\eps_1,\qquad \llangle \eps_0,\eps_2\rrangle =a(1-q^2)\eps_2,\qquad \llangle \eps_1,\eps_2\rrangle=0.$$ One would be tempted to conjecture that infinitesimal hom-Lie algebras are Lie algebras for all $n$. However, this is not true in general.
\end{example}

Associated to $(\oo^!\cdot \Delta, \,\llangle\,,\,\rrangle)$ is an associative ``enveloping algebra'', defined by 

$$\mathbf{I}_q:=\frac{\oo\{\eps_0,\eps_1,\dots,\eps_{n-1}\}}{\Big(\eps_i\eps_j-q^{j-i}\eps_j\eps_i-(1-q^{j-i})\eps_{i+j}\Big)}.$$
See \cite{LarssonKummerWitt} for its construction.
\begin{example}For the algebra in example \ref{exam:inf_n=3} we find (after change of basis $\eps_0\mapsto \eps_0+1$) we get the algebra
\begin{equation}\label{eq:Aq3_inf}
 \mathbf{Q}^3_{\oo, q}:=\frac{\oo\{\eps_0,\eps_1,\eps_2\}}{\Big(
 \eps_0\eps_1-q\eps_1\eps_0,\,\,
  \eps_{0}\eps_2-q^2\eps_2\eps_{0},\,\,
    \eps_{1}\eps_2-q\eps_2\eps_{1}
\Big)}.
\end{equation}The space $\Mod\big(\mathbf{Q}^3_{\oo,q}\big)$ is a \emph{quantum affine three-space}. 
\end{example}

\subsection{Kummer--Witt hom-Lie algebras and their ``enveloping algebras''}\label{sec:Kummer_Witt}
 We keep the notation from above and further denote the algebra structure on $\AA$ by 
$$y_iy_j=\sum_{k=0}^n a^k_{ij}y_k, \quad a_{ij}^k\in \oo,$$ where the $y_i$ are the algebra generators of $\AA$. Let $\sigma$ be the $\oo$-linear algebra morphism on $\AA$ defined by $\sigma(y_i)=q_iy_i$, $q_i\in \oo^\times$ and let $\Delta=a(\id-\sigma)$ be the $\sigma$-derivation from the previous section. Put $\eps_i:=y_i\cdot\Delta$. Then, from Theorem \ref{thm:twistprod}, the pair 
\begin{equation}\label{eq:Witthom}
\W^\sigma_\AA:=(\AA\cdot \Delta, \llangle\,,\,\rrangle), \qquad \llangle \eps_i,\eps_j\rrangle=a\sum_{k=0}^n(q_i-q_j)a^k_{ij}\eps_k
\end{equation} defines a hom-Lie algebra structure on $\AA\cdot\Delta$. We call $\W^\sigma_\AA$ the \emph{Witt hom-Lie algebra} over $X$ attached to $\AA$ and $\sigma$. 
\begin{remark}The construction just given is obviously not dependent on the particular choice $\sigma(y_i)=q_iy_i$. Any other automorphism can be used. However, the result will, of course, be more complicated and harder to write out.
\end{remark}

From now on we assume that the $n$-th roots of unity are included in $\oo$. Fix a primitive such root $\zeta=\zeta_n$ and let $\AA$ be the Kummer extension
$$\AA=\oo[\sqrt[n]{x}]=\oo[t]/(t^n-x), \quad x\in \oo.$$ Fix an $r\in\Z$ and let $\sigma$ be the automorphism $\sigma(t)=\zeta^r t$.

The hom-Lie algebra of $\sigma$-twisted derivations on $A$ is called the \emph{Kummer--Witt hom-Lie algebra of level $r$} (see \cite{LarssonKummerWitt}) and its enveloping algebra is the \emph{Kummer--Witt algebra of level $r$}:
\begin{equation}\label{eq:kum_def} 
\eKW_x(r)=\frac{\oo\langle\eps_0,\eps_1,\dots,\eps_{n-1}\rangle}{\left(\eps_i\eps_j-\zeta^{r(j-i)}\eps_j\eps_i-(1-\zeta^{r(j-i)})x^\circlearrowright\,\,\eps_{\{i+j\,\,(\mathrm{mod} n)\}}\right)}.
\end{equation}The notation $x^\circlearrowright$ means that $x$ is included when $i+j\geq n$.

\begin{remark}The same game can clearly be played with Artin--Schreier extensions. We invite the reader to write out the corresponding relations for him/herself. 
\end{remark}
\begin{remark}\label{rem:ramification}
The above construction will be used with the following remark.  The element $x\in \oo$ is the \emph{$A$-ramification divisor}. This element determines a canonical subscheme in the ramification locus of a non-commutative space attached to $A$. The element $x$ is a ramification invariant in two senses: (1) as the divisor in $\oo$ over which $A$ is ramified (i.e., $\pp\mid x\Rightarrow \pp$ ramified); and (2) as an element giving a subscheme of the ramification locus in a certain non-commutative space (which we will construct later). 
\end{remark}

\subsubsection{Fibres of $\eKW_x(r)$}
Let $\pp\in\Spec(\oo)$ be a prime. Observe that the reduction of $\zeta$ modulo $\pp$ (i.e., the image of $\zeta$ in $k(\pp)$), $\bar\zeta$, is non-zero, since $\zeta\in \oo^\times$. Then 
\begin{equation}\label{eq:reductionKW}
\eKW_x(r)_{/\pp}=\frac{k(\pp)\langle\eps_0,\eps_1,\dots,\eps_{n-1}\rangle}{\left(\eps_i\eps_j-\bar\zeta^{r(j-i)}\eps_j\eps_i-(1-\bar\zeta^{r(j-i)})x^\circlearrowright\,\,\eps_{\{i+j\,\,(\mathrm{mod} n)\}}\right)}.
\end{equation}
We record the following for easy reference. 
\begin{prop}\label{prop:reductionKW}
There are the following three possibilities when reducing modulo a prime $\pp$:
\begin{itemize}
	\item[(1)] $\bar\zeta=1$, in which case we get
	$$\eKW_x(r)_{/\pp}=\frac{k(\pp)\langle\eps_0,\eps_1,\dots,\eps_{n-1}\rangle}{\left(\eps_i\eps_j-\eps_j\eps_i\right)}=k(\pp)[\eps_0,\eps_1,\dots,\eps_{n-1}],$$the commutative polynomial algebra;
	\item[(2)] $\bar x =0$, in which case we get
		$$\eKW_x(r)_{/\pp}=\frac{k(\pp)\langle\eps_0,\eps_1,\dots,\eps_{n-1}\rangle}{\left(\eps_i\eps_j-\bar\zeta^{r(j-i)}\eps_j\eps_i\right)},$$a \emph{quantum affine space};
	\item[(3)] and the \emph{generic case} (\ref{eq:reductionKW}) with relations unchanged.
\end{itemize}
It is important to note that in all three cases, the reduced algebra is a domain. 
\end{prop}
\begin{proof}
Obvious. 
\end{proof}

Some ring-theoretic properties of $\eKW_x(r)$ are summarised in the following theorem. Here will use notions that will not be defined in this paper. For more details see \cite{LarssonKummerWitt}. It is not essential to understand all words in the theorem in order to appreciate the rest of the paper. 

\begin{thm}\label{thm:KW}The algebra $\eKW_x(r)$ satisfies the following:
\begin{itemize}
	\item[(i)] it is an Auslander-regular, noetherian PI-domain, finite over its centre;
	\item[(ii)] $$\mathrm{Kdim}(\eKW_x(r))=\mathrm{gl.dim}(\eKW_x(r))= n+\mathrm{Kdim}(\oo),$$ where $\mathrm{Kdim}$ is the Krull dimension and $\mathrm{gl.dim}$ the global dimension;
	\item[(iii)] it is fibre-wise Cohen--Macaulay with $$\mathrm{GKdim}\big(\eKW_x(r)_{/\pp}\big)=\mathrm{tr.deg}\big(\eKW_x(r)_{/\pp}\big)=n,$$where $\mathrm{GKdim}$ denotes the Gelfand--Kirillov dimension;
	\item[(iv)] it is fibre-wise a maximal order in its fibre-wise division rings of fractions;
	\end{itemize}
\end{thm}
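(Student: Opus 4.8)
The plan is to reduce everything to a single normal-form / filtration argument, since the relations defining $\eKW_x(r)$ in \eqref{eq:kum_def} are of ``solvable polynomial'' / $G$-algebra type: each generator relation $\eps_i\eps_j = \zeta^{r(j-i)}\eps_j\eps_i + (1-\zeta^{r(j-i)})x^{\circlearrowright}\eps_{\{i+j\}}$ rewrites a product $\eps_i\eps_j$ with $i>j$ in terms of the ``correctly ordered'' monomial plus a lower term. First I would fix the degree-lexicographic filtration on the free algebra $\oo\langle\eps_0,\dots,\eps_{n-1}\rangle$ and check that the overlap ambiguities $\eps_i\eps_j\eps_k$ ($i>j>k$) resolve (Diamond Lemma / Bergman), so that the ordered monomials $\eps_0^{a_0}\cdots\eps_{n-1}^{a_{n-1}}$ form an $\oo$-basis of $\eKW_x(r)$. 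This is the computational core and is already carried out in \cite{LarssonKummerWitt}; I would simply invoke it. Granting the PBW-type basis, $\eKW_x(r)$ is a free $\oo$-module, hence flat over $\oo$, which is what makes the fibre-wise statements (iii), (iv) meaningful and reduces them to the case $\oo = k(\pp)$ a field.

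For (i): the associated graded ring (with respect to the word-length filtration, the ``commutative'' part of the relations having the nontrivial leading term) is a quantum affine space $k(\pp)[\eps_0,\dots,\eps_{n-1}]_{\bar\zeta}$ over $\oo$ — or rather, one should take the filtration whose $\gr$ is the quantum polynomial algebra $\OO_q(\A^n)\otimes_{\oo}\oo$. That ring is a noetherian Auslander-regular domain (standard, e.g. via iterated Ore extensions $\oo[\eps_0][\eps_1;\tau_1]\cdots[\eps_{n-1};\tau_{n-1}]$, each $\tau_i$ an automorphism so no derivation term survives in the graded picture), and all of noetherianity, being a domain, and Auslander-regularity lift from $\gr R$ to $R$ by the standard filtered–graded transfer lemmas. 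Finiteness over the centre is the PI part: since $\bar\zeta$ (resp.\ $\zeta$) has finite order dividing $n$, suitable powers $\eps_i^{N}$ and symmetric expressions are central, so $\eKW_x(r)$ is module-finite over $\oo[\eps_i^{N},\dots]$; this also gives that it is a PI ring of PI-degree bounded by the order of $\zeta$.

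For (ii): global dimension equals the global dimension of the iterated Ore extension, which adds $1$ per variable to $\mathrm{gl.dim}(\oo)$, giving $n + \mathrm{Kdim}(\oo)$; and for Auslander-regular noetherian rings finite over a nice centre, $\mathrm{Kdim} = \mathrm{gl.dim}$, so the two coincide. For (iii): over a field $k(\pp)$, the fibre is (by Proposition \ref{prop:reductionKW}) either a commutative polynomial ring, a quantum affine $n$-space, or the generic algebra, all of which are $\A^n$-like: they have a filtration with $\gr$ equal to $k(\pp)[\eps_0,\dots,\eps_{n-1}]$ up to $q$-twist, hence $\mathrm{GKdim} = n$, they are domains, and Cohen–Macaulayness (in the sense $j(M) + \mathrm{GKdim}(M) = n$ for all finitely generated $M$) transfers from the graded commutative/quantum case, where it is classical. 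The transcendence degree of the fraction skew field equals $\mathrm{GKdim} = n$ by Gelfand–Kirillov for this class of algebras. For (iv): a noetherian, Auslander-regular (hence integrally closed in a suitable sense), PI domain that is ``homologically homogeneous'' over a regular centre is a maximal order — I would cite the maximal-order criterion for Auslander-regular CM PI algebras (as in the references), applied fibre-wise.

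The main obstacle I anticipate is (iv), the maximal-order claim: Auslander-regularity plus CM does not automatically give maximal order, one genuinely needs the centre to be integrally closed and the module to be reflexive/``homologically homogeneous'' on the regular locus, so the real work is verifying that the centre of each fibre is a normal domain and that $\eKW_x(r)_{/\pp}$ is reflexive over it — equivalently invoking the $\tau$-structure / van den Bergh–type criterion. Everything else (i)–(iii) is a routine filtered–Ore-extension transfer once the PBW basis of \cite{LarssonKummerWitt} is in hand.
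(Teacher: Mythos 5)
The paper does not actually prove Theorem \ref{thm:KW}: it is imported wholesale from \cite{LarssonKummerWitt} (``for more details see \cite{LarssonKummerWitt}''), so there is no internal argument to measure your sketch against. Your route --- deglex PBW basis via the Diamond Lemma, associated graded equal to a quantum affine $n$-space over $\oo$, filtered--graded transfer of noetherianity, domain, Auslander-regularity, then fibre-wise GK-dimension and CM, and a maximal-order criterion for (iv) --- is the standard strategy and very plausibly parallels what the cited paper does; like the paper, you also defer the computational core (resolution of the overlap ambiguities) to \cite{LarssonKummerWitt}.

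That said, two steps as you wrote them have genuine gaps. First, the count in (ii) ``global dimension equals the global dimension of the iterated Ore extension'' is not available for $\eKW_x(r)$ itself: the relation for $\eps_i\eps_j$ produces $\eps_{\{i+j\ (\mathrm{mod}\ n)\}}$, whose index can exceed both $i$ and $j$, so no ordering of the generators presents $\eKW_x(r)$ as successively adjoined variables whose relations involve only earlier ones --- the paper claims an Ore presentation only for the three-generator subalgebra $\eJac_x(r)$. The extra term is lower only in total degree, so $\mathrm{gl.dim}$ and $\mathrm{Kdim}$ must be extracted from the filtered--graded comparison (and note the asserted formula silently uses $\mathrm{gl.dim}(\oo)=\mathrm{Kdim}(\oo)$, i.e.\ regularity of $\oo$, which you should state). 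Second, finiteness over the centre is not automatic from $\zeta$ having finite order: because of the lower-order terms, e.g.\ $\eps_0\eps_j=\zeta^{rj}\eps_j\eps_0+(1-\zeta^{rj})\eps_j$, the naive candidates $\eps_i^{N}$ are in general \emph{not} central (one needs shifted/normalised elements, as the explicit centre computations for $\eJac_x$ in the paper indicate), so exhibiting a central subalgebra over which $\eKW_x(r)$ is module-finite is real work of the same kind you deferred for the PBW basis, not a formal consequence. Finally, as you yourself flag, (iv) requires normality of the fibre-wise centre and a reflexivity/homological-homogeneity criterion; Auslander-regular plus CM alone does not give a maximal order, and nothing else in your sketch supplies that input --- here too the honest proof is the one in \cite{LarssonKummerWitt}.
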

\begin{remark}The Auslander-regularity is a natural generalisations of regularity for non-commutative algebras. The same applies to Cohen--Macaulay-ness.  
\end{remark}

\section{Non-commutative algebraic spaces}\label{sec:nc_space}
In this section we will very briefly introduce the concept of non-commutative algebraic spaces in the sense of \cite{LarssonAritGeoLargeCentre}. We refer to this paper for all details. 

Let $k$ be a field, $A$ be a $k$-algebra and $\fam{M}:=\{M_1, M_2, \dots, M_s\}$ be a finite family of finite-dimensional (over $k$) $A$-modules with structure morphisms $\xi_i$. Then there is a non-commutative ring $\hat\OO_\fam{M}$ and a map $\boldsymbol{\xi}:A\to \hat\OO_\fam{M}$, encoding the \emph{simultaneous} deformations of the modules $M_i$. In fact, $\boldsymbol{\xi}$ is a versal family of $\fam{M}$. 

The ring object $\hat\OO_\fam{M}$ is constructed as follows (details can be found in \cite{EriksenLaudalSiqveland} or \cite{LarssonAritGeoLargeCentre}). Associated to $\fam{M}$ is a deformation functor having a matric pro-representing hull $(\hat H_{ij})$. The diagonal consists of quotients of non-commutative formal power series rings, and the entries off-diagonal are bimodules over the diagonal. Then $\OO_\fam{M}$ is defined as the matrix ring
$$\hat\OO_{\fam{M}}:=\big(\Hom_k(M_i, M_j)\otimes_k \hat H_{ij}\big).$$  The ring $\hat\OO_\fam{M}$ is to be thought of as the completion of the local ring at $\fam{M}$ in the moduli space $\Mod(A)$ of $A$-modules up to isomorphism. In all cases that I'm aware of this has a natural algebraisation $\OO_\fam{M}$, which should be viewed as the local ring itself. 

Taking the projective limit over all finite families $\fam{M}$, we define a ring object $\boldsymbol{\OO}$ on $\Mod(A)$. 
\begin{dfn}
 The \emph{non-commutative algebraic space} of $A$, is the pair
$$\Xscr_A:=\big(\Mod(A), \boldsymbol{\OO}\big),$$ where $\boldsymbol{\OO}$ is the \emph{structure object} of $\Xscr_A$. The \emph{tangent space} of $\Xscr_A$ at $\fam{M}$ is the matrix $T_\fam{M}:=(\Ext_A^1(M_i, M_j)_{ij})$. We say that $\Xscr_A$ has property $\mathbf{P}$ if $\Spec(\cent(A))$ has this property. 
\end{dfn}

\begin{remark}\begin{itemize}
	\item[(i)] Note that $\boldsymbol{\OO}$ is not a sheaf.
	\item[(ii)] Since $\OO$ is constructed by deformation theory, the structure object is only defined fibre-by-fibre over $\oo$. The construction \emph{can} be made in a formal sense without referring to any special type of base but this is not very useful in practice.  In fact:
	\item[(iii)] The ring $(\hat H_{ij})$ is constructed in the same way as in the commutative case (although it is more complicated): successively lifting the tangent structure (defined over a field) and killing obstructions to further liftings. 
\end{itemize}
\end{remark} 

The following is definition 2.8 in \cite{LarssonAritGeoLargeCentre}.
\begin{dfn}Let $k$ be a field and let $A$ and $S$ be $k$-algebras.
\begin{itemize}
	\item[(a)] An \emph{\'etale $S$-rational point} on $\Xscr_A$ is an algebra morphism
$$\xibf: A\to S$$ such that 
$$A/\ker\xibf=A/\mm_1\times A/\mm_2\times\cdots\times A/\mm_s$$ is a direct product of prime rings. If the $A/\mm_i$ are artinian, being prime is equivalent to being simple so the $\mm_i$ are maximal ideals. This applies in particular to the case when $S$ is artinian. 
	\item[(b)] The \emph{underlying \'etale point} of $\xibf$ is the set of algebras $\bar\xi_i:=A/\mm_i$. 
	\item[(c)] If all $\bar\xi_i$ are simple, the point is \emph{closed}, otherwise it is \emph{non-closed}. The point is \emph{open} if all the $\bar\xi_i$ are non-simple. 
	\item[(d)] If $s=1$, the map $\xibf$ is an \emph{$S$-rational point}, which we write in non-boldface: $\xi$; the algebra $\bar\xi=A/\mm$ is then the (unique) underlying point of $\xi$.  
	\item[(e)] If $S=\End_L(M)$ for some field $L$ and $M$ finite-dimensional over $L$, we say that $\xi$ is an \emph{$L$-rational point}.
	\item[(f)] $\xibf$ is a \emph{geometric \'etale point} if it is closed and $\cent(\bar\xi_i)=k^\mathrm{al}$ for all $i$.  
\end{itemize}
As usual we denote the $S$-rational points on $\Xscr=\underline{\Mod}(A)$ as $$\Xscr(S)=\underline{\Mod}(A)(S).$$ 
\end{dfn}

\section{The Jackson space}\label{sec:Jackson_space}

\subsection{Polynomial identity algebras}
Let $R$ be a commutative ring and let $ \Z\langle \mathbf{x}\rangle = \Z\langle x_1, x_2, \dots, x_n\rangle$ be the free $\Z$-algebra on $n$ generators. An $R$-algebra $A$ is a \emph{polynomial identity algebra} (\emph{PI-algebra}) if there is an $n$ and a $P(\mathbf{x})\in\Z\langle \mathbf{x}\rangle$ such that $P(a_1, a_2, \dots, a_n)=0$ for all $n$-tuples $(a_1, a_2, \dots, a_n)\in A^n$. 

Examples abound: all commutative algebras, Azumaya algebras (and so matrix algebras and central simple algebras) and algebras that are finite modules over their centres, are arguably the most important ones. 

Let $A$ be a PI-algebra with centre $\cent(A)$. Then every ideal in $A$ intersects $\cent(A)$ non-trivially. Furthermore, there is a dense open subscheme $\azu(A)\subset\Specm(\cent(A))$ such that for all $\mm\in\azu(A)$ the extension of $\mm$ to $A$ is also a maximal ideal. The subscheme $\azu(A)$ is called the \emph{Azumaya locus} of $A$. The complement, $$\ram(A):=\Specm(\cent(A))\setminus \azu(A),$$ is the support of a Cartier divisor and is called the \emph{ramification locus}. We will be a bit sloppy and refer to $\azu(A)$ and $\ram(A)$ both as subsets of $\cent(A)$ and as subschemes of $\Spec(A)$. 

We will routinely identify an $A$-module $M$ with its annihilator $\mathfrak{a}:=\Ann_A(M)$. If $M$ is a simple $A$-module, its annihilator is a primitive ideal. In the case of PI-algebras, an ideal being primitive is equivalent to it being maximal so we identify simple modules and maximal ideals via this correspondence.  

The following theorem is central in the theory of algebras finite over their centres. 
\begin{thm}[M\"uller's theorem]
Let $A$ be an affine PI-algebra over a field $k$ and let $M$ and $N$ be simple finite-dimensional $A$-modules. Then 
$$\Ext^1_A(M,N)\neq \emptyset\quad \iff\quad \Ann(M)\cap \cent(A)=\Ann(N)\cap \cent(A).$$(The annihilators are \emph{left} ideals.) The statement can be rephrased as the equivalence 
$$\Ext^1_A(M,N)\neq \emptyset\quad \iff\quad \Ann(M)\cap \cent(A)=\Ann(N)\cap \cent(A)\in\ram(A),$$since $\Ann(M)$, $\Ann(N)$ are maximal, and so also their intersections with the centre.
\end{thm}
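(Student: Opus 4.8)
The plan is to prove the two implications separately, reducing the harder one to a structural fact about cliques in PI rings. Throughout write $Z:=\cent(A)$ and, for a simple $A$-module $L$, put $\mm_L:=\Ann(L)\cap Z$; I would treat the non-degenerate case $M\not\cong N$, the diagonal case being a separate (and, for the literal ``$\Ext^1\neq0$'' formulation, slightly degenerate) matter that amounts to asking whether $\Ann(M)$ is self-linked. First I would record that $\mm_L$ is a maximal ideal of $Z$: since $A$ is affine PI, $A/\Ann(L)$ is a primitive affine PI-algebra, hence simple Artinian and finite-dimensional over its centre, which is a field finite over $k$ (Kaplansky, plus the Artin--Tate lemma and Zariski's lemma); as $Z/\mm_L$ embeds in that field it is itself a field. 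This also shows that the ``$\ram(A)$'' version is only a repackaging: if $M\not\cong N$ have the same central contraction $\mm$, there are at least two simple $A$-modules over $\mm$, so $\mm A$ is not maximal in $A$ and hence $\mm\notin\azu(A)$, i.e.\ $\mm\in\ram(A)$.

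For the implication $\Ext^1_A(M,N)\neq 0\Rightarrow\mm_M=\mm_N$ I would take a non-split extension $0\to N\to E\to M\to 0$. Since $Z$ is central, $\mm_M$ kills $E/N\cong M$ so $\mm_M E\subseteq N$, while $\mm_N N=0$; hence $\mm_M\mm_N$ annihilates $E$, and $E$ is a module over $Z/\mm_M\mm_N$. If $\mm_M\neq\mm_N$, these are comaximal, so by the Chinese Remainder Theorem $Z/\mm_M\mm_N\cong Z/\mm_M\times Z/\mm_N$ and $E$ is the direct sum of its $\mm_M$-primary and $\mm_N$-primary parts — a decomposition \emph{of $A$-modules}, again because $Z$ is central. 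Matching $N$ (which is $\mm_N$-primary) and $M\cong E/N$ (which is $\mm_M$-primary) against this decomposition forces the $\mm_N$-primary summand to equal $N$, so the sequence splits, a contradiction. Hence $\mm_M=\mm_N$.

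For the converse, put $\mm:=\mm_M=\mm_N$. Since $M$ and $N$ are killed by $\mm$ and so is $\Ext^1_A(M,N)$, flat base change along $Z\to Z_\mm$ and then $\mm$-adic completion give
\[
\Ext^1_A(M,N)\;\cong\;\Ext^1_{A_\mm}(M,N)\;\cong\;\Ext^1_{\widehat A_\mm}(M,N),
\]
with $A_\mm=A\otimes_Z Z_\mm$ and $\widehat A_\mm=A\otimes_Z\widehat Z_\mm$. Here I use that $A$ is module-finite over $Z$ (true for all the algebras considered in this paper; in general one invokes the fully bounded Noetherian version of the theorem), so that $\widehat A_\mm$ is a complete semilocal Noetherian ring, module-finite over the complete Noetherian local ring $\widehat Z_\mm$. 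Formation of the centre commutes with the flat base change $Z\to\widehat Z_\mm$ for the finitely presented $Z$-algebra $A$, so $\cent(\widehat A_\mm)=\widehat Z_\mm$; being local it has no non-trivial idempotents, so $\widehat A_\mm$ is indecomposable as a ring. As a complete semilocal Noetherian ring it is semiperfect, its finitely many simple modules are precisely the simple $A$-modules over $\mm$ (so $M$ and $N$ are among them), and ring-indecomposability is equivalent to connectedness of its $\Ext$-quiver on this set of simples. The final step is to pass from connectedness to $\Ext^1_{\widehat A_\mm}(M,N)\neq 0$ itself: this is where the PI (second-layer) structure enters, through Jategaonkar's Main Lemma — a non-split extension of one simple module by another produces a link between the corresponding maximal ideals, and links preserve the central contraction — together with the substantive content of M\"uller's theorem, namely that within a single clique, i.e.\ the fibre over $\mm$, the linkage graph of maximal ideals is not merely connected but \emph{complete}. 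Hence $\Ann(M)$ and $\Ann(N)$ are directly linked, which yields a non-split extension and so $\Ext^1_{\widehat A_\mm}(M,N)\neq0$.

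I expect this last step to be the main obstacle. Everything preceding it — maximality of the central contractions, the primary decomposition of $E$, the flat-base-change reduction, the indecomposability of $\widehat A_\mm$ — is formal; the genuine work is in upgrading ``the fibre over $\mm$ is linkage-connected'' to ``the linkage graph on the fibre is complete''. A self-contained treatment would go through Jategaonkar's patching of links, exploiting finiteness over the centre and the structure of the ramification divisor; for the concrete algebras of this paper one can instead compute the minimal projective resolutions of the simple modules of $\widehat A_\mm$ directly — as for a hereditary order, where the central parameter supplies precisely the extensions that are invisible in the special fibre $\widehat A_\mm/\mm\widehat A_\mm$.
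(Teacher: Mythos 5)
The paper gives no proof of this statement: it is quoted as M\"uller's theorem and used as a known input, so the only meaningful comparison is with the classical result itself. Your first direction is correct (and standard): the maximality of $\Ann(M)\cap\cent(A)$ via Kaplansky plus Artin--Tate, and the splitting of a putative non-split extension via the Chinese Remainder Theorem when the two central contractions are comaximal, are both fine; one can even shortcut the CRT step by noting that $\Ext^1_A(M,N)$ is a $\cent(A)$-module killed by both contractions.

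The genuine gap is in the converse, and you have in effect acknowledged it. After the (correct) reduction to the completion $\widehat A_\mm$, indecomposability of $\widehat A_\mm$ only gives that the $\Ext$-quiver on the finitely many simples over $\mm$ is \emph{connected}; that yields a chain $M=S_0,S_1,\dots,S_t=N$ with a nonzero $\Ext^1$ in some direction between consecutive terms, not $\Ext^1_A(M,N)\neq 0$ for the given pair. To bridge this you invoke ``the substantive content of M\"uller's theorem, namely that the linkage graph on the fibre over $\mm$ is complete'' --- but that is precisely the assertion to be proved, so the argument is circular at its crucial point; the classical theorem of M\"uller gives strictly less (the fibre over a central maximal ideal is a single clique, i.e.\ link-\emph{connected}). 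Moreover the missing step is not merely technical bookkeeping: for a general affine PI algebra, even one module-finite over its centre, the ``complete link graph on each fibre'' statement can fail outright --- the upper triangular algebra $T_3(k)$ has centre $k$, so all three simples have the same central contraction, yet $\Ext^1(S_1,S_3)=0$ --- so closing the gap necessarily uses hypotheses beyond those you put into play (primeness, maximal order/homological homogeneity of the specific algebras $\eJac_x$ and $\Inf_{\!\lambda}$, or else reading the theorem in its clique formulation, which is how it functions in this paper). Your flagging of the diagonal case $M\cong N$ and of the Azumaya caveat in the ``$\ram(A)$'' rephrasing is sensible, but as it stands the proposal proves only one of the two implications.
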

Every simple module of a PI-algebra over a field is finite-dimensional over the ground field. 

We denote by $\Psi$ the map $\Spec(A)\to \Spec(\cent(A))$ defined by ideal contraction. When $A$ is finite over $\cent(A)$ this is a finite morphism in the sense that $\#\Psi^{-1}(\pp)<\infty$ for all $\pp\in\Spec(\cent(A))$.

\subsection{The Jackson algebra}
Now, to summarise the constructions made so far bullet-wise:
\begin{itemize}
	\item We have taken a Kummer extension $A=\oo[\sqrt[n]{x}]$, $x\in\oo$, of degree $n$;
	\item On this extension we have chosen a $\zeta^r_n$-derivation;
	\item From $\zeta_n^r$-derivation we constructed the associated hom-Lie algebra (which is an infinitesimal structure for twisted derivations, analogously to Lie algebras for ordinary derivations) and its ``enveloping algebra'', $\eKW_x(r)$. 
\end{itemize}Hence, $\eKW_x(r)$ captures the $\zeta_n^r$-infinitesimal structure of $\oo[\sqrt[n]{x}]$. 

Clearly, $\eKW_x(r)$  is quite complicated in general and so is not so easy to work with. Fortunately there is a subalgebra inside $\eKW_x(r)$ that has some remarkable ring-theoretic, geometric and arithmetic properties, some of which $\eKW_x(r)$ itself does not enjoy. This subalgebra is isomorphic to a ``$\zeta_n^r$-deformation'' of the enveloping algebra of the Lie algebra $\mathfrak{sl}_2$. We will now describe this algebra.  

Put, in order to simplify notation a bit, $\omega:=(1-\zeta^{2r})$. Recall that $\zeta=\zeta_n$ is assumed to be primitive. Since $\zeta^{-(n-1)}=\zeta$ and $\zeta^{-(n-2)}=\zeta^2$, the algebra
$$S:=\oo\langle \eps_0,\eps_1,\eps_{n-1}\rangle\big/I,$$ with
\begin{multline*}
I:=\Big(\eps_0\eps_1-\zeta^r\eps_1\eps_0- (1-\zeta^r)\eps_1, \,\,
  \eps_{n-1}\eps_0-\zeta^r\eps_0\eps_{n-1}- (1-\zeta^r)\eps_{n-1},\\
  \eps_{n-1}\eps_1-\zeta^{2r}\eps_1\eps_{n-1}- x\omega\eps_0\Big)
\end{multline*}is easily seen to be a subalgebra of $\eKW_x(r)$.
Now, define the algebra
\begin{equation}\label{eq:J}
	\eJac_x(r):=\oo\langle\eps_0,\eps_1,\eps_2\rangle/J, \qquad r\in\Z,
\end{equation}where $J$ is the ideal 
$$ J :=\Big(\eps_0\eps_1-\zeta^r\eps_1\eps_0,\quad
  \eps_2\eps_0-\zeta^r\eps_0\eps_2,\quad
  \eps_2\eps_1-\zeta^{2r}\eps_1\eps_2- x\eps_0-x\omega\Big).$$
\begin{prop}The algebras $S$ and $\eJac_x(r)$ are isomorphic over $\oo\left[\omega^{-1}\right]$, and in particular fibre-wise. Furthermore, $\eJac_x(r)$ (and hence also $S$) is an iterated Ore extension (skew polynomial ring). The generator $\eps_{n-1}$ in $S$ becomes the generator $\eps_2$ in $\eJac_x(r)$.
\end{prop}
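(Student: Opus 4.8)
The plan is to establish the three assertions in turn, with the isomorphism $S\cong\eJac_x(r)$ over $\oo[\omega^{-1}]$ being the first and most delicate step, and the Ore extension structure following afterwards by inspection.

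First I would write down the explicit change of variables. In $S$ the generators $\eps_0,\eps_1,\eps_{n-1}$ satisfy, among others, $\eps_0\eps_1-\zeta^r\eps_1\eps_0=(1-\zeta^r)\eps_1$, which after the substitution $\eps_0\mapsto\eps_0+1$ becomes $\eps_0\eps_1-\zeta^r\eps_1\eps_0=0$ (since $(\eps_0+1)\eps_1-\zeta^r\eps_1(\eps_0+1)=\eps_0\eps_1-\zeta^r\eps_1\eps_0+(1-\zeta^r)\eps_1$), and similarly the relation $\eps_{n-1}\eps_0-\zeta^r\eps_0\eps_{n-1}=(1-\zeta^r)\eps_{n-1}$ becomes $\eps_{n-1}(\eps_0+1)-\zeta^r(\eps_0+1)\eps_{n-1}=\eps_2\eps_0-\zeta^r\eps_0\eps_2$ after relabelling $\eps_{n-1}=\eps_2$. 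The remaining relation $\eps_{n-1}\eps_1-\zeta^{2r}\eps_1\eps_{n-1}=x\omega\eps_0$ then becomes $\eps_2\eps_1-\zeta^{2r}\eps_1\eps_2=x\omega(\eps_0+1)=x\omega\eps_0+x\omega$. To match the relation in $J$, which reads $\eps_2\eps_1-\zeta^{2r}\eps_1\eps_2=x\eps_0+x\omega$, I would absorb the factor $\omega$ into the generator: rescaling $\eps_0\mapsto\omega^{-1}\eps_0$ (which is where invertibility of $\omega$ enters) turns $x\omega\eps_0$ into $x\eps_0$ while leaving the two commutation relations $\eps_0\eps_1=\zeta^r\eps_1\eps_0$ and $\eps_2\eps_0=\zeta^r\eps_0\eps_2$ homogeneous and hence unchanged. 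This gives a candidate $\oo[\omega^{-1}]$-algebra map $S\to\eJac_x(r)$; I would then check it sends the generators of $I$ to the generators of $J$ and conversely, so it is an isomorphism. Since the relations defining both algebras have coefficients already in $\oo$ (the $\omega^{-1}$ only appearing in the change of variables), reducing modulo any prime $\pp$ not dividing $\omega$ gives the fibre-wise statement; for $\pp\mid\omega$ one checks directly using Proposition~\ref{prop:reductionKW} that both sides degenerate compatibly, or simply notes that $\omega$ is a unit in $\oo$ in the situations of interest.

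Next, for the Ore extension claim, I would exhibit $\eJac_x(r)$ as a tower $\oo\subset\oo[\eps_0]\subset\oo[\eps_0][\eps_1;\tau_1]\subset\oo[\eps_0][\eps_1;\tau_1][\eps_2;\tau_2,\delta_2]$. Here $\oo[\eps_0]$ is an honest polynomial ring; the relation $\eps_0\eps_1=\zeta^r\eps_1\eps_0$ says $\eps_1$ normalises $\oo[\eps_0]$ via the $\oo$-automorphism $\tau_1$ determined by $\tau_1(\eps_0)=\zeta^{-r}\eps_0$ (using $\zeta^r\in\oo^\times$), so the second extension is a skew polynomial ring with zero derivation; finally the two relations $\eps_2\eps_0=\zeta^r\eps_0\eps_2$ and $\eps_2\eps_1=\zeta^{2r}\eps_1\eps_2+x\eps_0+x\omega$ define the action of $\eps_2$ on $\oo[\eps_0][\eps_1;\tau_1]$ by an automorphism $\tau_2$ with $\tau_2(\eps_0)=\zeta^r\eps_0$, $\tau_2(\eps_1)=\zeta^{2r}\eps_1$ together with a $\tau_2$-derivation $\delta_2$ sending $\eps_0\mapsto 0$, $\eps_1\mapsto x\eps_0+x\omega$. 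The only thing to verify is that $\tau_2$ is compatible with $\tau_1$, i.e. that $\delta_2$ and $\tau_2$ are well-defined on the relation $\eps_0\eps_1-\zeta^r\eps_1\eps_0$: applying $\tau_2$ gives $\zeta^r\eps_0\cdot\zeta^{2r}\eps_1-\zeta^r\cdot\zeta^{2r}\eps_1\cdot\zeta^r\eps_0=\zeta^{3r}(\eps_0\eps_1-\zeta^r\eps_1\eps_0)=0$, and the analogous check for $\delta_2$ (a Leibniz computation) is routine. Granting these, the usual criterion for iterated Ore extensions applies. The assignment $\eps_{n-1}\mapsto\eps_2$ is exactly the relabelling used in the first paragraph, so that claim is immediate.

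The main obstacle I anticipate is purely bookkeeping: tracking the composite change of variables $\eps_0\mapsto\omega^{-1}(\eps_0+1)$ (or performing the shift and the rescaling in the right order) and verifying that it transports all three generators of $I$ exactly onto the three generators of $J$ without introducing spurious terms — in particular confirming that the inhomogeneous piece of the last relation lands as $x\eps_0+x\omega$ rather than, say, $x\eps_0+x$ or $x\omega\eps_0+x\omega$. A secondary subtlety is making precise what ``fibre-wise'' means here and ensuring the isomorphism is genuinely an isomorphism of $\oo[\omega^{-1}]$-algebras and not merely a surjection: for this I would note that both $S$ and $\eJac_x(r)$ are free $\oo$-modules with the evident PBW-type monomial basis $\{\eps_0^a\eps_1^b\eps_2^c\}$ (which one gets for free from the iterated Ore structure), so a generator-matching algebra map between them that is visibly bijective on bases is an isomorphism. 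No genuinely hard mathematics is involved; the content is in organising the substitution cleanly.
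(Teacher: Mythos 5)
Your proof is correct and is essentially the intended argument (the paper states the proposition without proof, the details living in \cite{LarssonKummerWitt}): the shift $\eps_0\mapsto\eps_0+1$ followed by rescaling $\eps_0$ by $\omega^{-1}$ matches the generators of $I$ with those of $J$, and the tower $\oo[\eps_0][\eps_1;\tau_1][\eps_2;\tau_2,\delta_2]$ with $\tau_1(\eps_0)=\zeta^{-r}\eps_0$, $\tau_2(\eps_0)=\zeta^{r}\eps_0$, $\tau_2(\eps_1)=\zeta^{2r}\eps_1$, $\delta_2(\eps_0)=0$, $\delta_2(\eps_1)=x\eps_0+x\omega$ gives the iterated Ore structure, with exactly the consistency checks you indicate.

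Two small repairs. First, the composite substitution is $\eps_0\mapsto\omega^{-1}\eps_0+1$, not $\omega^{-1}(\eps_0+1)$: your step-by-step computation does the shift and the rescaling in the correct order, but the parenthetical formula would fail to annihilate the first two generators of $I$ (it leaves terms proportional to $(\omega^{-1}-1)(1-\zeta^r)\eps_1$ and $(\omega^{-1}-1)(1-\zeta^r)\eps_{n-1}$). Second, drop the alternative claim that for $\pp\mid\omega$ ``both sides degenerate compatibly'': when $\bar\omega=0$, i.e.\ $\bar\zeta^{2r}=1$, the third relation of $S_{/\pp}$ becomes $\eps_{n-1}\eps_1-\eps_1\eps_{n-1}=0$, while that of $(\eJac_x)_{/\pp}$ is $\eps_2\eps_1-\eps_1\eps_2=x\eps_0$ (compare remark \ref{rem:J}(ii)), and for $\bar x\neq 0$ these algebras are not isomorphic -- their abelianisations already differ. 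Nothing is lost by omitting this, since ``in particular fibre-wise'' is simply base change of the $\oo\left[\omega^{-1}\right]$-isomorphism to the residue fields $k(\pp)$ with $\omega\notin\pp$, which your first observation already provides.
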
If $n$ is composite with at least two different prime factors, such that $\zeta^{2r}$ is primitive, then $\omega$ is invertible in $\oo$. Hence, in that case, $S$ and $\eJac_x(r)$ are isomorphic already over $\oo$.  
\begin{dfn}
The algebra $\eJac_x(r)$ is called the \emph{Jackson algebra of level $r$}. Let $\mathsf{w}$ be the Teichm\"uller character defined by $\sigma(\zeta)=\zeta^{\mathsf{w}(\sigma)}$. The family of algebras
$$\eJac_x^\mathrm{tot}:=\Big\{\eJac_x(\mathsf{w}(\sigma))\,\,\big\vert\,\, \sigma\in\Z/n\Big\},$$ is called the \emph{total Jackson algebra}. This is a $\Z/n$-torsor.
\end{dfn} 
We have already indicated the following remark, but it is important enough to repeat. 
\begin{remark}
The algebras $\eJac_x$, $\eJac_x^\mathrm{tot}$ and $\eKW_x(r)$, encode the Galois structure of $\oo[\sqrt[n]{x}]$ through its $\zeta^r$-derivations, even though we have reduced the number of generators from $n$ to $3$. 
\end{remark}

\begin{remark}\label{rem:J} 
\begin{itemize}
	\item[(i)]  The name ``Jackson algebra'' is in honour of F.H. Jackson (1870--1960) and his work on $q$-derivations. Indeed, $\eJac_x(r)$ is isomorphic to the (enveloping algebra of the) ``Jackson-$\mathfrak{sl}_2$'', a $q$-deformed version of the three-dimensional Lie algebra $\mathfrak{sl}_2$, deformed using $q$-derivations. See \cite{LaSi} for the details. 
	\item[(ii)] When $\zeta^{2r}=1$, the algebra $\eJac_x(r)$ is either isomorphic to the commutative polynomial algebra $\oo[t_1,t_2,t_3]$ (when $\zeta=1$) or to the $\oo$-algebra on generators $\eps_0$, $\eps_1$, $\eps_2$ and with relations
$$\eps_0\eps_1+\eps_1\eps_0=0,\quad \eps_2\eps_0+\eps_0\eps_2=0,\quad \eps_2\eps_1-\eps_1\eps_2=x\eps_0.$$
\end{itemize}
\end{remark} 

\subsection{The centre $\cent(\eJac_x(r))$}

To simplify notation we will from now on often omit the dependence on $r$ in the notation $\eJac_x(r)$ and simply write $\eJac_x$.

As before, we put
$$A_{/\pp}:=A\otimes_\oo k(\pp)$$for any $\oo$-algebra $A$. 

See \cite{LarssonKummerWitt} for the proofs of the following theorem and its corollary.
\begin{thm}\label{prop:centre_J}
Assume that $\zeta^r$ is a primitive $n$-th root of unity. Then the following holds. 
\begin{itemize}
	\item[(i)] For $x=0$:
	$$\cent(\eJac_0)_{/\pp}=k(\pp)\Big[\eps_0^a\eps_1^b\eps_2^c\,\,\,\Big\vert\,\,\, r(a+2b)\text{ and } r(b-c)\equiv 0 \,\,\mathrm{mod}\, n\Big].$$Observe that $k(\pp)[\eps_0^l, \eps_1^l, \eps_2^l]\subseteq \cent(\eJac_0)_{/\pp}$. 
	\item[(ii)] For $x\neq 0$:
	$$k(\pp)[\eps_0^l, \eps_1^l, \eps_2^l]\subseteq\cent(\eJac_x)_{/\pp}$$where $l\in\N$ is such that $lr$ is the least multiple of $n$.
\end{itemize}
In both cases we have
\begin{itemize}
	\item[(iii)] $\eJac_x$ is an Auslander-regular, homologically homogeneous\footnote{This is another regularity property, which I won't define, of non-commutative rings, generalising the commutative counterpart.}, noetherian, fibre-wise Cohen--Macaulay domain, finite as a module over its centre and hence a polynomial identity ring (PI) of $\mathrm{pideg}(\eJac_x)=n$;
	\item[(iv)] $\eJac_x$ is a maximal order in its division ring of fractions;
	\item[(v)] $\Spec(\cent(\eJac_x))$ is a normal, irreducible scheme of dimension three. 
	\item[(vi)] $\Spec(\cent(\eJac_0))$ is in addition \emph{fibre-wise Cohen--Macaulay} in the commutative sense, i.e., 
	$$\Spec\big(\cent(\eJac_x)_{/\pp}\big)=\Spec\big(\cent(\eJac_0)\otimes_\oo k(\pp)\big)$$ is a Cohen--Macaulay scheme for all $\pp\in\Spec(\oo)$. 
\end{itemize}
\end{thm}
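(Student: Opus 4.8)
The plan is to reduce every assertion to the iterated Ore (skew polynomial) structure of $\eJac_x$ together with a single degeneration to a quantum affine space. Concretely, $\eJac_x(r)=\oo[\eps_1][\eps_0;\sigma][\eps_2;\tau,\delta]$ with $\sigma(\eps_1)=\zeta^r\eps_1$; $\tau(\eps_0)=\zeta^r\eps_0$, $\tau(\eps_1)=\zeta^{2r}\eps_1$; and $\delta(\eps_0)=0$, $\delta(\eps_1)=x\eps_0+x\omega$. First I would check that $\delta$ is a well-defined $\tau$-derivation compatible with the relation $\eps_0\eps_1=\zeta^r\eps_1\eps_0$ (a one-line verification), so that this presentation is legitimate; the construction of iterated Ore extensions then gives at once that $\eJac_x$ is a Noetherian domain, free as an $\oo$-module on the monomials $\eps_0^a\eps_1^b\eps_2^c$. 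Filtering by total degree in $\eps_0,\eps_1,\eps_2$, the top-degree part of the last relation is $\eps_2\eps_1-\zeta^{2r}\eps_1\eps_2$, so $\mathrm{gr}\,\eJac_x\cong\eJac_0$, the quantum affine $3$-space attached to the commutation scalars $\zeta^r,\zeta^r,\zeta^{2r}$. This degeneration is the engine for (iii), (iv) and the less formal parts of (v)--(vi).

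Next I would compute the centre. For $x=0$ the algebra is $\N^3$-graded with one-dimensional homogeneous components, so $\cent(\eJac_0)_{/\pp}$ is the span of the central monomials, and writing out $\eps_k\cdot(\eps_0^a\eps_1^b\eps_2^c)=\zeta^{(\ast)}(\eps_0^a\eps_1^b\eps_2^c)\cdot\eps_k$ in each of the three relations produces exactly $r(a+2b)\equiv 0$ and $r(b-c)\equiv 0 \pmod{n}$ (the third congruence $r(a+2c)\equiv 0$ being a $\Z$-combination of the two); this is (i), and in particular $\eps_0^l,\eps_1^l,\eps_2^l$ are central with $l=n$. For $x\neq 0$ there is no grading, so I would verify directly that $\eps_0^n,\eps_1^n,\eps_2^n$ remain central: centrality of $\eps_0^n$ is immediate from the $q$-commutation relations, while for $\eps_1^n$ and $\eps_2^n$ one expands $\delta(\eps_1^n)$ and $\eps_2^n\eps_1$ via the $\tau$-derivation rule, and the correction terms are governed by the sums $\sum_{i<n}\zeta^{ri}$ and $\sum_{i<n}\zeta^{2ri}$. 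Since $\zeta^r$ is a primitive $n$-th root, these sums vanish (the borderline $n=2$ being absorbed by $\omega=0$), so the correction terms die and (ii) follows. Consequently $\eJac_x$ is module-finite over $\oo[\eps_0^n,\eps_1^n,\eps_2^n]$, hence over $\cent(\eJac_x)$, hence PI; that $\mathrm{pideg}(\eJac_x)=n$ is read off from the degeneration, the PI-degree of a quantum affine space being determined by its commutation matrix.

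The homological assertions are then obtained by transfer. Auslander-regularity and the identities $\mathrm{Kdim}=\mathrm{gl.dim}=n+\mathrm{Kdim}(\oo)$ (so $=3$ fibre-wise) pass through Ore extensions by an automorphism-with-derivation (Ekstr\"om, Bj\"ork), while fibre-wise Cohen--Macaulayness and homological homogeneity rise from the quantum affine space $\mathrm{gr}\,\eJac_x$ to $\eJac_x$ along the degree filtration; (iv) follows from standard maximal-order criteria once one knows $\eJac_x$ is Auslander--Gorenstein, Cohen--Macaulay, a PI-domain with normal centre and with $\mathrm{gr}\,\eJac_x$ a maximal order. For (v): irreducibility is the domain property; $\dim\Spec\cent(\eJac_x)_{/\pp}=3$ because $\eJac_x{}_{/\pp}$ has GK-dimension $3$ and is module-finite over its centre; normality is inherited from the maximal-order property. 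Finally (vi): for $x=0$, $\cent(\eJac_0)_{/\pp}=k(\pp)[\,S\,]$ is the semigroup algebra of $S=\{(a,b,c)\in\N^3 : r(a+2b)\equiv r(b-c)\equiv 0 \pmod{n}\}$, which is saturated in $\N^3$, hence normal and, by Hochster's theorem, Cohen--Macaulay; since $S$ is defined over $\Z$, this is compatible with base change $\oo\to k(\pp)$, giving the displayed identity.

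The step I expect to be the main obstacle is controlling $\cent(\eJac_x)$ for $x\neq 0$ beyond the explicit polynomial subalgebra $\oo[\eps_0^n,\eps_1^n,\eps_2^n]$ --- that is, proving $\Spec\cent(\eJac_x)$ is normal and that $\eJac_x$ is a maximal order when one cannot simply read the centre off a grading. The route I would take is to compare $\mathrm{gr}\,\cent(\eJac_x)$ with $\cent(\mathrm{gr}\,\eJac_x)=\cent(\eJac_0)$: there is always an inclusion, and a Hilbert-series/Cohen--Macaulay count should force it to be an equality, after which normality and the maximal-order property descend from the quantum affine space. Making that dimension count airtight, rather than heuristic, is the delicate point.
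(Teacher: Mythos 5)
The paper does not actually prove Theorem \ref{prop:centre_J}: it defers entirely to \cite{LarssonKummerWitt} (where, as the text notes, the centre in case (ii) is computed explicitly). So your attempt can only be measured against the route indicated there, namely explicit centre computations plus standard ring-theoretic transfer along the iterated Ore/filtered structure --- which is essentially the plan you follow. Your Ore presentation is legitimate (the compatibility of $\delta$ with $\eps_0\eps_1=\zeta^r\eps_1\eps_0$ does check out), and parts (i) and (ii) of your argument are correct: the three monomial-centrality congruences for $\eJac_0$ reduce to the two stated ones because $r(a+2c)=r(a+2b)-2r(b-c)$, and your induction for $\eps_1^n$, $\eps_2^n$ with correction coefficients governed by $\sum_{i<n}\zeta^{ri}$ and $\omega\sum_{i<n}\zeta^{2ri}$ (the case $\zeta^{2r}=1$ killed by $\omega=0$) is exactly the right cancellation. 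The homological transfers (Ekstr\"om/Bj\"ork through Ore extensions and filtrations, Chamarie-type lifting of the maximal-order property from $\mathrm{gr}$, normality of the centre of a maximal order, Hochster for the semigroup algebra in (vi)) are the standard and appropriate citations.

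There are, however, two genuine gaps, both concentrated where you yourself locate the difficulty. First, $\mathrm{pideg}(\eJac_x)=n$ is \emph{not} ``read off from the degeneration'': passing multilinear identities to the associated graded only gives $\mathrm{pideg}(\eJac_x)\geq \mathrm{pideg}(\eJac_0)=n$. Since $\eJac_x$ is free of rank $n^3$ over $\oo[\eps_0^n,\eps_1^n,\eps_2^n]$, one has $\mathrm{pideg}(\eJac_x)^2\cdot\big[\mathrm{Frac}(\cent(\eJac_x)):\mathrm{Frac}(\oo[\eps_0^n,\eps_1^n,\eps_2^n])\big]=n^3$, so the equality $\mathrm{pideg}=n$ is equivalent to the centre being large enough, i.e.\ to exhibiting central elements beyond the $\eps_i^n$ (or to bounding simple modules by dimension $n$, e.g.\ after localizing at the normal element $\eps_0$); this is exactly the information you do not yet have for $x\neq0$. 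Second, your proposed fix --- forcing $\mathrm{gr}\,\cent(\eJac_x)=\cent(\eJac_0)$ by a Hilbert-series/Cohen--Macaulay count --- is not a sound principle: for filtered deformations the inclusion $\mathrm{gr}\,\cent(A)\subseteq\cent(\mathrm{gr}\,A)$ is typically strict (the Weyl algebra in characteristic zero, whose associated graded is a polynomial ring, is the standard counterexample), so no abstract dimension count can force equality; it holds here only because of the root-of-unity situation, and must be proved by actually producing the extra (Casimir-type) central elements or computing $\cent(\eJac_x)$ explicitly, which is what the cited reference does. A minor further point: in (iv)--(v) you should avoid circularity by first obtaining the maximal-order property from $\mathrm{gr}\,\eJac_x$ (Chamarie), and only then deducing normality of the centre, rather than listing a normal centre among the hypotheses of the maximal-order criterion.
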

In fact, in \cite{LarssonKummerWitt}, the centre in case (ii) is computed explicitly, but we won't be needing this in this paper.

We immediately get the following corollary.
\begin{corollary}\label{cor:Jacks_hom}With the notation as above:
\begin{itemize}
	\item[(i)] The algebra $(\eJac_x)_{/\pp}$ is finite as a module over $k(\pp)\big[\eps_0^l, \eps_1^l, \eps_2^l\big]$.
	\item[(ii)] Hence, the ring extension $k(\pp)\big[\eps_0^l, \eps_1^l, \eps_2^l\big]\subseteq \cent(\eJac_x)$ is finite, and consequently the morphism 
$$\psi: \,\,\Spec\big(\cent(\eJac_x)\big)\to \mathbb{A}^3_{(l)}:=\Spec\Big(k(\pp)\big[\eps_0^l, \eps_1^l, \eps_2^l\big]\Big)$$ is finite as a morphism of schemes. We put $l$ in the notation to indicate that we have a weighted version of the affine three-space. 
	\item[(iii)] In the other direction, any maximal $\mm$ in $\cent(\eJac_x)$ splits into $i$ maximal ideals in $\eJac_x$, where $1\leq i\leq m$, and where $m$ is the rank of $\eJac_x$ as a module over $\cent(\eJac_x)$. In other words, 
		$$\Psi^{-1}(\mm)=\Big\{\mathfrak{m}_1, \mathfrak{m}_2,\dots,\mathfrak{m}_i\,\, \Big\vert \,\,\text{for some $1\leq i\leq m$}\Big\}.$$Recall that $\Psi$ is the contraction map on ideals. 
		\item[(iv)] Also, any maximal $\mm$ in $k(\pp)\big[\eps_0^l, \eps_1^l, \eps_2^l\big]$ splits into $j$ maximal ideals in $\eJac_x(r)$, where $1\leq j\leq n$, and where $n$ is the rank of $\eJac_x$ as a module over $k(\pp)\big[\eps_0^l, \eps_1^l, \eps_2^l\big]$. In other words, 
		$$\big(\psi\circ\Psi\big)^{-1}(\mm)=\Big\{\mathfrak{m}_1, \mathfrak{m}_2,\dots,\mathfrak{m}_j\,\, \Big\vert \,\,\text{for some $1\leq j\leq n$}\Big\}.$$
\end{itemize}   
\end{corollary}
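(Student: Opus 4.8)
These four statements are essentially formal consequences of Theorem~\ref{prop:centre_J} together with elementary commutative and PI-algebra, so I would keep the argument short and treat the parts in the stated order, working fibre-wise over a prime $\pp\in\Spec(\oo)$ whenever $k(\pp)$ appears (the centre having been described only fibre-wise). Write $R:=k(\pp)[\eps_0^l,\eps_1^l,\eps_2^l]$ for the weighted polynomial subring of Theorem~\ref{prop:centre_J}(ii). For part~(i): since $\eJac_x$ is an iterated Ore extension in $\eps_0,\eps_1,\eps_2$, the monomials $\eps_0^a\eps_1^b\eps_2^c$ with $a,b,c\geq 0$ are an $\oo$-basis, hence a $k(\pp)$-basis of $(\eJac_x)_{/\pp}$; as $\eps_0^l,\eps_1^l,\eps_2^l$ are central one can split off $l$-th powers, writing each such monomial as a central element of $R$ times a monomial $\eps_0^{a'}\eps_1^{b'}\eps_2^{c'}$ with $0\leq a',b',c'<l$. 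Thus $(\eJac_x)_{/\pp}$ is generated as an $R$-module by the finitely many monomials with exponents below $l$, so it is module-finite over $R$; the only thing to verify is that moving the central $l$-th powers to the left only introduces units (powers of $\zeta^r$) coming from the defining relations and never produces new monomials.

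For part~(ii): $R$ is a noetherian polynomial ring and $R\subseteq\cent(\eJac_x)_{/\pp}\subseteq(\eJac_x)_{/\pp}$, with the outer ring module-finite over $R$ by~(i). A submodule of a finite module over a noetherian ring is finite, so $\cent(\eJac_x)_{/\pp}$ is module-finite, hence integral, over $R$; a module-finite extension of affine algebras induces a finite morphism on spectra, which is exactly $\psi$, the subscript $(l)$ only recording the grading in which $\eps_i^l$ has weight $l$.

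For parts~(iii) and~(iv): both are the standard fibre analysis of a module-finite ring extension, applied once to $\cent(\eJac_x)\subseteq\eJac_x$ (finite by Theorem~\ref{prop:centre_J}(iii)) and once to $R\subseteq\eJac_x$ (finite by~(i)--(ii)), using that $\eJac_x$ is a PI-ring, so that a prime lying over a maximal ideal is itself maximal. For a maximal ideal $\mm$ of $\cent(\eJac_x)$, the quotient $\eJac_x/\mm\eJac_x$ is a nonzero finite-dimensional algebra over the field $\cent(\eJac_x)/\mm$ of dimension at most the number $m$ of module generators of $\eJac_x$ over its centre (Nakayama); its maximal, equivalently primitive, ideals are precisely the maximal ideals of $\eJac_x$ contracting to $\mm$ --- lying-over supplying at least one --- and there are at most $m$ of them: this set is $\Psi^{-1}(\mm)$. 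Running the same argument with $R$ in place of $\cent(\eJac_x)$, and composing with the finite fibres of $\psi$ from~(ii), gives $(\psi\circ\Psi)^{-1}(\mm)$ finite with the bound $1\leq j\leq n$, $n$ being the rank of $\eJac_x$ over $R$.

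I do not expect a serious obstacle here: the only things needing care are (a) in~(i), checking that the non-commutative twists $\zeta^r$ never move a monomial outside the chosen finite generating set when the central $l$-th powers are slid past the other generators, and (b) reconciling the numerical bounds $m$ and $n$ with the precise meaning of ``rank'' in the statement (generic rank of the module versus minimal number of module generators); both are bookkeeping rather than substance. One must also consistently read the objects fibre-wise over $\pp$, since $\cent(\eJac_x)$ is only pinned down fibre-wise in Theorem~\ref{prop:centre_J}.
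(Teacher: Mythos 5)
Your proposal is correct and takes what is essentially the paper's (implicit) route: the paper states the corollary as an immediate consequence of Theorem~\ref{prop:centre_J}, deferring details to \cite{LarssonKummerWitt}, and your filling-in --- the PBW/Ore monomial basis with central $l$-th powers split off for (i), the Artin--Tate/noetherian-submodule argument for (ii), and lying-over plus the finite-dimensional fibre algebra $\eJac_x/\mm\eJac_x$ for (iii)--(iv) --- is exactly the standard argument this relies on. The only looseness, the precise meaning of ``rank'' versus number of module generators in the bounds of (iii)--(iv), is inherited from the statement itself and you flag it appropriately.
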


I conjectured in \cite{LarssonKummerWitt} that $\Spec(\cent(\eJac_0))$ is singular for all $n$, and that it might also be rational. In fact, I could prove the following theorem. 
\begin{thm}\label{thm:rationalCM}Let $K$ be a field of characteristic zero and let $\eJac_x^\mathrm{a}$ be the base change $(\eJac_x)_{/K}\otimes_K K^\mathrm{a}$ of $(\eJac_x)_{/K}$ to the algebraic closure $K^\mathrm{al}$. Then 
\begin{itemize}
	\item[(i)] $\Spec\big(\cent(\eJac_x^\mathrm{a})\big)$ has rational singularities;
	\item[(ii)] both $\Spec\big(\cent(\eJac_x^\mathrm{a})\big)$ and $\Spec\big(\cent((\eJac_x)_{/K})\big)$ are Cohen--Macaulay.
\end{itemize} 
\end{thm}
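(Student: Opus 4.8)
The plan is to reduce the statement to a concrete local computation on the invariant-ring description of $\cent(\eJac_x^{\mathrm a})$ and then invoke the standard toolbox for quotient singularities. First I would fix a primitive root of unity $\zeta=\zeta_n$ with $\zeta^r$ primitive (Theorem~\ref{prop:centre_J}), and unwind the defining relations of $\eJac_x(r)$ modulo the algebraic closure $K^{\mathrm a}$. Since $\eps_0\eps_1=\zeta^r\eps_1\eps_0$, $\eps_2\eps_0=\zeta^r\eps_0\eps_2$ and $\eps_2\eps_1=\zeta^{2r}\eps_1\eps_2+x\eps_0+x\omega$, the associated graded is a quantum affine $3$-space $\mathbf{Q}^3$ with parameters $(\zeta^r,\zeta^r,\zeta^{2r})$, so by the PI-degree $n$ statement in Theorem~\ref{prop:centre_J}(iii) and Corollary~\ref{cor:Jacks_hom} the centre is the ring of invariants of a diagonal action of a finite abelian group $\Gamma\subset(K^{\mathrm a})^{\times}\times(K^{\mathrm a})^{\times}\times(K^{\mathrm a})^{\times}$ on the polynomial ring $K^{\mathrm a}[\eps_0^l,\eps_1^l,\eps_2^l]$, intertwined with the extra affine term $x\eps_0+x\omega$; explicitly $\cent(\eJac_x^{\mathrm a})$ is (the $x$-deformation of) $\big(K^{\mathrm a}[\eps_0,\eps_1,\eps_2]\big)^{\Gamma}$ where $\Gamma\cong\Z/n$ acts by $\eps_0\mapsto\zeta^{2r}\eps_0,\ \eps_1\mapsto\zeta^{-r}\eps_1,\ \eps_2\mapsto\zeta^{r}\eps_2$, up to the translation correction forced by the inhomogeneous relation.

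For part (i), once the centre is identified with the coordinate ring of a quotient $\mathbb{A}^3/\Gamma$ by a finite group acting linearly (and trivially on no nonzero subspace after the change of coordinates absorbing $x\omega$), rational singularities follow from the Boutot–Hochster–Roberts type theorem: the invariant ring of a linearly reductive group acting on a polynomial ring over a field of characteristic zero has rational singularities (indeed, it is a direct summand of a regular ring, hence it even has rational singularities in the strongest sense). Over $K^{\mathrm a}$ with $\mathrm{char}\,K=0$ this applies verbatim. For part (ii) over $K^{\mathrm a}$, Cohen–Macaulayness is immediate from the same summand argument (Hochster–Eagon): a direct summand of a polynomial ring is Cohen–Macaulay; alternatively it follows from rational singularities plus normality. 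To descend Cohen–Macaulayness to $\cent\big((\eJac_x)_{/K}\big)$ over the non-closed field $K$, I would use faithfully flat descent: $\cent(\eJac_x^{\mathrm a})=\cent((\eJac_x)_{/K})\otimes_K K^{\mathrm a}$ (the centre commutes with the flat base change $K\to K^{\mathrm a}$, which one checks directly from the explicit generators, or cite Theorem~\ref{prop:centre_J}), and Cohen–Macaulayness descends along faithfully flat ring maps with Cohen–Macaulay (here regular) fibres — in fact $K\to K^{\mathrm a}$ is faithfully flat with $0$-dimensional regular fibres, so $\cent((\eJac_x)_{/K})$ is Cohen–Macaulay iff $\cent(\eJac_x^{\mathrm a})$ is. This also re-proves the dimension-three normality of Theorem~\ref{prop:centre_J}(v) along the way, but I may just quote it.

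The main obstacle I anticipate is the bookkeeping needed to show cleanly that $\cent(\eJac_x^{\mathrm a})$ really is (an affine deformation of) a diagonal finite-group quotient of $\mathbb{A}^3$ — i.e. that the inhomogeneous term $x\eps_0+x\omega$ in the last relation does not spoil the invariant-theoretic picture. When $x\neq 0$ the centre in Theorem~\ref{prop:centre_J}(ii) is only described as a finite extension of $k(\pp)[\eps_0^l,\eps_1^l,\eps_2^l]$, not as a literal invariant ring, so I would need to either invoke the explicit computation of that centre from \cite{LarssonKummerWitt} (which the excerpt says exists) or argue that, after the substitution $\eps_0\mapsto\eps_0+\frac{x\omega}{x}=\eps_0+\omega$ (valid since $\omega=1-\zeta^{2r}\neq0$ when $\zeta^{2r}\neq1$, and the degenerate case $\zeta^{2r}=1$ is handled separately by Remark~\ref{rem:J}(ii)), the last relation becomes homogeneous and one lands exactly in the quantum-affine-space situation whose centre is the stated diagonal invariant ring. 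Once that identification is secured, parts (i) and (ii) are formal consequences of Boutot/Hochster–Roberts and flat descent respectively.
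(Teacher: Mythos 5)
The paper itself states this theorem without giving a proof (it is asserted with reference to the computations in \cite{LarssonKummerWitt}), so I can only judge your argument on its own merits, and there is a genuine gap in it: the structural identification of $\cent(\eJac_x^\mathrm{a})$ in the generic case $x\neq 0$. Your whole strategy rests on exhibiting the centre as (a mild deformation of) the invariant ring of a finite abelian group acting diagonally on $K^\mathrm{a}[\eps_0,\eps_1,\eps_2]$, so that Hochster--Eagon and Boutot apply. But the proposed repair, the substitution $\eps_0\mapsto\eps_0+\omega$, does not ``land in the quantum-affine-space situation'': it turns the last relation into $\eps_2\eps_1-\zeta^{2r}\eps_1\eps_2=x\eps_0$, while simultaneously making the first two relations inhomogeneous, e.g.\ $\eps_0\eps_1-\zeta^r\eps_1\eps_0=\omega(1-\zeta^r)\eps_1$. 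In fact $\eJac_x$ with $x\neq0$ cannot be isomorphic to a quantum affine $3$-space: the paper's own computation of the one-dimensional point locus (a conic $\eps_1\eps_2=x$ in $\{\eps_0=0\}$ together with an isolated point, versus the union of the coordinate planes for $x=0$) distinguishes the two, and the only isomorphism the paper provides is $\eJac_x\simeq S$, a $q$-deformed $\mathfrak{sl}_2$-type algebra whose centre generically contains a Casimir-type element on top of $\eps_0^l,\eps_1^l,\eps_2^l$ (this is why Theorem \ref{prop:centre_J}(ii) only gives an inclusion, and why the explicit centre is a separate nontrivial computation in \cite{LarssonKummerWitt}). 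So for $x\neq0$ the centre is not shown to be a direct summand of a polynomial ring, Boutot/Hochster--Eagon cannot be invoked as stated, and part (i) together with the $K^\mathrm{a}$-half of (ii) remain unproven; falling back on ``invoke the explicit computation from \cite{LarssonKummerWitt}'' is deferring precisely the input that carries the content of the theorem.

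What does hold up in your proposal: the case $x=0$, where Theorem \ref{prop:centre_J}(i) exhibits the centre as a semigroup ring, i.e.\ a diagonal finite-abelian-group invariant ring, so your Boutot/Hochster--Eagon argument is correct there; the compatibility $\cent(\eJac_x\otimes_K K^\mathrm{a})=\cent((\eJac_x)_{/K})\otimes_K K^\mathrm{a}$ (checked on a $K$-basis of $K^\mathrm{a}$); and the faithfully flat descent of Cohen--Macaulayness from $K^\mathrm{a}$ to $K$, which correctly handles the second half of (ii) once the $K^\mathrm{a}$-statement is known. To close the gap you would need either the explicit presentation of $\cent(\eJac_x)$ for $x\neq0$ (if it is a hypersurface or complete intersection, Cohen--Macaulayness is immediate, but rational singularities would still require a real argument), or a precise flat-degeneration argument to the $x=0$ toric centre combined with Elkik's theorem on deformations of rational singularities; your remark about the associated graded gestures at this, but it needs the nontrivial verification that the induced filtration/degeneration on the centre is flat with special fibre the $x=0$ centre, since in general $\cent(\mathrm{gr}\,A)\neq\mathrm{gr}(\cent(A))$. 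The degenerate case $\zeta^{2r}=1$ also still needs its separate treatment, as you note.
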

The question I couldn't, and still can't, answer in \cite{LarssonKummerWitt} is whether this also implies that $\Spec\big(\cent((\eJac_x)_{/K})\big)$ itself has rational singularities. 

\subsection{The non-commutative space $\Xscr_{\eJac_x}$}\label{sec:J}
\subsubsection{Properties of $\Xscr_{\eJac_x}$}
Since $\eJac_x$ is finite as a module over its centre, we can directly infer the following result from the previous discussions. 

\begin{corollary}The space  $\Xscr_{\eJac_x}$ satisfies the following properties.
\begin{itemize}
	\item[(i)] It is Auslander-regular, homologically homogeneous, noetherian and irreducible.
	\item[(ii)] It is fibre-wise Cohen--Macaulay of dimension 3.
	\item[(iii)] It is fibre-wise \emph{rational} in the sense that it is a maximal order in its division ring of fractions. 
	\item[(iv)] It has fibre-wise (at worst) \emph{geometric} rational singularities in that its base change to the algebraic closure has rational singularities.  
	\item[(v)] Contraction of ideals defines a finite morphism  $\Xscr_{\eJac_x}\to \Spec(\cent(\eJac_x))$ which is one-to-one over $\azu(A)$, and $j$-to-one, for some $1\leq j\leq n$, over $\ram(A)$. We view  $\Xscr_{\eJac_x}$ as fibred over $\Spec(\cent(A))$. 
\end{itemize}   
\end{corollary}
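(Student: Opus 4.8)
The plan is to read off each item from the definition of the non-commutative space together with the structural results on $\eJac_x$ already established. Recall that, by definition, $\Xscr_{\eJac_x}$ has a geometric property $\mathbf{P}$ precisely when $\Spec(\cent(\eJac_x))$ does, and that the fibre-wise statements refer to the fibres $\Spec(\cent(\eJac_x)_{/\pp})$. So the task reduces, item by item, to translating the claim into a statement about $\cent(\eJac_x)$ (or its fibres) and invoking the theorems of the previous subsection; the hypothesis that $\eJac_x$ is finite over its centre is what makes all of this applicable.

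For (i): $\eJac_x$ is Auslander-regular, homologically homogeneous and noetherian by Theorem \ref{prop:centre_J}(iii), and irreducibility is exactly the assertion that $\Spec(\cent(\eJac_x))$ is irreducible, which is Theorem \ref{prop:centre_J}(v). For (ii): Theorem \ref{prop:centre_J}(vi) gives that each fibre of $\Spec(\cent(\eJac_x))$ is Cohen--Macaulay, and Theorem \ref{prop:centre_J}(v), together with the fibre-wise dimension count transported from Theorem \ref{thm:KW}(iii) to the subalgebra $\eJac_x$, pins the dimension at three. For (iii): $\eJac_x$ is a maximal order in its division ring of fractions by Theorem \ref{prop:centre_J}(iv); the fibre-wise refinement follows because $\eJac_x$ is an iterated Ore extension (by the Proposition identifying $\eJac_x(r)$ with an iterated skew polynomial ring), so each fibre is again such an Ore extension over a field, hence a domain, and the maximal-order property descends to fibres as in Theorem \ref{thm:KW}(iv). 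For (iv): apply Theorem \ref{thm:rationalCM}(i) with $K=k(\pp)$, i.e., fibre-by-fibre to $(\eJac_x)_{/\pp}$, whose centre base-changed to $k(\pp)^{\mathrm{al}}$ has rational singularities.

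For (v): the morphism in question is the contraction map $\Psi\colon \Spec(\eJac_x)\to\Spec(\cent(\eJac_x))$, which is finite precisely because $\eJac_x$ is finite as a module over its centre (Theorem \ref{prop:centre_J}(iii)). Over the Azumaya locus $\azu(A)$ the extension of a maximal ideal of the centre remains maximal (this is the defining property of $\azu(A)$) and, $\eJac_x$ being Azumaya there, the fibre of $\Psi$ is a single point; equivalently, by M\"uller's theorem, $\Ext^1_{\eJac_x}$ between non-isomorphic simples lying over a common central point vanishes away from $\ram(A)$. Over $\ram(A)$, Corollary \ref{cor:Jacks_hom}(iii) bounds the fibre of $\Psi$ by $m=\mathrm{pideg}(\eJac_x)=n$, so $\Psi$ is $j$-to-one with $1\leq j\leq n$ there, as claimed.

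The only place requiring genuine care is the fibre-wise content of (ii)--(iv): the theorems for $\eJac_x$ are stated over $\oo$, so one must verify that Cohen--Macaulayness, the maximal-order property, and rational singularities are stable under the base change $\oo\to k(\pp)$. Cohen--Macaulayness of the fibres is Theorem \ref{prop:centre_J}(vi) on the nose; the maximal-order property for the fibres follows from the Ore-extension description, which specialises well; and rational singularities of the fibres is Theorem \ref{thm:rationalCM} applied with $K=k(\pp)$. With these checks in hand, every remaining item is a direct translation through the definition ``$\Xscr_{\eJac_x}$ has property $\mathbf{P}$ iff $\Spec(\cent(\eJac_x))$ does.''
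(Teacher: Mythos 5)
Your proposal is correct and follows essentially the same route as the paper, which offers no separate argument beyond noting that, since $\eJac_x$ is finite over its centre, the corollary is read off item by item from Theorem \ref{prop:centre_J}, Theorem \ref{thm:rationalCM}, Corollary \ref{cor:Jacks_hom} and the definition that $\Xscr_{\eJac_x}$ has a property $\mathbf{P}$ exactly when $\Spec(\cent(\eJac_x))$ does. The only caveat (shared with the paper itself) is in (iv): Theorem \ref{thm:rationalCM} is stated for a base field of characteristic zero, so invoking it with $K=k(\pp)$ for positive-characteristic fibres is not literally licensed and would need a separate argument or a suitable positive-characteristic notion of rational singularities.
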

Note that $\Xscr_{\eJac_x}$ satisfies regularity properties generalising the commutative regularity condition even though the centre might have singularities.  

We put
$$\Xscr_{\eJac_x}^\mathrm{tot}:=\bigsqcup_{\sigma\in\Z/n} \Xscr_{\eJac_x(\mathsf{w}(\sigma))},$$the \emph{total Jackson space}. This is a $\Z/n$-torsor via the $\Z/n$-action on $\eJac_x^\mathrm{tot}$. We can partition $\Xscr_{\eJac_x}^\mathrm{tot}$ with respect to subgroups of $\Z/n$ into sets (not disjoint of course).  

\subsubsection{The locus of one-dimensional points and its infinitesimal structure}\label{sec:point_locus_tangents} 
For illustration we will now study the rational one-dimensional points. Thus, for this section we let $k$ be \emph{any} field with a primitive $n$-th root of unity $\zeta:=\zeta_n$. Furthermore, we identify $1$-dimensional points, $1$-dimensional modules and their annihilator ideals. These points are maximal since $1$-dimensional (over $k$) modules are simple. 

Note first that any $1$-dimensional $\eJac_x$-module must lie on the ramification locus. Indeed, any module in $\azu(A)$ corresponds to a central simple algebra of dimension $n^2$.

Assume $\zeta^{2r}\neq 1$. Then the relations reduce to 
\begin{align*}
(1-\zeta^r)\eps_0\eps_1&=0\\
(1-\zeta^r)\eps_0\eps_2&=0\\
(1-\zeta^{2r})\eps_1\eps_2&=x\eps_0+x(1-\zeta^{2r}).
\end{align*}This implies that $x(\eps_0+(1-\zeta^{2r}))=0$ so, if $x\neq 0$, we have that $(\zeta^{2r}-1,0,0)$ is a one-dimensional module. In addition, the conic (hyperbola) $C_x$ given by $\eps_1\eps_2=x$ in the plane $\eps_0=0$ consists entirely of commutative points (i.e., one-dimensional modules). When $x=0$, we find that the one-dimensional modules lie on the union of the coordinate planes. 

\begin{remark}\label{rem:zeta2=1}When $\zeta^{2r}=1$ we may assume that $\zeta^r=-1$. The relations
$$\eps_0\eps_1+\eps_1\eps_0=0,\quad \eps_2\eps_0+\eps_0\eps_2=0,\quad \eps_2\eps_1-\eps_1\eps_2=x\eps_0$$imply that
$$2\eps_0\eps_1=0,\quad 2\eps_0\eps_2=0,\quad x\eps_0=0.$$Consider first when $\mathrm{char}(k)\neq 2$. When $x\neq 0$ we get that the plane $\{\eps_0=0\}$ consists entirely of one-dimensional points. On the other hand, when $x=0$, we get the union of the coordinate planes. 
Now, if $\mathrm{char}(k)=2$, we get, when $x\neq 0$, the plane $\{\eps_0=0\}$ and, if $x=0$, the whole $\mathbb{A}^3$. 
\end{remark}
%
%

We begin by working in $\eps_0=0$. Let $k\subseteq k'$ be a field extension and consider the $k'$-rational point 
$$\xi_{(a, b)}:= k'\cdot\ff, \quad\text{with}\quad \eps_1\cdot\ff=a\ff,\quad \eps_2\cdot\ff=b\ff, \quad a,b\in k',$$and similarly $\xi_{(u, v)}$. Then we have the following results.
\begin{prop}Put $\xibf:=\big\{\xi_{(a, b)},\xi_{(u, v)}\big\}$ be an \'etale $k'$-rational point on $\Xscr_{\eJac_x}$. Recall that $T_{\xibf}$ denotes the tangent space at $\xibf$.
\begin{itemize}
	\item[(i)] Suppose $x\neq 0$ and $\zeta^{2r}\neq 1$. Then, 
$$
	T_{\xibf}=\Ext^1_{\eJac_x}\!\!\big(\xi_{(a, b)},\xi_{(u, v)}\big)=\begin{cases} k,& \text{when $a=\zeta^r u$, and $v=\zeta^r b$}  \\
	k', & \text{when $a=\zeta^{2r}u$, and $v=\zeta^{2r}b$} \\
	k', & \text{when $a=u$, and $b=v$}.
	\end{cases}
$$	
In all other cases $\Ext^1_{\eJac_x}\!\!\big(\xi_{(a, b)},\xi_{(u, v)}\big)=0$. 
\item[(ii)] Suppose $x=0$ and $\zeta^{2r}\neq 1$. Then
$$
T_{\xibf}=\Ext^1_{\eJac_x}\!\!\big(\xi_{(a, b)},\xi_{(u, v)}\big)=\begin{cases}
k', & \text{when $a=u$, and $b=v$}\\
k', & \text{when $a=\zeta^{2r} u$, and $v=\zeta^{2r} b$}\\
(k')^2, & \text{if when $a=b=u=v=0$}\\
0, & \text{otherwise.}
\end{cases}
$$
\item[(iii)] Suppose $x\neq 0$ and $\zeta^{2r}= 1$. Then
$$
	T_{\xibf}=\Ext^1_{\eJac_x}\!\!\big(\xi_{(a, b)},\xi_{(u, v)}\big)=\begin{cases}
	k', & \text{when $a=u$, and $b=v$}\\
	0, & \text{otherwise.}
	\end{cases}
$$	
\item[(iv)] Suppose $x=0$ and $\zeta^{2r}=1$. Then
$$
	T_{\xibf}=\Ext^1_{\eJac_x}\!\!\big(\xi_{(a, b)},\xi_{(u, v)}\big)=\begin{cases}
	k', & \text{when $a=\pm u$, $b=\pm v$, $\mathrm{char}(k)\neq 2$ }\\
	(k')^3 & \text{when $a=u$, $b=v$, $\mathrm{char}(k)= 2$. }
	\end{cases}
$$	
\end{itemize}
\end{prop}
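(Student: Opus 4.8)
The plan is to compute the $\Ext^1$-groups directly from a projective resolution of the modules $\xi_{(a,b)}$ over the Jackson algebra $\eJac_x$, exploiting the fact that $\eJac_x$ is an iterated Ore extension (as stated in the proposition preceding Theorem~\ref{thm:rationalCM}) and hence has a small, explicit free resolution of any one-dimensional module. First I would recall that, since $\eJac_x$ is a skew polynomial ring in three variables $\eps_0,\eps_1,\eps_2$ over $\oo$ (here over the field $k$), every one-dimensional module $\xi_{(a,b)}$ sitting in the plane $\{\eps_0=0\}$ admits a Koszul-type free resolution of length $3$ built from the regular sequence $(\eps_0, \eps_1-a, \eps_2-b)$, twisted by the commutation relations. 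Concretely, $\Ext^1_{\eJac_x}(\xi_{(a,b)}, \xi_{(u,v)})$ is the first cohomology of the complex obtained by applying $\Hom_{\eJac_x}(-,\xi_{(u,v)})$ to this resolution; because the target is one-dimensional, this reduces to the cohomology of an explicit $3\times 3$ (or smaller) matrix over $k'$ whose entries are the ``differences'' $a-u$, $b-v$, together with the $\zeta^r$-twisted versions $a-\zeta^r u$, $v - \zeta^r b$, $a - \zeta^{2r}u$, $v-\zeta^{2r}b$ coming from the three defining relations $\eps_0\eps_1=\zeta^r\eps_1\eps_0$, $\eps_2\eps_0=\zeta^r\eps_0\eps_2$, $\eps_2\eps_1 - \zeta^{2r}\eps_1\eps_2 = x\eps_0 + x\omega$.

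The heart of the computation is organising the three relations into the differential of the resolution and then reading off when the rank of the relevant matrix drops. For case (i), with $x\neq 0$ and $\zeta^{2r}\neq 1$, I would use the reduced relations displayed just before the proposition: an extension of $\xi_{(a,b)}$ by $\xi_{(u,v)}$ is given by a choice of off-diagonal scalars in the $2\times 2$ upper-triangular representations of $\eps_0,\eps_1,\eps_2$, subject to the linearised relations. The relation $\eps_0\eps_1 = \zeta^r\eps_1\eps_0$ forces the $\eps_0$-entry to vanish unless $a=\zeta^r u$; symmetrically $\eps_2\eps_0=\zeta^r\eps_0\eps_2$ ties it to $v=\zeta^r b$; and the relation $(1-\zeta^{2r})\eps_1\eps_2 = x\eps_0 + x\omega$ links the $\eps_1$- and $\eps_2$-entries, collapsing to nonzero $\Ext^1$ exactly in the three listed scenarios (the ``diagonal'' $a=u, b=v$ giving self-extensions, the $\zeta^{2r}$-twisted locus, and the mixed $\zeta^r$-locus where the $\eps_0$-direction survives and pins the dimension to the prime field $k$ rather than $k'$). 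The distinction between getting $k$ and getting $k'$ is precisely whether the surviving extension parameter is forced to be Frobenius/Galois-fixed (the $\eps_0$-coupled case) or free over $k'$. Cases (ii)–(iv) are the degenerations: (ii) sets $x=0$ so the $x\eps_0$ coupling disappears and the origin $a=b=u=v=0$ becomes maximally degenerate, yielding $(k')^2$; (iii) and (iv) substitute $\zeta^{2r}=1$, where by Remark~\ref{rem:zeta2=1} the relations become (anti)commutators and one must split on $\mathrm{char}(k)=2$ versus not — in characteristic $2$ the algebra at $x=0$ degenerates all the way to the commutative polynomial ring $k[\eps_0,\eps_1,\eps_2]$, whose one-dimensional module at the origin has $\Ext^1 = (k')^3$.

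I would carry out the steps in this order: (1) write down the standard free resolution of $\xi_{(a,b)}$ coming from the Ore extension structure; (2) apply $\Hom(-,\xi_{(u,v)})$ and identify the resulting cochain complex with an explicit small complex of $k'$-vector spaces whose maps are linear in $a-u$, $b-v$ and the twisted analogues; (3) case-split on whether $\zeta^{2r}=1$, using Remark~\ref{rem:zeta2=1} and Remark~\ref{rem:J}(ii) for the degenerate presentations; (4) within each case, further split on $x=0$ versus $x\neq 0$ and (when $\zeta^{2r}=1$) on the characteristic, and in each branch compute the kernel-modulo-image to get the stated dimensions; (5) track carefully, in the $\eps_0$-coupled branch of (i), that the obstruction equations force the relevant parameter to lie in $k$ rather than $k'$, which accounts for the anomalous ``$k$'' entry. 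The main obstacle I anticipate is bookkeeping in step (2): correctly accounting for the noncommutativity when forming $\Hom_{\eJac_x}$ (left versus right module structures, and the twist by $\sigma$ that makes the Koszul differential non-symmetric), so that the twisted differences $a-\zeta^r u$ and $v-\zeta^r b$ appear on the correct sides and with the correct $\zeta$-powers; getting a sign or a twist wrong here would scramble which locus produces $k$ and which produces $k'$. Everything after that is linear algebra over $k'$, and the dimension counts fall out essentially by inspection once the complex is correct.
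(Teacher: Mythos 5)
The paper itself gives no proof of this proposition (it defers to \cite{LarssonKummerWitt}), so your proposal has to stand on its own, and as written it has a real gap at exactly the delicate entries of the table. Your general strategy is the right one and surely the intended ``rather simple'' computation: realise a class in $\Ext^1_{\eJac_x}\!\big(\xi_{(a,b)},\xi_{(u,v)}\big)$ as an upper-triangular $2\times 2$ representation with off-diagonal entries $(\alpha,\beta,\gamma)$ for $\eps_0,\eps_1,\eps_2$, impose the linearisations of the three defining relations, and quotient by the coboundaries $(0,s(u-a),s(v-b))$. Doing so gives the cocycle conditions $\alpha(a-\zeta^r u)=0$, $\alpha(v-\zeta^r b)=0$ and $\beta(v-\zeta^{2r}b)+\gamma(a-\zeta^{2r}u)=x\alpha$. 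But these are $k'$-linear equations, so the cocycles and coboundaries are $k'$-subspaces and $\Ext^1$ is automatically a $k'$-vector space; your proposed mechanism for the anomalous entry ``$k$'' in case (i) --- that the surviving parameter is ``forced to be Frobenius/Galois-fixed'' --- cannot occur in this setup. Indeed, on the locus $a=\zeta^r u$, $v=\zeta^r b$ (with $ab=uv=x\neq 0$) conditions one and two are vacuous, condition three is a single $k'$-linear relation, and the coboundaries are one-dimensional, so your computation yields a one-dimensional $k'$-space, not the subfield $k$. Either you must identify what extra structure the statement is invoking (for instance $\Ext$ over the $k$-form of the algebra rather than over $\eJac_x\otimes_k k'$), or concede that your method does not reproduce that entry; asserting agreement is not enough.

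The same issue recurs at the other surprising entries, which your sketch claims will ``fall out by inspection''. With $x=0$ and $a=b=u=v=0$ all three linearised relations vanish identically (products of strictly upper-triangular matrices are zero and the $x\alpha$ term is absent), so your procedure gives $(k')^3$, not the stated $(k')^2$; in case (iii) the self-extension space of a point of the plane $\{\eps_0=0\}$ comes out two-dimensional ($\alpha$ is killed by $x\alpha=0$, while $\beta,\gamma$ are unconstrained and the coboundaries vanish), not $k'$. So before claiming the proposition you need to actually run the linear algebra in every branch and then reconcile these outputs with the stated table (or with the computation in \cite{LarssonKummerWitt}), rather than assume the dimensions match. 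A smaller technical point: the claimed Koszul resolution ``built from the regular sequence $(\eps_0,\eps_1-a,\eps_2-b)$'' needs justification, since the translated generators are not normal elements of the iterated Ore extension at a general point; the $2\times 2$ extension computation avoids this entirely and is the version of your argument to trust.
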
 
\begin{proof}
The proof (which is rather simple) can be found, together with other information, in \cite{LarssonKummerWitt}. 
\end{proof}

From the above, one can construct the ring object $\boldsymbol{\OO}$ over families of $1$-dimensional modules. Even though the result will probably be hard to interpret I feel that it can be helpful to see what $\boldsymbol{\OO}$ actually look like in practice. 

We only write it out for families of two modules.

\begin{prop}Let the notation be as above. Modulo possible obstructions the versal bases for the deformations of $\xibf$ are the following. 
\begin{itemize}
	\item[(i)] Suppose $x\neq 0$ and $\zeta^{2r}\neq 1$. Then, for $j=1,2$, 
$$\hat\OO_{\xibf}=\begin{pmatrix}
	\End_{k'}(\xi_{(\zeta^{jr}u, b)})\otimes k'[[t_{11}]] & \!\!\!\!\!\!\!\!\Hom_{k'}(\xi_{(\zeta^{jr}u, b)},\xi_{(u, \zeta^{jr}b)})\otimes\langle t_{12}\rangle\\
	\Hom_{k'}(\xi_{(u, \zeta^{jr}b)},\xi_{(\zeta^{jr}u, b)})\otimes\langle t_{21}\rangle &\!\!\!\!\!\!\!\!\End_{k'}(\xi_{(\zeta^{jr}u, b)})\otimes {k'}[[t_{22}]]
\end{pmatrix},$$with algebraisation, 
$$\OO_{\xibf}=\begin{pmatrix}
	\End_{k'}(\xi_{(\zeta^{jr}u, b)})\otimes{k'}[t_{11}] & \!\!\!\!\!\!\!\!\Hom_{k'}(\xi_{(\zeta^{jr}u, b)},\xi_{(u, \zeta^{jr}b)})\otimes\langle t_{12}\rangle\\
	\Hom_{k'}(\xi_{(u, \zeta^{jr}b)},\xi_{(\zeta^{jr}u, b)})\otimes\langle t_{21}\rangle &\!\!\!\!\!\!\!\!\End_{k'}(\xi_{(\zeta^{jr}u, b)})\otimes {k'}[t_{22}]
\end{pmatrix},$$in all non-zero cases. In the other cases 
$$\hat\OO_{\xibf}=\OO_{\xibf}=\Hom_{k'}(\xi_{(a, b)},\xi_{(u, v)}).$$
\item[(ii)] Suppose $x=0$ and $\zeta^{2r}\neq 1$. Then, the first two rows combine to the case in (i), while the second is
$$\hat\OO_{\xi_{(0, 0)}}= \End_{k'}(\xi_{(0, 0)})\otimes {k'}\langle\!\langle t_1, t_2\rangle\!\rangle, \quad \OO_{\xi_{(0, 0)}}= \End_{k'}(\xi_{(0, 0)})\otimes {k'}\langle t_1, t_2\rangle.$$
\item[(iii)] Suppose $x\neq 0$ and $\zeta^{2r}= 1$. Then
$$
	\hat\OO_{\xi_{(a, b)}}=\End_{k'}(\xi_{(a,b)})\otimes {k'}[[t]],\quad \OO_{\xi_{(a, b)}}=\End_{k'}(\xi_{(a,b)})\otimes {k'}[t]
$$	and $\hat\OO_{\xibf}=\OO_{\xibf}=\Hom_{k'}(\xi_{(a, b)},\xi_{(u, v)})$ in all other cases. 
\item[(iv)] Suppose $x=0$ and $\zeta^{2r}=1$ and let $\xibf$ be the \'etale rational point $\xibf=\big\{\xi_{(a, b)}, \xi_{(-a,-b)}\big\}$. Then, when $\mathrm{char}(k)\neq 2$, 
$$
\hat\OO_{\xibf}=\begin{pmatrix}
	\End_{k'}(\xi_{(a, b)})\otimes {k'}[[t_{11}]] & \Hom_{k'}(\xi_{(a, b)}, \xi_{(-a, -b)})\otimes\langle t_{12}\rangle\\
	\Hom_{k'}(\xi_{(-a, -b)}, \xi_{(a, b)})\otimes \langle t_{21}\rangle &  \End_{k'}(\xi_{(-a, -b)})\otimes {k'}[[t_{22}]] 
\end{pmatrix}$$and when $\mathrm{char}(k)=2$, 
$$\hat\OO_{\xi_{(a,b)}}=\End_{k'}(\xi_{(a,b)})\otimes{k'}\langle\!\langle t_1, t_2, t_3\rangle\!\rangle,$$ both with obvious algebraisations. 
\end{itemize}Be sure to notice the situation when $\mathrm{char}(k)=2$. 
\end{prop}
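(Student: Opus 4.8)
The plan is to compute, in each of the four cases, the matric pro-representing hull $(\hat{H}_{ij})$ of the deformation functor of the family $\xibf$ directly from the $\Ext^1$-data given in the previous Proposition, together with the obstruction calculus (successive lifting and killing of obstructions) recalled in Section~\ref{sec:nc_space}, and then assemble $\hat\OO_{\xibf} = (\Hom_{k'}(\xi_i,\xi_j)\otimes_{k'}\hat H_{ij})$ and its algebraisation. The diagonal entries $\hat H_{ii}$ are quotients of the completed free algebra on $\dim_{k'}\Ext^1_{\eJac_x}(\xi_i,\xi_i)$ noncommuting variables; the off-diagonal entries $\hat H_{ij}$ ($i\neq j$) are the corresponding bimodules, free on $\dim_{k'}\Ext^1_{\eJac_x}(\xi_i,\xi_j)$ generators $t_{ij}$. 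Since all modules here are $1$-dimensional, each $\End_{k'}(\xi_i)$ and $\Hom_{k'}(\xi_i,\xi_j)$ is $1$-dimensional over $k'$, so the tensor factors are cosmetic and the whole content is in the power-series rings $\hat H_{ij}$.

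First I would dispose of the rigid off-diagonal pieces: whenever $\Ext^1_{\eJac_x}(\xi_{(a,b)},\xi_{(u,v)})=0$ the corresponding bimodule $\hat H_{ij}$ vanishes, and whenever $(a,b)=(u,v)$ the two modules coincide and no off-diagonal deformation parameter appears, so $\hat\OO_{\xibf}=\hat\OO_{\xi_{(a,b)}}=\Hom_{k'}(\xi_{(a,b)},\xi_{(u,v)})=\End_{k'}(\xi_{(a,b)})$; this already gives the ``in all other cases'' clauses of (i)--(iii). For the genuinely deformable diagonal entries I would argue that $\Ext^2_{\eJac_x}(\xi_i,\xi_i)$ contributes no obstruction, so that $\hat H_{ii}$ is the \emph{free} completed algebra on $\dim\Ext^1(\xi_i,\xi_i)$ variables; concretely, a $1$-dimensional module supported at a smooth point of the relevant commutative stratum has unobstructed self-deformations, while the extra self-extension in the case $x=0$, $\zeta^{2r}\ne1$, $(a,b)=(0,0)$ gives the two-variable \emph{noncommutative} hull $k'\langle\!\langle t_1,t_2\rangle\!\rangle$ of (ii) and the three-variable one $k'\langle\!\langle t_1,t_2,t_3\rangle\!\rangle$ of (iv) in characteristic~$2$, precisely matching the tangent-space dimensions $2$ and $3$ from the previous Proposition. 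For the mixed $2\times 2$ families in (i) and (iv, $\mathrm{char}\ne 2$) I would observe that each relevant off-diagonal $\Ext^1$ is $1$-dimensional, giving single generators $t_{12}$, $t_{21}$, and that the cup products $\Ext^1\otimes\Ext^1\to\Ext^2$ into the off-diagonal obstruction spaces vanish for degree reasons on these $1$-dimensional modules, so the bimodules are free on $\langle t_{12}\rangle$, $\langle t_{21}\rangle$ and the matrix $\hat\OO_{\xibf}$ is exactly the displayed $2\times2$ array; the algebraisation replaces each $k'[[t]]$ by $k'[t]$ since the hull is already a (completed) polynomial ring.

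The main obstacle is verifying the vanishing of the relevant obstruction maps, i.e.\ that the $\Ext^2$-groups receiving the Massey/cup products from these $\Ext^1$-classes either vanish or the products land in zero. For a PI-algebra finite over its centre one can use M\"uller's theorem and the fibre-wise Cohen--Macaulay/Auslander-regular structure of $\eJac_x$ from Theorem~\ref{prop:centre_J}: a $1$-dimensional simple lying on a smooth point of its central stratum has projective dimension equal to the codimension, which bounds $\Ext^2$ and forces the formal moduli to be a power-series ring on the tangent space. Concretely I expect that in every case the only surviving obstruction spaces are zero, so each $\hat H_{ij}$ is freely generated — the ``modulo possible obstructions'' hedge in the statement is exactly the acknowledgement that one is reading off the hull from $T_{\xibf}$ under this (here verifiable) unobstructedness. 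Once that is in hand, the four displayed formulas, including the characteristic-$2$ anomalies, follow by bookkeeping.
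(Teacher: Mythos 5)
Your first step---reading the matrix $\hat\OO_{\xibf}=\big(\Hom_{k'}(\xi_i,\xi_j)\otimes_{k'}\hat H_{ij}\big)$ off the tangent-space proposition, with each $\hat H_{ij}$ generated freely by a basis of the corresponding $\Ext^1$, and collapsing to $\Hom_{k'}(\xi_{(a,b)},\xi_{(u,v)})$ whenever the relevant $\Ext^1$ vanish---is exactly what the paper intends: it gives no proof at all, because with the hedge ``modulo possible obstructions'' the proposition is nothing more than this bookkeeping applied to the construction of $\hat\OO_{\fam{M}}$ from section \ref{sec:nc_space}. Up to that point your proposal coincides with the (implicit) argument of the paper.

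The genuine problem is your resolution of what you call the main obstacle. The statement does not assert, and the paper deliberately does not claim, that the obstruction maps vanish; your assertion that this unobstructedness is ``here verifiable'' is false in several of the listed cases. Over a three-dimensional Auslander-regular algebra the groups $\Ext^2$ between one-dimensional simples are typically nonzero, and the cup/Massey products are precisely what cut the hull down from the free object. For instance, in case (iv) with $\mathrm{char}(k)=2$ and $x=0$ the algebra is the commutative polynomial ring in three variables, and the hull of a point module is $k'[[t_1,t_2,t_3]]$, i.e.\ the free algebra $k'\langle\!\langle t_1,t_2,t_3\rangle\!\rangle$ divided by the commutator obstructions---not the free algebra itself; similarly in case (ii) the hull at $\xi_{(0,0)}$ on the quantum affine stratum acquires a $\zeta$-commutation relation between $t_1$ and $t_2$. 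Smoothness of the point on its central stratum, Auslander-regularity, Cohen--Macaulayness and M\"uller's theorem give no such vanishing: even a smooth commutative point has $\Ext^2\neq 0$ in this matric deformation theory, and the obstruction is exactly what restores ($q$-)commutativity; the ``for degree reasons'' vanishing of the off-diagonal products is likewise unsubstantiated. So drop the unobstructedness claim: it is not needed for the statement as hedged, and as stated it is wrong---which is precisely why the proposition is phrased ``modulo possible obstructions'' in the first place.
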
 
\section{Ramification}\label{sec:ram}

We will separate the discussion into two cases: (1) the geometric case, and (2) the arithmetic case. The geometric case is essentially trivial. 
\subsection{The geometric case}
This is the case when the base $\oo$ is an algebra over a field $k$, such that $\zeta_n\in k$. The discussion below applies to both $\eJac_x$ and $\eKW_x$, but we will only write it out for $\eJac_x$. 

Assume $\oo=k[b_1, b_2, \dots, b_m]/I$. Hence
$$A=\oo[\sqrt[n]{x}]=\oo[t]/(t^n-x)=k[b_1, b_2,\dots b_m, t]/(I, t^n-x), \quad x\in \oo.$$The zero-locus of $x$ is the branch locus, $\branch(A_{/\oo})$, of $A_{/\oo}$. Over $\pp\notin\branch(A_{/\oo})$ the cover $\Spec(A)\to \Spec(\oo)$ is \'etale (and in particular unramified) and so the fibre over $\pp$ is
$$A\otimes_\oo k(\pp) = \prod_{i=1}^\ell k_i, \quad 1\leq \ell\leq \deg(A_{/\oo}),$$ and where each $k_i$ a finite field extension of $k(p)$. The group $\Z/n$ acts transitively on the points of the fibre. Therefore we can construct the algebras $\eJac_x$ (and $\eKW_x$) fibre-wise, but that is not what we are going to do. 

Since $n$ is invertible on the base the only interesting case of ramification is when $\pp\mid x$, i.e., when $\pp\in\branch(A_{/\oo})$. Therefore, 
\begin{prop}Let $\oo$ and $A$ be as above. Then
\begin{itemize}
	\item[(i)] if $\pp\notin\branch(A_{/\oo})$, the reduction of $\eJac_x$ modulo $\pp$ gives a generic (i.e., $x\neq 0$) $\eJac_{x, /\pp}$;
	\item[(ii)] if $\pp\in\branch(A_{/\oo})$, the reduction of $\eJac_x$ modulo $\pp$ gives a quantum affine algebra $\eJac_{0, /\pp}$. 
\end{itemize}
Hence, the ramification properties of $A_{/\oo}$ determines the structure of the algebras $\eJac_x$ fibre-wise. The converse is also true in the geometric case: the structure of $\eJac_x$ can be used to read off the ramification properties of $A_{/\oo}$ fibre-wise.  
\end{prop}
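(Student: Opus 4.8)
The plan is to verify the two cases by directly reducing the defining relations of $\eJac_x$ modulo $\pp$ and invoking the Kummer-extension picture set up above. Recall that in the geometric case $n\in\oo^\times$ (since $\zeta_n\in k$ forces $n$ invertible), so the reduction of $\zeta$ modulo $\pp$ is again a primitive $n$-th root of unity $\bar\zeta$, and the reduction of $\zeta^r$ is primitive as well (this is what makes the ``generic'' shape stable under reduction). First I would recall from the definition \eqref{eq:J} that the relevant relation controlling ramification is the third one, $\eps_2\eps_1-\zeta^{2r}\eps_1\eps_2-x\eps_0-x\omega$; the other two relations reduce without incident. The entire dichotomy is therefore governed by whether $\bar x=0$ or $\bar x\neq 0$ in $k(\pp)$, i.e.\ by whether $\pp\mid x$, which by Remark \ref{rem:ramification} is precisely the condition $\pp\in\branch(A_{/\oo})$.

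For part (i): if $\pp\notin\branch(A_{/\oo})$ then $\bar x\neq 0$, so all three reduced relations have the same form as over $\oo$ with $\zeta$ replaced by $\bar\zeta$; hence $\eJac_{x,/\pp}$ is again of generic type, which is case (3) in the spirit of Proposition \ref{prop:reductionKW}. For part (ii): if $\pp\in\branch(A_{/\oo})$ then $\bar x=0$, the terms $x\eps_0$ and $x\omega$ vanish, and the reduced relations become $\eps_0\eps_1=\bar\zeta^r\eps_1\eps_0$, $\eps_2\eps_0=\bar\zeta^r\eps_0\eps_2$, $\eps_2\eps_1=\bar\zeta^{2r}\eps_1\eps_2$ — exactly the presentation of a quantum affine three-space, matching case (2) of Proposition \ref{prop:reductionKW} and the $x=0$ case $\eJac_{0,/\pp}$ of Theorem \ref{prop:centre_J}(i). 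In both parts I would note that $\bar\zeta\neq 1$ since $\zeta\in\oo^\times$ and $\zeta_n\in k$, so the purely commutative degeneration of Proposition \ref{prop:reductionKW}(1) never occurs here — this is the geometric-case simplification alluded to in the text.

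For the converse assertion: conversely, one reads off the ramification of $A_{/\oo}$ at $\pp$ from $\eJac_{x,/\pp}$ by detecting the image $\bar x$, e.g.\ via the centre. By Theorem \ref{prop:centre_J}(i) versus (ii) the centre of $\eJac_{0,/\pp}$ and that of a generic $\eJac_{x,/\pp}$ have visibly different shapes (in the $x=0$ case $\cent$ contains the monomials $\eps_0^a\eps_1^b\eps_2^c$ with $r(a+2b)\equiv r(b-c)\equiv 0$, and the non-commutativity of $\eJac_{0,/\pp}$ forces the coordinate-hyperplane structure of its one-dimensional locus computed in Section \ref{sec:point_locus_tangents}), whereas in the generic case the extra relation $x\eps_0+x\omega$ is non-trivial. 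Thus the isomorphism type of $\eJac_{x,/\pp}$ — equivalently, whether its one-dimensional point locus is the union of coordinate planes or the hyperbola $C_x$ together with the isolated point $(\zeta^{2r}-1,0,0)$ — determines whether $\pp\in\branch(A_{/\oo})$.

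I expect the only mild subtlety to be the bookkeeping around the normalisation $\omega=1-\zeta^{2r}$ when $\zeta^{2r}=1$: there the passage between $S$ and $\eJac_x$ is not valid over $\oo$ (one needs $\omega^{-1}$), and the relations degenerate further as in Remark \ref{rem:J}(ii). In that degenerate sub-case $\branch(A_{/\oo})$ still detects $\bar x$ through the relation $x\eps_0=0$ in characteristic $\neq 2$ (Remark \ref{rem:zeta2=1}), so the statement survives; I would simply flag that when $\zeta^{2r}=1$ one should argue with the relations of $\eKW_x$ directly rather than through $\eJac_x$. That case distinction is the one place where the argument is not completely mechanical.
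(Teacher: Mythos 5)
Your proposal is correct and follows essentially the route the paper intends: the paper leaves this proposition without an explicit proof, treating it as immediate from reducing the defining relations modulo $\pp$ exactly as in Proposition \ref{prop:reductionKW} and the arithmetic-case proof of Proposition \ref{prop:arithmJxram}, with the dichotomy governed by whether $\bar x=0$, i.e.\ whether $\pp\in\branch(A_{/\oo})$. Your extra remarks (distinguishing the generic and quantum-affine fibres via the centre and the one-dimensional point locus for the converse, and flagging the degenerate case $\zeta^{2r}=1$) are sound and, if anything, more careful than the paper's implicit argument.
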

\subsection{The arithmetic case}
In a sense the geometric case is simple enough, it only comes down to the fact that Kummer extensions have simple properties when $n$ is invertible on all fibres. 

The arithmetic case is not so straightforward since the fibres have characteristic that can divide $n$. Let $\oo_K$ be the ring of integers in a number field $K$ such that $\zeta_n\in \oo_K$. 

Consequently, $A=\oo_K[\sqrt[n]{x}]$,  $x\in\oo_K$.
The branch locus is still the zero-locus of $x$, but now there are more possible types of ramification. In this paper we will only consider ramification at closed points, so ramification coming from inseparability (or imperfectness) of the residue class rings, will not be an issue. 

Assume now that $p\mid n$, say $n=p^km$. Then, $\zeta^n=1\iff (\zeta_n^m-1)^{p^k}=0$, implying that $\zeta^m=1$ for $m<n$. Hence there are no, non-trivial, primitive $n$-th roots of unity in this case.

\begin{prop}\label{prop:arithmJxram}Let $A=\oo_K[\sqrt[n]{x}]$,  with $x\in\oo_K$, and let $\pp\in\Spec(\oo_K)$. 
\begin{itemize}
	\item[(i)] If $\pp\nmid n$ and $\pp\nmid x$, the extension is unramified and $\Xscr_{\eJac_x}\otimes k(\pp)$ is generic;
	\item[(ii)] if $\pp\nmid n$ and $\pp\mid x$, the extension is tamely ramified (total if $\pp^2\nmid x$) and $\Xscr_{\eJac_0}\otimes k(\pp)$ is a quantum affine space;
	\item[(iii)] if $\pp\mid n$ and $\pp\nmid x$, the extension is ramified and
	 $$\Xscr_{\eJac_x}\otimes k(\pp)=\Xscr_{\mathbf{A}^1_{k(\pp)}},$$where 
	 $$\mathbf{A}^1_{k(\pp)}:=\frac{k(\pp)[u]\langle t, \partial\rangle}{(\partial t-t\partial - u)},$$ is the first Weyl algebra over the affine line, which in characteristic $p$, is Azumaya, and, finally
	\item[(iv)] if $\pp\mid n$ and $\pp\mid x$, the extension is ramified and $\Xscr_{\eJac_x}\otimes k(\pp)$ is the affine three-space.
\end{itemize}
\end{prop}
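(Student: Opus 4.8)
The plan is to analyze the reduction $\eJac_x \otimes_{\oo_K} k(\pp)$ case by case, using the explicit presentation \eqref{eq:J} together with the observation (made just before the statement) that when $\pp\mid n$ there is no nontrivial primitive $n$-th root of unity, so $\bar\zeta^r$ reduces to a value with $\bar\zeta^{r}$ a root of unity of order dividing the prime-to-$p$ part of $n$; in the extreme relevant to (iii)--(iv) we may in fact arrange $\bar\zeta^r = 1$ after reduction. First I would handle (i) and (ii), which are essentially the geometric situation already treated in Proposition~\ref{prop:arithmJxram}'s predecessor: when $\pp\nmid n$, reduction of $\zeta$ stays primitive, and one invokes Proposition~\ref{prop:reductionKW} (transported to $\eJac_x$ via the isomorphism $S\cong\eJac_x$ over $\oo[\omega^{-1}]$, noting $\omega$ is a unit mod $\pp$ since $\pp\nmid n$): $\pp\nmid x$ gives the generic algebra, $\pp\mid x$ gives $\bar x = 0$ hence the quantum affine space, with tameness/totality of the ramification of $A/\oo_K$ being the classical statement for Kummer extensions of prime-to-residue-characteristic degree.

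For (iii), the key point is that $\pp\mid n$ forces $\bar\zeta$ (hence $\bar\zeta^r$ and $\bar\zeta^{2r}$) to collapse: writing $n = p^k m$, the identity $(\bar\zeta^m - 1)^{p^k} = 0$ in the (reduced) fibre ring forces $\bar\zeta^m = 1$, and since $\zeta$ was a primitive $n$-th root one gets $\bar\zeta^{r}$ of order dividing $m$; in the situation where $m=1$ (or more generally after passing to the relevant piece) $\bar\zeta^r = 1$. Then the first two relations in $J$ become commutators $\bar\eps_0\bar\eps_1 = \bar\eps_1\bar\eps_0$ and $\bar\eps_2\bar\eps_0 = \bar\eps_0\bar\eps_2$, so $\bar\eps_0$ is central, while the third relation degenerates: with $\bar\zeta^{2r}=1$ and $\bar\omega = (1-\bar\zeta^{2r}) = 0$ it reads $\bar\eps_2\bar\eps_1 - \bar\eps_1\bar\eps_2 = \bar x\,\bar\eps_0$. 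Renaming $\bar\eps_0 = u$ (central), $\bar\eps_1 = t$, $\bar\eps_2 = \partial$ and rescaling by the unit $\bar x$ (here $\pp\nmid x$, so $\bar x\in k(\pp)^\times$), this is exactly the relation $\partial t - t\partial = u$ presenting $\mathbf{A}^1_{k(\pp)}$ over $k(\pp)[u]$. So $\Xscr_{\eJac_x}\otimes k(\pp) = \Mod\big(\mathbf{A}^1_{k(\pp)}\big) = \Xscr_{\mathbf{A}^1_{k(\pp)}}$; that this Weyl algebra is Azumaya over its centre $k(\pp)[u,t^p,\partial^p]$ in characteristic $p$ is the standard fact about the Weyl algebra in positive characteristic (its $p$-centre), which I would simply cite. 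Ramifiedness of $A/\oo_K$ in this case is the classical statement that adjoining an $n$-th root is (wildly) ramified at primes dividing $n$.

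For (iv), both $\pp\mid n$ and $\pp\mid x$ hold, so in addition to $\bar\zeta^r = 1$ (so all three relations are commutators up to the $\bar x$-term) we now have $\bar x = 0$, killing the last term entirely: $\bar\eps_2\bar\eps_1 - \bar\eps_1\bar\eps_2 = 0$. Hence all three generators commute and $\eJac_x\otimes k(\pp) \cong k(\pp)[\bar\eps_0,\bar\eps_1,\bar\eps_2]$, the affine three-space, giving $\Xscr_{\eJac_x}\otimes k(\pp) = \Xscr_{\mathbb{A}^3_{k(\pp)}}$; again $A/\oo_K$ is ramified at $\pp\mid n$.

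The main obstacle I anticipate is not the algebra manipulations, which are routine once one has the presentation, but rather being careful about the reduction behaviour of $\zeta$ when $m > 1$, i.e. when $n$ has a prime-to-$p$ part: there $\bar\zeta^r$ need not be $1$ but only of order dividing $m$, so strictly speaking (iii)--(iv) should either be stated under the hypothesis that drives $\bar\zeta^r\to 1$ (e.g. $n$ a power of $p$, or working over the appropriate factor), or one must track the residual $\bar\zeta^r$-twist and observe that the third relation still produces a Weyl-type algebra over the centre of the resulting quantum piece. I would make this hypothesis explicit (the paper's running assumption $p\geq 3$ plus the context of $n = p$ in the applications makes $m=1$ the case of interest) and remark that the general case follows by combining the present analysis with Proposition~\ref{prop:reductionKW}.
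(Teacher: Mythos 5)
Your proposal is correct and follows essentially the same route as the paper's (very brief) proof: reduce the defining relations of $\eJac_x$ modulo $\pp$, observe that $\bar\zeta$ stays primitive when $\pp\nmid n$ (giving the generic case for $\bar x\neq 0$ and the quantum affine space for $\bar x=0$) and collapses when $\pp\mid n$ (giving the Weyl-algebra relation $\eps_2\eps_1-\eps_1\eps_2=\bar x\eps_0$ for $\pp\nmid x$ and the polynomial ring for $\pp\mid x$). You are in fact somewhat more careful than the paper, which simply asserts that every $\zeta_n$ reduces to $1$ when $\pp\mid n$ and does not discuss the prime-to-$p$ part $m$ of $n$ that you flag; in the intended applications $n=p$, so your caveat is consistent with the paper's usage.
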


Observe that we cannot distinguish the ramification types (tame/wild) in (iii) and (iv). On the other hand, the partition of $\Xscr_{\eJac_x}^\mathrm{tot}$ with respect to subgroups of $\Z/n$ mentioned before can clearly be used with the ramification groups. However, the result will obviously not be visible in the structure of the spaces $\Xscr_{\eJac_x}$ themselves. Nevertheless, maybe studying the relation (via non-commutative ``morphisms'' between the elements in the partitions) might be used in some way.  

\begin{proof}If $\pp\nmid n$ and $\pp\nmid x$ the extension is unramified and $\eJac_{x/\pp}$ is a generic Jackson algebra since all $n$-the roots of unity are in $k\big(\pp)$. On the other hand, if $\pp\nmid n$ and $\pp\mid x$, the ramification is tame and we end up in the affine quantum algebra case $\eJac_{0/\pp}$. If, in addition $\pp^2\nmid x$, the ramification is total. If $\pp\mid n$, then every $\zeta_n$ reduces to $1$ modulo $\pp$ so get two the cases in (iii) and (iv).
\end{proof}

\section{Torsion points and elliptic Jackson spaces }\label{sec:torsionelliptic}

We will now construct the algebra $\eJac_x$ from the torsion field of elliptic curves with a $K$-rational torsion point and show that $\eJac_x$ encodes the reduction properties of the elliptic curve. 

Let $K$  be a local field and adjoin $\zeta_p$, a primitive $p$-th root of unity for $p$ a prime and put $L:=K(\zeta_p)$. Let $E$ be an elliptic curve over $K$ with a (non-trivial) $K$-rational $p$-torsion point. Then $K(E[p])/L$ is a Kummer extension of degree $p$ or $1$, with $K(E[p])$ the field generated by the $p$-torsion points of $E$. Hence, $K(E[p])=L[\sqrt[p]{x}]=L[t]/(t^p-x)$, for some $x\in L$, or $K(E[p])=L$. See \cite{Yasuda_Kummer}, for instance. We can, and will, assume that $x\in \oo_{L}$. In order to achieve this we might need to multiply $x$ with a unit (clearing denominators).  

The criterion of N\'eron--Ogg--Shafarevich (NOS) gives that if $E$ has good reduction at $\ell\neq p$, $\ell\in\Spec(\Z)$, then $K(E[p])/K$ is unramified at $\ell$. We will need the following generalisation due to M. Kida. We will denote the reduction types of elliptic curves with the corresponding Kodaira symbols.
\begin{thm}\label{thm:Kida}Let $E$ be an elliptic curve over a local field $K$, finite over $\Q_\ell$. Put $n:=-v(j(E))$, where $v$ is the valuation of $K$.  
	\begin{itemize}
		\item[(a)] If $p\geq 3$, $K(E[p])/K$ is unramified at the place of $K$ over $\ell$, if and only if 
		$$\mathrm{Type}(E) = \begin{cases}
		I_0, &\text{or}\\
		I_n, &\text{if $p\mid n$}.
		\end{cases}$$In other words, $K(E[p])/K$ is unramified if and only if $E$ has semistable reduction (over $K$). 
		\item[(b)] If $p=2$, $K(E[2])/K$ is unramified if and only if 
		$$\mathrm{Type}(E) = \begin{cases}
		I_0, &\text{or}\\
		I_n, &\text{if $n$ even, or}\\ 
		I^\ast_0\text{ or } I^\ast_n, &\text{with $n$ odd.}
		\end{cases}$$
	\end{itemize}
\end{thm}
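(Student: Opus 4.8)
This is not a result of the present paper; it is quoted from Kida's work on the ramification of division fields. Accordingly, what follows is a description of how one proves such a statement, i.e., the structure of Kida's argument, rather than a self-contained proof.

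\textbf{The plan.} The key input is the theory of the N\'eron model of $E$ over $\oo_K$ and the Galois action on torsion. One works over the completion, so $K$ is a complete discretely valued field with perfect (indeed finite) residue field $k$ of characteristic $\ell$. The first step is to pass to the semistable case: by the semistable reduction theorem there is a finite extension over which $E$ acquires semistable reduction, and the whole point is to detect whether that extension can be taken unramified, equivalently whether $K(E[p])/K$ is already unramified. So I would first dispose of the case of semistable reduction ($\mathrm{Type}(E)=I_0$ or $I_n$): if $E$ has good reduction ($I_0$) one invokes the criterion of N\'eron--Ogg--Shafarevich directly, so $K(E[p])/K$ is unramified for every $p$. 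If $E$ has multiplicative reduction ($I_n$, so $n=-v(j(E))>0$), one uses the Tate uniformization $E(\bar K)\cong \bar K^\times/q^{\Z}$ with $v(q)=n$; then $E[p]$ is generated by $\zeta_p$ and by a chosen $p$-th root $q^{1/p}$, and since $\zeta_p\in K$ is automatic here when $\ell\ne p$ (as $\mu_p\subset K^{\mathrm{ur}}$ and in fact the hypothesis gives a rational point forcing this), the extension $K(E[p])/K=K(q^{1/p})/K$ is unramified precisely when $p\mid v(q)=n$ — this is a plain Kummer-theory/ramification-of-radical-extensions computation. That yields the ``if'' direction and identifies the content of case (a).

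\textbf{The potentially-good case and the harder direction.} For the converse one must show that all the additive, potentially-good types ($II, III, IV, I_0^\ast, IV^\ast, III^\ast, II^\ast$) force ramification of $K(E[p])/K$ when $p\ge 3$. Here the tool is the action of inertia $I_K$ on the Tate module: when $E$ has potentially good reduction, $I_K$ acts on $V_p(E)$ through a finite quotient $\Phi$, and the order $e$ of this quotient (equivalently the minimal degree of a field extension giving good reduction) is computed from the reduction type: $e\in\{1,2,3,4,6\}$ with $e=2$ for $I_0^\ast$, $e=3$ for $IV,IV^\ast$, $e=4$ for $III,III^\ast$, $e=6$ for $II,II^\ast$ (the Ogg--Saito / N\'eron tables, valid since $\ell\ge 5$; the cases $\ell=2,3$ need the extra care Kida supplies). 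One then argues that for $p\ge 3$ the mod-$p$ reduction $I_K\to\mathrm{Aut}(E[p])\cong GL_2(\F_p)$ still has image of order divisible by a prime dividing $e$: the point is that the representation of the tame quotient of $I_K$ on $T_p(E)$ is faithful on the prime-to-$\ell$ part, and reduction mod $p$ does not kill it because the relevant cyclic group of order $e$ injects into $GL_2(\F_p)$ whenever $p\ge 3$ (one checks $2,3,4,6$ each divide $|GL_2(\F_p)|$ and the corresponding automorphism of the formal/N\'eron special fibre has order prime to $p$, using $p\ge 3$ to separate $p$ from $e$). Hence inertia acts nontrivially mod $p$ and $K(E[p])/K$ is ramified — the contrapositive of what we want. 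The case $p=2$ is genuinely different because $e$ can be a power of $2$ and $2=\ell$ is possible, which is why part (b) has the enlarged list including $I_0^\ast, I_n^\ast$; the proof there is a separate, more delicate analysis of the $2$-torsion (equivalently of the $2$-division polynomial and the special fibre of the N\'eron model), and I would treat it by the explicit classification of $E[2]$-field extensions via the roots of the cubic, matched against the Kodaira/conductor data.

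\textbf{Main obstacle.} The genuinely hard part is not the ``if'' direction (Tate curves and NOS handle it) but controlling what survives after reduction mod $p$: showing that the finite inertia image on $T_p(E)$ injects into $\mathrm{Aut}(E[p])$ on the relevant cyclic subgroup. This is where the hypothesis $p\ge 3$ is essential and where the small residue characteristics $\ell=2,3$ (bad behaviour of the N\'eron model, wild ramification, the tables of Ogg--Saito needing refinement) force case-by-case work; Kida's contribution is precisely the careful bookkeeping making this uniform, and reproducing it would require the full machinery of N\'eron models and the conductor--discriminant formalism rather than the soft non-commutative methods of the present paper. For that reason I would, in the paper itself, simply cite Kida and use the statement as a black box, exactly as the surrounding text does.
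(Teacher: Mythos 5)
Your proposal takes the same route as the paper: the paper's entire proof of Theorem \ref{thm:Kida} is the citation ``See \cite[Theorem 1.1]{Kida_Ramification_Division_fields}'', and you likewise identify the statement as Kida's result to be quoted, with your outline of his argument offered only as supplementary context. That is consistent with the paper; just note, if you keep the sketch, that in the multiplicative case one only needs $K(\zeta_p)/K$ unramified (not $\zeta_p\in K$), and that the converse for part (a) must also handle the potentially multiplicative types $I_n^\ast$, not only the potentially good ones.
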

\begin{proof}
See \cite[Theorem 1.1]{Kida_Ramification_Division_fields}
\end{proof}
 
 The following lemma holds also when $K$ i a number field.
\begin{lem}\label{lem:Cohen_cyclotomic}
A prime $\pp$ in $K$ is ramified in $L=K(\zeta_p)$ if and only if $\pp\mid p$ and $(p-1)\nmid e(\pp)$, where $e(\pp)$ is the absolute ramification index of $\pp$. In particular, if $e(\pp)=1$ and $p\geq 3$, $\pp$ is ramified if and only if $\pp\mid p$. If $p=2$, $\pp$ is ramified if and only if $\pp\mid p$ (regardless of $e(\pp)$). 
\end{lem}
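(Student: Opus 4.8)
The plan is to deduce this from the standard ramification theory of cyclotomic extensions, which I would phrase directly in terms of the different/discriminant of $L/K$. First I would recall the local structure: after localising at $\pp$ and passing to completions, $L_\pp = K_\pp(\zeta_p)$, and $L/K$ is unramified at $\pp$ whenever $\pp \nmid p$, since $\Z[\zeta_p]$ (hence $\oo_K[\zeta_p]$) is unramified over $\Z$ away from $p$ — the relevant discriminant divides a power of $p$. This immediately handles one direction: if $\pp \nmid p$ then $\pp$ is unramified, so a ramified $\pp$ must divide $p$.

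Next, assume $\pp \mid p$. Here the key input is that $K_\pp(\zeta_p)/K_\pp$ is a subextension of the totally ramified extension $\Q_p(\zeta_p)/\Q_p$, which has degree $p-1$ and ramification index $p-1$ (the prime $p$ is totally ramified with $(1-\zeta_p)^{p-1}$ generating $(p)$ up to a unit). I would compute the ramification index $e(L_\pp/K_\pp)$ by comparing the ramification filtrations: writing $e(\pp) = e(K_\pp/\Q_p)$ for the absolute ramification index, the extension $K_\pp/\Q_p$ and the extension $\Q_p(\zeta_p)/\Q_p$ are both inside $\bar\Q_p$, and $L_\pp = K_\pp \cdot \Q_p(\zeta_p)$ is their compositum. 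Since $\Q_p(\zeta_p)/\Q_p$ is totally (and tamely-up-to-the-wild-part) ramified with the inertia group cyclic of order $p-1$ acting through the cyclotomic character, the ramification index of the compositum over $K_\pp$ is $(p-1)/\gcd(p-1, e(\pp))$ — this is the standard fact that adjoining a tamely ramified piece of degree $p-1$ to a field with absolute ramification index $e(\pp)$ reduces the new ramification by the gcd. Hence $L/K$ is unramified at $\pp$ (i.e. $e(L_\pp/K_\pp)=1$) if and only if $(p-1) \mid e(\pp)$, which is exactly the stated criterion: $\pp$ is ramified iff $\pp \mid p$ and $(p-1)\nmid e(\pp)$.

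The two special cases then follow by specialisation. If $e(\pp)=1$, then $(p-1)\mid e(\pp)$ forces $p-1 \mid 1$, i.e. $p=2$; so for $p\geq 3$ with $e(\pp)=1$ the condition $(p-1)\nmid e(\pp)$ is automatic and $\pp$ is ramified precisely when $\pp\mid p$. If $p=2$, then $p-1 = 1$ divides every $e(\pp)$, so the extra condition $(p-1)\nmid e(\pp)$ is never satisfied; thus for $p=2$ the extension $L=K(\zeta_2)=K$ is trivial and there is nothing ramified — more precisely the statement as written is vacuously consistent since $K(\zeta_2)=K$. (One should note $\zeta_2=-1\in K$ always, so the $p=2$ clause is really a degenerate boundary case.)

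The main obstacle I anticipate is the precise bookkeeping in the compositum step: justifying that $e(L_\pp/K_\pp) = (p-1)/\gcd(p-1,e(\pp))$ rather than merely a divisor of it. This requires knowing that the higher ramification filtration of $\Q_p(\zeta_p)/\Q_p$ interacts cleanly with base change to $K_\pp$, i.e. that the wild part (which is trivial here since $[\Q_p(\zeta_p):\Q_p]=p-1$ is prime to $p$) does not interfere, so the extension is tame and the gcd formula for tame compositums applies verbatim. I would lean on the classical reference (for instance the cited work, or Serre's \emph{Local Fields}, Chapter IV) for this tame-compositum computation rather than reprove it, since it is entirely standard once the tameness of $\Q_p(\zeta_p)/\Q_p$ is observed.
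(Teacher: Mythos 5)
Your argument is sound for $p\geq 3$, but note that it is genuinely different from what the paper does: the paper offers no proof at all, simply citing \cite[Theorem 1.1(1)]{Cohen_et_al_Cyclotomic}, so your proposal replaces a black-box reference with an actual derivation. Your route --- unramifiedness away from $p$ because the relative discriminant divides a power of $p$ (equivalently, $X^{p-1}+\cdots+1$ stays separable modulo $\pp$), and, at $\pp\mid p$, writing $K_\pp(\zeta_p)=K_\pp\cdot\Q_p(\zeta_p)$ with $\Q_p(\zeta_p)/\Q_p$ totally and tamely ramified of degree $p-1$ and invoking the tame-compositum formula $e\big(K_\pp(\zeta_p)/K_\pp\big)=(p-1)/\gcd(p-1,e(\pp))$ --- is correct; the gcd formula is exactly Abhyankar's lemma for tame extensions, and the direction that actually needs it (``$(p-1)\mid e(\pp)$ implies unramified'') can be made explicit via the Kummer generator $\Q_p(\zeta_p)=\Q_p\big((-p)^{1/(p-1)}\big)$: if $(p-1)\mid e(\pp)$, then $-p=\pi^{e(\pp)}u$ with $u$ a unit and $\mu_{p-1}\subset\Q_p\subseteq K_\pp$, so $K_\pp(\zeta_p)\subseteq K_\pp\big(u^{1/(p-1)}\big)$, which is unramified, while the converse follows from the value-group lower bound you implicitly use. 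What your approach buys is transparency about where the condition $(p-1)\mid e(\pp)$ comes from; what the citation buys is the statement in exactly the form (and with the conventions) of the source, including its $p=2$ clause. On that last point you are too charitable: for $p=2$ one has $K(\zeta_2)=K$, so no prime ramifies, and your criterion correctly yields ``never ramified'' --- this \emph{contradicts} the lemma's final sentence as literally written rather than being ``vacuously consistent'' with it (the cited theorem presumably concerns $\zeta_4$ or $\zeta_{2^k}$ when $p=2$). Since the paper only invokes the lemma for $p\geq 3$ (indeed with $e(\qq)=1$), this is harmless, but your write-up should say plainly that the $p=2$ clause is not established by, and does not follow from, your argument.
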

\begin{proof}
See \cite[Theorem 1.1(1)]{Cohen_et_al_Cyclotomic}.
\end{proof}

Put $F:=K(E[p])$ and consider the following tower of extensions:

$$\xymatrix{F & \ar@{..}[l]\qq_F\\
\ar@{-}@/^1pc/[u]^{\text{deg $\,=\, p$ or $1$}}L\ar@{-}[u] &\ar@{..}[l]\qq\ar@{-}[u]\\
\ar@{-}@/^1pc/[u]^{\text{deg} \,=\, p-1}K\ar@{-}[u] & \ar@{..}[l]\qq_K\ar@{-}[u]\\
\Q\ar@{-}[u]& \ar@{..}[l]\ell\ar@{-}[u]}$$Throughout, we put $k:=k(\qq)$, the residue class field of $\qq$. 

Observe that $[F:L]=p$ or $[F:L]=1$, $[L:K]=p-1$, so either $[F:K]=p(p-1)$ or $[F:K]=p-1$. 

\begin{remark}It seems that the case $[F:L]=1$ is rare. Recall that the roots of the modular equation $\Phi_p(X, j(E))=0$ are the $j$-values of all elliptic curves isogeneous to $E$ with cyclic kernel of degree $p$. From this follows that any cyclic isogeny is defined over a subfield of $K(E[p])$ and so
$$L\subseteq L\cdot K(\Phi_p(X, j(E)))\subseteq K(E[p]).$$ Therefore a necessary condition for $F=K(E[p])$ to be equal to $L$ is that the modular polynomial $\Phi_p(X, j(E))$ (of degree $p$) splits completely over $L$, i.e., that $L$ is a splitting field for $\Phi_p(X, j(E))$. Hence it is necessary that $E$ is represented by an $L$-rational point on the modular curve $X_0(p)_{/\Q}$. Further evidence of this scarcity can be found in \cite{Jimenez-Lozano}, where it is proved that if $K=\Q$, the only possible values for $n$ where $\Q(E[n])=\Q(\zeta_n)$ can happen are $2, 3, 4$ and $5$. 
\end{remark}
\begin{remark}Assuming the degree of $F/L$ is bigger than one, it is fairly simple to find a generator $x$ such that $F=L[\sqrt[p]{x}]$. See the (sketch of) the proof of \cite[Proposition 2.5]{Yasuda_Kummer}. 
\end{remark}

I want to point out that many of the statements made below are well-known. However, I made the decision to prove most of them anyway, probably with more difficult proofs than needed. The main reasons are that I had problem finding suitable references and also that I found some joy in coming up with the proofs, in addition to fresh insights.

We need to separate the following into two cases: $\ell \neq p$, and $\ell=p$. 
\subsection{Special fibres of elliptic Jackson spaces; the case $\ell \neq p$}\label{sec:l!=p}
By hypotheses here then, $\qq\nmid p$. Then $\qq_K$ is unramified in $L$ by lemma \ref{lem:Cohen_cyclotomic}. Assume $[F:L]\neq 1$. 
\begin{lem}\label{lem:qunramified}Let $\qq$ be a prime in $L$ over $\qq_K$. Then $\qq$ is unramified in $F/L$ if and only if $v_{\qq}(x)\equiv 0\,\, (\text{mod $p$})$. In particular, if $\qq\nmid x$, $\qq$ is unramified. 
\end{lem} 
\begin{proof}
	See \cite[Exercise 2.12]{Cassels_Frohlich}.
\end{proof}
\begin{remark}\label{rem:unramified}
Notice that $\qq$ can be unramified even if $\qq\mid x$. Namely if 
$$x=\mathfrak{a}\qq^{p^t}, \,\,\, t\geq 1\quad \text{or, equivalently,}\quad v_{\qq}(x)\not\equiv 0\,\, (\text{mod $p$}),$$ for  some ideal $\mathfrak{a}$ in $\oo_L$, relatively prime to $\qq$. This is seen by a change of divisor $x$ to an isomorphic, unramified extension (see \cite[Exercise 2.12]{Cassels_Frohlich}). In other words, we can change $x$ to an element $y\in L$ such that 
$$F= L[\sqrt[p]{x}]\simeq L[\sqrt[p]{y}], \quad \text{with $\qq\nmid y$}.$$
\end{remark}
Now since $\qq_K$ is unramified in $L$, $\qq$ is unramified in $F$ if and only if $v_{\qq}(x)\equiv 0\,\, (\text{mod $p$})$. Also, since $\ell\neq p$, $\zeta_p \not\equiv 1\,\, (\text{mod $\ell$})$. Note that
\begin{align}\label{eq:ram_index}
	e(\qq_F\vert \qq_K)=e(\qq_F\vert \qq)e(\qq\vert \qq_K).
	\end{align}

\begin{thm}\label{prop:semi}Suppose $\ell\neq p$. Then $E$ has semistable reduction at $\qq_K\mid \ell$ if and only if, possibly after a change of generator $x$,  $\eJac_x\otimes_{L} k$ is generic, or, equivalently, that $\Xscr_{\eJac_x}\otimes_L k$ is a generic Jackson space.  
\end{thm}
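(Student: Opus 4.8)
The plan is to connect the three ramification-theoretic conditions ``$E$ has semistable reduction at $\qq_K$'', ``$F/L$ is unramified at $\qq$'', and ``$\eJac_x \otimes_L k$ is generic'' by chaining together the results already available. First I would invoke Theorem~\ref{thm:Kida}(a) (Kida's theorem), which, under the standing assumption $p \geq 3$, says precisely that $F = K(E[p])$ is unramified over $K$ at the place above $\ell$ if and only if $E$ has semistable reduction over $K$ (type $I_0$ or type $I_n$ with $p \mid n$). So the statement to prove reduces to: \emph{$F/K$ is unramified at $\qq_K$ $\iff$ (after a possible change of generator $x$) $\eJac_x \otimes_L k$ is a generic Jackson algebra.}

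Next I would handle the tower $K \subseteq L \subseteq F$ using the multiplicativity of ramification indices recorded in \eqref{eq:ram_index}, namely $e(\qq_F\vert\qq_K) = e(\qq_F\vert\qq)\, e(\qq\vert\qq_K)$. Since we are in the case $\ell \neq p$, i.e.\ $\qq_K \nmid p$, Lemma~\ref{lem:Cohen_cyclotomic} gives that $\qq_K$ is unramified in $L = K(\zeta_p)$, so $e(\qq\vert\qq_K) = 1$ and therefore $F/K$ is unramified at $\qq_K$ if and only if $F/L$ is unramified at $\qq$. By Lemma~\ref{lem:qunramified} this is equivalent to $v_\qq(x) \equiv 0 \pmod p$. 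Invoking Remark~\ref{rem:unramified}, whenever this congruence holds we may replace $x$ by an isomorphic generator $y$ with $\qq \nmid y$; so the condition ``$F/L$ unramified at $\qq$'' is equivalent to ``after a change of generator, $\qq \nmid x$''. Conversely, if $v_\qq(x) \not\equiv 0 \pmod p$, then $\qq \mid x$ for \emph{every} choice of generator, since the valuation class mod $p$ is an invariant of the extension.

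Finally I would translate the divisibility condition on $x$ into the structure of $\eJac_x \otimes_L k$ via Proposition~\ref{prop:reductionKW} / Proposition~\ref{prop:arithmJxram}(i)--(ii): since $\qq_K \nmid p$, all $p$-th roots of unity survive in $k = k(\qq)$, so $\bar\zeta^r$ remains a primitive root and we are never in case (1) of Proposition~\ref{prop:reductionKW}; thus the reduction $\eJac_x \otimes_L k$ is generic (the ``unchanged relations'' case) precisely when $\bar x \neq 0$, i.e.\ when $\qq \nmid x$, and it degenerates to the quantum affine algebra $\eJac_0 \otimes_L k$ exactly when $\qq \mid x$. Combining the three equivalences: $E$ semistable at $\qq_K$ $\iff$ $F/K$ unramified at $\qq_K$ $\iff$ (after change of generator) $\qq\nmid x$ $\iff$ $\eJac_x\otimes_L k$ is generic.

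The main obstacle I anticipate is the bookkeeping around the phrase ``possibly after a change of generator $x$'': one must be careful that the change-of-generator operation in Remark~\ref{rem:unramified} is available exactly in the unramified case and not otherwise, so that the biconditional is not silently broken. A secondary point requiring care is the edge case $[F:L] = 1$, which is excluded by the standing hypothesis of the subsection but should perhaps be acknowledged; in that degenerate situation $F = L$, the Kummer generator $x$ may be taken to be a $p$-th power (so $\qq \nmid x$ after adjustment) and $\eJac_x$ reduces to a quantum affine space or polynomial ring, consistent with $E$ being forced to have semistable (indeed, in many cases good) reduction—but one should state explicitly how one wants to regard genericity there, or simply restrict to $[F:L] = p$ as the hypothesis already does.
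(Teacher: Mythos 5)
Your proposal is correct and follows essentially the same route as the paper's own proof: Kida's theorem, the multiplicativity of ramification indices in \eqref{eq:ram_index}, Lemma \ref{lem:Cohen_cyclotomic} to kill $e(\qq\vert\qq_K)$, Lemma \ref{lem:qunramified} to convert unramifiedness into $v_\qq(x)\equiv 0 \pmod p$, and Remark \ref{rem:unramified} for the change of generator, then reading off genericity of the reduction. Your added remarks (invariance of $v_\qq(x)$ mod $p$ under change of generator, and the excluded case $[F:L]=1$) are sensible bookkeeping but do not change the argument.
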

The claim is independent of $r$ since $\zeta^r$ is primitive for all $1\leq r\leq p-1$. However, the resulting spaces are non-isomorphic in general. 

\begin{proof}Assume first that $E$ is semistable. This is, by Kida's theorem \ref{thm:Kida}, equivalent to $\qq_K$ unramified in $F/K$. Since the left-hand-side of (\ref{eq:ram_index}) is then $1$, we see that $\qq$ is unramified in $F/L$. By lemma \ref{lem:qunramified} this is equivalent to $v_{\qq}(x)\equiv 0\,\,(\text{mod $p$})$. Therefore, either $\qq\nmid x$ and the reduction becomes generic; or $x=\mathfrak{a}\qq^s$, with $p\mid s$, in which case remark \ref{rem:unramified} implies that we can change generator $x\to y$ so that $F=L[\sqrt[p]{y}]$ and $\qq\nmid y$, and therefore the reduction is still generic (after change to $y$). 

Conversely, suppose $\eJac_x\otimes_{L}k$ is generic. Then $x\not\equiv 0\,\,(\text{mod $\qq$})$. Hence $v_{\qq}(x)\equiv 0\,\,(\text{mod $p$})$, implying that $\qq$ is unramified in $F$ by lemma \ref{lem:qunramified}. From (\ref{eq:ram_index}) we thus find that $e(\qq_F\vert \qq_K)=e(\qq\vert\qq_K)$. Since $\qq_K\nmid p$, lemma \ref{lem:Cohen_cyclotomic} implies that $e(\qq\vert \qq_K)=1$ and so $\qq_K$ is unramified in $F$. Kida's theorem then gives that $E$ is semistable. The proof is complete. 
\end{proof}

\begin{thm}\label{prop:nonsemi}
Suppose $\ell\neq p$. If $E$ has non-semistable reduction at $\qq_K\mid\ell$, then $\Xscr_{\eJac_x}\otimes_{L}k$ is a (possible trivial, depending on the roots of unity in $k$) quantum affine space. 

Suppose, conversely, that $\Xscr_{\eJac_x}\otimes_{L}k$ is a quantum affine space, implying that $x=\mathfrak{a}\qq^s$. 
\begin{itemize}
	\item[(i)] If $p\nmid s$, then $E$ has non-semistable reduction;
	\item[(ii)] if $p\mid s$, then $E$ has semistable reduction. 
\end{itemize}
\end{thm}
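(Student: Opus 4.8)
The plan is to mirror the structure of the proof of Theorem~\ref{prop:semi}, running the reduction-type dictionary in both directions via the ramification-index factorisation (\ref{eq:ram_index}) and Kida's theorem~\ref{thm:Kida}. First I would dispose of the forward direction. Suppose $E$ has non-semistable reduction at $\qq_K\mid\ell$. By Kida's theorem (recall $\ell\neq p$; if $p\geq 3$ use (a), if $p=2$ use (b)) this forces $\qq_K$ to be ramified in $F/K$, so the left-hand side of (\ref{eq:ram_index}) is $>1$. Since $\qq_K\nmid p$, Lemma~\ref{lem:Cohen_cyclotomic} gives $e(\qq\mid\qq_K)=1$, hence $e(\qq_F\mid\qq)>1$, i.e.\ $\qq$ ramifies in $F/L$. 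By Lemma~\ref{lem:qunramified} this means $v_\qq(x)\not\equiv 0\pmod p$, and by Remark~\ref{rem:unramified} no change of generator can remove this, so (after normalising $x$ to have support prime to $p$-th powers) $\qq\mid x$ genuinely, i.e.\ $\bar x=0$ in $k$. By Proposition~\ref{prop:reductionKW}(2) — transported to $\eJac_x$ via the isomorphism of Proposition~3.2.? / the relations (\ref{eq:J}) — the reduction $\eJac_x\otimes_L k$ has the quantum-affine-space relations $\eps_i\eps_j=\bar\zeta^{r(j-i)}\eps_j\eps_i$ (possibly the commutative polynomial algebra if moreover $\bar\zeta=1$, which is the "trivial" case flagged in the statement). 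Hence $\Xscr_{\eJac_x}\otimes_L k$ is a (possibly trivial) quantum affine space.

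For the converse, assume $\Xscr_{\eJac_x}\otimes_L k$ is a quantum affine space, which by Proposition~\ref{prop:reductionKW} forces $\bar x=0$, i.e.\ $\qq\mid x$; write $x=\mathfrak a\qq^s$ with $\mathfrak a$ prime to $\qq$ and $s\geq 1$. Case (i): if $p\nmid s$, then $v_\qq(x)=s\not\equiv 0\pmod p$, so by Lemma~\ref{lem:qunramified} $\qq$ is ramified in $F/L$, hence $e(\qq_F\mid\qq)>1$; combined with $e(\qq\mid\qq_K)=1$ (Lemma~\ref{lem:Cohen_cyclotomic}, as $\qq_K\nmid p$) and (\ref{eq:ram_index}) we get $e(\qq_F\mid\qq_K)>1$, so $\qq_K$ is ramified in $F/K$, and Kida's theorem~\ref{thm:Kida} yields that $E$ is \emph{not} semistable. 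Case (ii): if $p\mid s$, then $v_\qq(x)\equiv 0\pmod p$, so by Remark~\ref{rem:unramified} we may change the generator $x\to y$ with $\qq\nmid y$ and $F=L[\sqrt[p]{y}]$; then $\qq$ is unramified in $F/L$ and, again by (\ref{eq:ram_index}) with $e(\qq\mid\qq_K)=1$, $\qq_K$ is unramified in $F/K$, so Kida's theorem gives that $E$ is semistable. This completes both directions.

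The one genuine subtlety — and the step I expect to need the most care — is bookkeeping the normalisation of the generator $x$. The hypothesis "$\Xscr_{\eJac_x}\otimes_L k$ is a quantum affine space" is a statement about a \emph{fixed} presentation $F=L[\sqrt[p]{x}]$, whereas $E$'s reduction type is intrinsic; so I must be careful that "$x=\mathfrak a\qq^s$" in case (ii) is compatible with $\qq$ actually ramifying or not, and that the clause in the statement allowing $\Xscr_{\eJac_x}\otimes_L k$ to come out "trivial" (commutative polynomial algebra) is exactly the sub-case $\bar\zeta=1$, which under $\ell\neq p$ never happens — so in fact over such $\ell$ the reduction is always a \emph{genuine} quantum affine space, not the commutative one; I would add a sentence noting this so the parenthetical in the statement is only really needed for uniformity of phrasing. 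Everything else is a direct chase through (\ref{eq:ram_index}), Lemmas~\ref{lem:Cohen_cyclotomic} and~\ref{lem:qunramified}, Remark~\ref{rem:unramified}, Proposition~\ref{prop:reductionKW}, and Kida's theorem, with no computation beyond tracking valuations modulo $p$.
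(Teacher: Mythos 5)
Your proof is correct and follows essentially the same route as the paper's: the same chase through Kida's theorem~\ref{thm:Kida}, Lemmas~\ref{lem:Cohen_cyclotomic} and~\ref{lem:qunramified}, Remark~\ref{rem:unramified} and the multiplicativity (\ref{eq:ram_index}), exactly as in Theorem~\ref{prop:semi} (of which, as the paper itself notes, this theorem is in any case a consequence). One small caveat: your aside ``if $p=2$ use (b)'' does not give the forward implication, since for $p=2$ Kida's part (b) lists the non-semistable types $\mathrm{I}^\ast_0$ and $\mathrm{I}^\ast_n$ ($n$ odd) among the unramified ones; this is harmless here because the standing assumption of the paper is $p\geq 3$.
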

\begin{proof}
	Since $\qq_K\nmid p$, lemma \ref{lem:Cohen_cyclotomic} implies that $\qq_K$ is unramified in $L$. From (\ref{eq:ram_index})
	 we find that $e(\qq_F\vert \qq_K)=e(\qq_F\vert\qq_K)$ and so if $\qq$ ramifies in $F$, $\qq$ must also ramify in $F$. Therefore, lemma \ref{lem:qunramified} implies that $v_{\qq}(x)\not\equiv 0\,\,(\text{mod $p$})$. Hence $x=0$ in $\eJac_x\otimes_{L}k$.
	
	Conversely, assume $x=\mathfrak{a}\qq^s$. If $p\nmid s$ then lemma \ref{lem:qunramified} gives that $\qq$ is ramified in $F$, implying that $\qq_K$ is also ramified in $F$. Kida's theorem then implies that $E$ has non-semistable reduction. On the other hand, if $p\mid s$, then $\qq$ is unramified in $F$ (after a change of generator). Equation (\ref{eq:ram_index}) gives $e(\qq_F\vert \qq_K)=e(\qq\vert\qq_K)$, implying that $\qq$ is unramified in $F$, since $e(\qq\vert \qq_K)=1$ when $\qq_K\nmid p$ by lemma \ref{lem:Cohen_cyclotomic}. Therefore, Kida's theorem implies that $E$ has semistable reduction. 
\end{proof}
Clearly, we didn't need to prove theorem \ref{prop:nonsemi} since it is a consequence of theorem \ref{prop:semi}. However, I feel that the above proof gives me some interesting insights.
\begin{remark}It is interesting to note that, by \cite[Theorem IV.10.3(b)]{SilvermanAdvanced}, $E$ has good reduction over $K(E[\ell])$. 
\end{remark}
\subsection{Special fibres of elliptic Jackson spaces; the case $\ell = p$}\label{sec:l=p}

Since $\qq\mid p$, we have that $\zeta_p \equiv 1 \,\,(\text{mod $p$})$ unless $(p-1)\mid e(\qq)$. We assume from now on that $e(\qq)=1$, and so we can observe that $\bar\zeta_p =1\in k$. The case $e(\qq)\geq 2$ is probably considerably harder. As always, the case $p=2$, presents additional difficulties so we assume for simplicity that $p>2$. 


\begin{thm}\label{thm:reduction}
Let $E_{/K}$ be an elliptic curve with a $K$-rational $p$-torsion point. Assume that $[F:L]\geq 2$ (i.e., that not all $p$-torsion points are $K$-rational) and that $e(\qq)=1$. 
\begin{itemize}
\item[(i)] Let $E_{/K}$ be an elliptic curve with a $K$-rational $p$-torsion point and let $\qq_K$ be a prime of $K$ over $p$. Then $E$ cannot have good supersingular reduction at $\qq_K$; in addition,
\item[(ii)] the case of non-split multiplicative reduction cannot happen. 
\item[(iii)] If $E_{/K}$ has good ordinary, split multiplicative, or additive, reduction at $\qq_K$, then
$$\Xscr_{\eJac_x}\otimes_L k=\begin{cases}
\Xscr_{\mathbf{A}^1_{k}}, &\text{if $\qq\nmid x$}\\
\mathbb{A}^3_k, &\text{if $\qq\mid x$.}
\end{cases}$$
\end{itemize}
Recall that $\mathbf{A}^1_{k}$ denotes the first Weyl algebra over the affine $k$-line.
\end{thm}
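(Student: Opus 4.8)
The plan is to reduce everything to the arithmetic ramification dictionary already established in Proposition~\ref{prop:arithmJxram}, which tells us that the structure of $\Xscr_{\eJac_x}\otimes_L k$ is governed by exactly two pieces of data: whether $\qq\mid p$ and whether $\qq\mid x$. Since we are in the case $\ell=p$ and $e(\qq)=1$, we always have $\qq\mid p$, so case (iii) of that proposition applies and gives $\Xscr_{\eJac_x}\otimes_L k=\Xscr_{\mathbf{A}^1_k}$ when $\qq\nmid x$ and $=\mathbb{A}^3_k$ when $\qq\mid x$. Thus statement (iii) of the present theorem is \emph{purely formal} once we know we are in the arithmetic $p\mid n$ regime, and the only real content to prove is the \emph{exclusion} statements (i) and (ii), i.e.\ that good supersingular and non-split multiplicative reduction are incompatible with the existence of a $K$-rational $p$-torsion point (together with $[F:L]\geq2$ and $e(\qq)=1$).

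For (i), the strategy is to use the Galois action on $E[p]$ together with the fact that $E$ has a $K$-rational $p$-torsion point, so the mod-$p$ representation $\bar\rho\colon G_K\to\mathrm{GL}_2(\F_p)$ is (conjugate to) upper-triangular, with one character on the diagonal being trivial. If $E$ had good supersingular reduction at $\qq_K\mid p$, then by Serre's description of the image of inertia on the $p$-torsion in the supersingular case, the restriction of $\bar\rho$ to the inertia group $I_{\qq_K}$ would be given by the two conjugate fundamental characters of level $2$; these are non-trivial and have order $p+1$ (more precisely the image is non-abelian / irreducible over $I_{\qq_K}$), which contradicts $\bar\rho|_{I_{\qq_K}}$ being reducible with a trivial sub. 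I would cite the standard references (e.g.\ Serre's \emph{Propriétés galoisiennes}, or Silverman's treatment) for the inertial description and then simply note the incompatibility with the reducible shape forced by the rational point. One must also use $e(\qq)=1$ and $p>2$ to be sure that no extra ramification in $L/K$ or small-prime pathology rescues the supersingular case; with $e(\qq)=1$, $\qq_K$ is totally ramified of degree $p-1$ in $L$ by Lemma~\ref{lem:Cohen_cyclotomic}, and the fundamental characters of level $2$ still survive.

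For (ii), in the (potentially) multiplicative case the Tate curve gives $\bar\rho|_{G_{K_{\qq_K}}}\cong\begin{pmatrix}\chi\varepsilon & * \\ 0 & \varepsilon\end{pmatrix}$ (up to semisimplification / ordering), where $\chi$ is the mod-$p$ cyclotomic character and $\varepsilon$ is the unramified quadratic character that is trivial exactly in the split case; the existence of a $K$-rational $p$-torsion point forces one of the two diagonal characters to be trivial globally, hence trivial on $G_{K_{\qq_K}}$, and a short case analysis on which character vanishes — using that $\chi$ restricted to the decomposition group at $\qq_K\mid p$ is ramified (again via $e(\qq)=1$, $p>2$, Lemma~\ref{lem:Cohen_cyclotomic}) — shows that $\varepsilon$ must itself be trivial, i.e.\ the reduction is split. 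The hardest part of the write-up will be pinning down the exact normalisation of the Tate-curve representation and making the two-character bookkeeping in (i) and (ii) watertight (which diagonal entry is the trivial one, and exactly why non-split and supersingular are then impossible), since everything else is a direct appeal to Proposition~\ref{prop:arithmJxram}; I would therefore spend most of the proof on that representation-theoretic bookkeeping and dispatch (iii) in a single sentence at the end.
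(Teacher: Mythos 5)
Your proposal is correct and follows the same overall skeleton as the paper: part (iii) is dispatched exactly as in the paper by appealing to Proposition \ref{prop:arithmJxram} (its cases (iii) and (iv), since $\qq\mid p=[F:L]$), and the real content lies in the exclusions (i) and (ii), for which both you and the paper invoke the same two inputs, namely Serre's description of inertia in the supersingular case and Tate-curve theory in the multiplicative case. The deductions are phrased differently, though. For (i) the paper stays at the level of fields: it notes $[F:K]=p(p-1)$ and quotes Serre (Proposition 12, 1.11) to say the inertia subgroup of $\Gal(F/K)$ would have order $p^2-1>p(p-1)$, an immediate contradiction; you instead argue on $\bar\rho$, pitting the trivial subcharacter forced by the $K$-rational point against the irreducibility of $\bar\rho\vert_{I_{\qq_K}}$ in the supersingular case. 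That is a valid equivalent route, but tighten the parenthetical claims: the level-$2$ fundamental characters have order $p^2-1$ (not $p+1$), and the inertia image is a cyclic, hence abelian, non-split Cartan subgroup --- what you actually need, and what is true, is irreducibility over $\F_p$, so drop ``non-abelian''. For (ii) your character bookkeeping ($\bar\rho\sim\begin{pmatrix}\chi\varepsilon&\ast\\0&\varepsilon\end{pmatrix}$, a fixed vector forcing a trivial diagonal character, and $\chi$ ramified since $e(\qq)=1$ and $p\geq 3$ ruling out $\chi\varepsilon=1$) is in fact more detailed than the paper's argument, which only remarks that the isomorphism $E\simeq E_q$ in the non-split case requires an unramified quadratic base change; your version makes the exclusion watertight, at the cost of the normalisation care you yourself flag. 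Both approaches buy essentially the same theorem; yours is more self-contained on (ii), the paper's is shorter on (i).
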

Parts of the above theorem are almost certainly known but I haven't found precise references so I supply complete proofs of all statements. There are almost certainly easier and shorter arguments than presented.  
\begin{proof}
Assume that $E_{/K}$ has good supersingular reduction at $\qq_K\mid p$. Since $E$ has a $K$-rational $p$-torsion point, we know that $F/L$ is either trivial or cyclic of degree $p$. In the first case there is nothing to prove, so assume that the degree of the extension is $p$. This means that $[F:K]=p(p-1)$. By the assumption that $E$ has good supersingular reduction, \cite[Proposition 12, 1.11]{Serre_Galois_Elliptic} implies that the inertia group of $\mathrm{Gal}(F/K)$ has degree $p^2-1$, which is then clearly not possible for an extension of degree $p(p-1)$. 


Suppose that $E_{/K}$ has multiplicative reduction over $p$. Let $E_q$ be the associated Tate curve, with $v_\qq(q)<0$ (see \cite[Theorem V.5.3]{SilvermanAdvanced} for details). Then $E$ has a $K$-rational $p$-torsion point if and only if $E$ has split multiplicative reduction and the Tate period $q$ is a $p$-th power. Now, from \cite[Theorem V.5.3(b)]{SilvermanAdvanced}) we have that $E$ is split multiplicative if and only if $E\simeq E_q$ over $K$. Hence, in particular $E[p]\simeq E_q[p]$ (over $K$). On the other hand, if $E$ has non-split multiplicative reduction there are no $K$-rational $p$-torsion points. The reason for this is that we need to base change $E$ to an unramified quadratic extension of $K$ to get an isomorphism between $E$ and $E_q$. The details can be found in \cite[Section V.5]{SilvermanAdvanced}. 

Since $\qq\mid [F:L]$, proposition \ref{prop:arithmJxram} (iii) shows that, $F/L$ is ramified and $\Xscr_{\eJac_x}\otimes_L k$ is either $\Xscr_{\mathbf{A}^1_{k(\pp)}}$ or $\mathbb{A}^3_k$, depending on whether $\qq\mid x$ or not. 
\end{proof}

The above theorem suggests that $\eJac_x$ might not be the best algebra to use to study the case $\ell=p$. Therefore we will turn to the infinitesimal algebras from section \ref{sec:inf_envel}.  We remark that the infinitesimal algebras $\mathbf{I}_q$ enjoy the same ring-theoretic and homological properties as $\eJac_x$ as given in theorem \ref{prop:centre_J} and corollary \ref{cor:Jacks_hom}.

Assume that $K$ is a local field, unramified over $\Q_p$ (so $e=1$). We can (and will) choose $p$ as the uniformiser of $\oo_K$. Put $k:=\oo_K/p$. Because $K$ is unramified over $\Q_p$ we can write  
$$K=\Q_p(\zeta_m),\quad m=p^n-1, \quad n=[K:\Q_p]$$where $\zeta_m$ is a primitive $m$-th root of unity. Hence $k=\F_{p^n}$ and $\bar\zeta_m\in k$. 

Let $E_{/K}$ be an elliptic curve over $K$ and $\Ecal:=\Ecal_{/\oo_K}$ its minimal regular model. The extension $F/L=K(E[p])/K(\zeta_p)$ is a separable extension with ring of integers $\oo_L[\sqrt[p]{x}]=\oo_K[\zeta_p][\sqrt[p]{x}]$. Therefore, we can find a $\beta\in F$ such that $F/L=K(\zeta_p)(\beta)/K(\zeta_p)$ and $\oo_F:=\oo_L[\beta]$. 

We can choose $x$ as
$x=\sum_{i=0}^{p-1}\zeta^i_p \tau^i(\beta),$ where we have chosen a generator $\tau$ for $\mathrm{Gal}(F/L)$. The extension $K(E[p])/K(\zeta_p)=L(\beta)/L$ is totally (and wildly) ramified since reducing modulo the maximal ideal $\mm=(p)\subset\oo_K$ we have
$$\oo_L[\sqrt[p]{x}]=\oo_K[\zeta_p][\sqrt[p]{x}]\twoheadrightarrow k[T]/(T-\bar x)^p.$$In addition, $\bar x$ is identically zero since 
$$\bar x=\sum_{i=0}^{p-1}\bar\zeta^i_p \bar\tau^i(\bar\beta)=p\bar\beta=0.$$ Recall that, since the extension is ramified, $\bar\tau=\id$. Since $K(E[p])/K(\zeta_p)$ is a totally ramified Kummer extension it is, a priori, clear that $x$ must be a unit times a uniformiser (which in this case is $p$). 


The choice of $\mathtt{P}\in E[p](K)$ allows us to choose a $\mathtt{Q}\in E[p]$ such that $\{\mathtt{P}, \mathtt{Q}\}$ is a $\Z$-basis for $E[p]$. The lifting $\{\hat{\mathtt{P}}, \hat{\mathtt{Q}}\}$ becomes a $\Z$-basis on $\Ecal$, but its specialisation to the closed fibre need not be a basis. Since $\mathtt{P}\in E[p](K)$ the specialisation $\bar{\mathtt{P}}$ to the closed fibre can be viewed as an element in the special fibre $\bar\Ecal(k)$. 

Recall that $\oo_L^!=\oo_L[T]/(T^p)$ and consider the polynomial ring $\oo_L^![y]=\oo_L[T, y]/(T^p)$. There are two natural morphisms
$\oo_L^![y]\to L^![y]$ and $\oo_L^![y]\to k^![y]$. On $\oo_L^![y]$ we introduce the $\oo_L[y]$-linear automorphism $\sigma(T):=y T$. This naturally extends to the generic fibre $L^![y]$ and the special fibre $k^![y]$.

Evaluation $y\to\lambda\in K^\mathrm{al}$ defines an $\oo^!_L[\lambda]$-rational point on $\Spec(\oo_L^![y])$. We now construct the algebra
$$\mathbf{I}_\lambda:=\mathbf{E}\Big(\oo_L[\lambda]^!\cdot(\id-\sigma)\Big)=\frac{\oo_L[\lambda]\langle\eps_0,\eps_1,\dots,\eps_{p-1}\rangle}{\Big(\eps_i\eps_j-\lambda^{j-i}\eps_j\eps_i-(1-\lambda^{j-i})\eps_{i+j}\Big)},$$over $\oo_L[\lambda]$. Remember that we take $i+j$ modulo $p$.

There is a natural subalgebra of $\mathbf{I}_\lambda$, namely, 
$$\Inf_{\!\lambda}:=\oo_L[\lambda]\langle \eps_0, \eps_1, \eps_{p-1}\rangle\Bigg/
\begin{pmatrix}\eps_0\eps_1-\lambda\eps_1\eps_0-(1-\lambda)\eps_{1}\\
\eps_0\eps_{p-1} - \lambda^{p-1}\eps_{p-1}\eps_0-(1-\lambda^{p-1})\eps_{p-1}\\
\eps_1\eps_{p-1} - \lambda^{p-2}\eps_{p-1}\eps_1
\end{pmatrix}.$$ For simplicity, we put $\eps_2:=\eps_{p-1}$. If we want to make the prime explicit in the notation we write $\Inf_{\!\lambda}^{(p)}$. 

Making the substitution $\eps_0\mapsto \eps_0 +1$ and some re-arranging yields the isomorphic algebra (which we also denote $\Inf_{\!\lambda}$):
$$\Inf_{\!\lambda}=\oo_L[\lambda]\langle \eps_0, \eps_1, \eps_2\rangle\Bigg/
\begin{pmatrix}\eps_0\eps_1-\lambda\eps_1\eps_0\\
\eps_2\eps_{0} - \lambda^{1-p}\eps_{0}\eps_2\\
\eps_2\eps_{1} -\lambda^{2-p}\eps_{1}\eps_2
\end{pmatrix}.$$ 

\begin{remark}\label{rem:q-plane}
The algebra $\Inf_{\!\lambda}$ can be viewed as the glueing of three (in general) distinct quantum planes. 
\end{remark}

\subsubsection{The special fibre of $\Inf$}

We will now reduce modulo the maximal ideal $(p)$ and the generic point. The following diagram summarises the situation:
$$\xymatrix{\bar\Ecal\ar@{..}[d]&\Ecal\ar[l]_<<<<<<{\mathrm{red}}\ar[r]\ar@{..}[d]&E_{/K}\ar@{..}[d]\\
\Inf_{\!\lambda}\otimes_{\oo_K} k&\Inf_{\!\lambda}\ar[l]_<<<<{\mathrm{red}}\ar[r]&\Inf_{\!\lambda}\otimes_{\oo_K}K.}$$Put $\overline{\Inf}_{\!\lambda}=\Inf_{\!\lambda}\otimes_{\oo_K} k$, the \emph{special fibre} of $\Inf_{\!\lambda}$. We call $\Inf_{\!\lambda}\otimes_{\oo_K}K$, the \emph{generic fibre} (of course).

We will continue working with the minimal regular model $\Ecal$ of $E_{/K}$. Recall that $E(K)=\Ecal(\oo_K)$, and thus $E(K)[p]=\Ecal(\oo_K)[p]$. The set of non-singular $k$-points on $\bar E$ is isomorphic to the identity component of the special fibre $\bar\Ecal^0(k)$ (see \cite[Corollary IV.9.2(c)]{SilvermanAdvanced}). 
%


Fix $a, b\in\Z$ and let $$\boldsymbol{\theta}_{(a,b);S}:=S\big([a]\mathtt{P}+[b]\mathtt{Q}\big)\in F=K(E[p]),$$ where $S$ is a function in the coordinates of $[a]\mathtt{P}+[b]\mathtt{Q}$. Normally we will consider $a, b$ and $S$ as fixed and so we simply write $\boldsymbol{\theta}$ for $\boldsymbol{\theta}_{(a,b);S}$ unless confusion can arise. We write $\boldsymbol{\theta}=\sum_{i=0}^{p-1}\theta_i\beta^i$, with $\theta_i\in L$.

From now on we let $d$ be the number of irreducible components of $\bar\Ecal(k)$, counted without multiplicities, and $f$ the conductor of $E$. Put $w:=f+d$ and evaluate $\lambda$ at $\zeta_w$, where $\zeta_{w}\in\mu_{w}$ is primitive. We choose $\zeta=\zeta_w$ such that $\bar\zeta$ is also primitive. In addition, we set $\Inf_{\!w}:=\Inf_{\!\zeta}$. Ogg's formula (see \cite[IV.11.1]{SilvermanAdvanced}) shows that  $w=v_p(\Delta^\mathrm{min}_E)+1$, where $\Delta^\mathrm{min}_E$ is the minimal discriminant of $E_{/K}$. 

\begin{remark}Note that we make three successive evaluations: $y\to \lambda\to \zeta_w$. We could certainly do the evaluation $y\to\zeta_w$ directly, but I feel that this misses the point in the sense that the value $\zeta_w$ for $\lambda$ is a specific \emph{choice}.
\end{remark}

When $w=f+d$ is irrelevant we sometimes write $\zeta$ instead of $\zeta_w$. Put $k_w:=k(\zeta_w)$ and let $\mathbf{z}:=(z_0, z_1, \dots, z_{p-1})^T$ be a vector in $\oo_L[\zeta]^p$. Recall that $\boldsymbol{\theta}:=\boldsymbol{\theta}_{(a,b);S}$ is fixed. Define the \'etale $L$-rational point $\mathfrak{N}_{\boldsymbol{\theta},\mathbf{z}}$ to be the family of one-dimensional $\Inf_{\!w}$-modules $\mathfrak{N}_{j}:=\oo_L[\zeta]\cdot \mathbf{y}$ defined by
$$\eps_0\cdot\mathbf{y}:=\Delta_E \mathbf{y}, \quad \eps_1\cdot\mathbf{y}:=z	_j\mathbf{y}, \quad \eps_2\cdot \mathbf{y}:=\theta_j\mathbf{y},$$ where $\Delta_{E}$ is  the discriminant of $E$ and $\theta_j$ is the $j$-th coefficient of $\boldsymbol{\theta}$. To be explicit we will sometimes write $\mathfrak{N}_{j}$ as a triplet $(\Delta_{ E}, z_j, \theta_j)$ and the \'etale point $\mathfrak{N}_{\boldsymbol{\theta},\mathbf{z}}=(\Delta_E, \mathbf{z}, \boldsymbol{\theta})$.

 In order for $\mathfrak{N}_j$ to be an $\Inf_{\!w}$-module it is necessary and sufficient that 
$$(1-\zeta)\Delta_E z_j=0, \qquad (1-\zeta^{1-p})\Delta_E\theta_j=0, \quad \text{and}\quad (1-\zeta^{2-p})z_j\theta_j=0.$$ Clearly, the special fibre, which we denote $\overline{\Inf}_{\!w}$, is 
$$(1-\bar\zeta)\bar\Delta_E\bar z_j=0, \qquad (1-\bar\zeta^{1-p})\bar\Delta_E\bar\theta_j=0, \quad \text{and}\quad (1-\bar\zeta^{2-p})\bar z_j\bar\theta_j=0.$$


If $z_j\neq 0$ for all $j$ the \'etale rational point $\mathfrak{N}_{\boldsymbol{\theta},\mathbf{z}}$ is clearly empty since the element $\boldsymbol{\theta}$ is fixed. 
 
A computation shows the following theorem.
\begin{thm}\label{thm:ext_inf}
Let $\bar{\mathfrak{N}}_1$ and $\bar{\mathfrak{N}}_2$ be $(\bar\Delta_E, \bar z_1, \bar\theta_1)$ and $(\bar\Delta_E,\bar z_2, \bar\theta_2)$, respectively. Then 
$$T_\fam{N}=\Ext_{\overline{\Inf}_{\zeta}}^1(\bar{\mathfrak{N}}_1, \bar{\mathfrak{N}}_2)=\begin{cases}
k_w, & \text{if $\bar \Delta_E=0$ and $\bar z_1 = \bar z_2=0$ and $\bar\theta_2=\bar\zeta^{2-p}\bar\theta_1$}\\
k_w, & \text{if $\bar\Delta_E = 0$ and $\bar\theta_1=\bar\theta_2=0$ and $\bar z_1 = \bar\zeta^{2-p}\bar z_2$}\\
0, &\text{otherwise.}
\end{cases} $$From this follows that 
$$\hat\OO_{\fam{N}}=\begin{pmatrix}
	\End_{k_w}(\bar{\mathfrak{N}}_1)\otimes {k_w}[[t_{11}]] & \Hom_{k_w}(\bar{\mathfrak{N}}_1,\bar{\mathfrak{N}}_2)\otimes\langle t_{12}\rangle\\
	\Hom_{k_w}(\bar{\mathfrak{N}}_2,\bar{\mathfrak{N}}_1)\otimes\langle t_{21}\rangle &\End_{k_w}(\bar{\mathfrak{N}}_2)\otimes {k_w}[[t_{22}]]
\end{pmatrix}$$where $\fam{N}=\{\bar{\mathfrak{N}}_1, \bar{\mathfrak{N}}_2\}$.
\end{thm}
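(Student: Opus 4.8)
The plan is to reduce the statement to an explicit linear-algebra computation of $\Ext^1$ for one-dimensional modules over the quantum-plane-like algebra $\overline{\Inf}_{\!\zeta}$, following the same recipe already used for $\Xscr_{\eJac_x}$ in Section \ref{sec:point_locus_tangents}. Recall that $\overline{\Inf}_{\!w}$ is generated over $k_w$ by $\eps_0,\eps_1,\eps_2$ with the three relations coming from glueing quantum planes (Remark \ref{rem:q-plane}); a one-dimensional module $\bar{\mathfrak N}_i$ is the datum of scalars $(\bar\Delta_E,\bar z_i,\bar\theta_i)$ by which $\eps_0,\eps_1,\eps_2$ act, and these must satisfy the three vanishing conditions $(1-\bar\zeta)\bar\Delta_E\bar z_i=0$, $(1-\bar\zeta^{1-p})\bar\Delta_E\bar\theta_i=0$, $(1-\bar\zeta^{2-p})\bar z_i\bar\theta_i=0$ recorded just above the statement. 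First I would set up the standard presentation of an extension $0\to\bar{\mathfrak N}_2\to M\to\bar{\mathfrak N}_1\to 0$ as a $2$-dimensional $k_w$-space on which $\eps_\nu$ acts by an upper-triangular $2\times2$ matrix $\begin{pmatrix}\alpha_\nu^{(1)}&c_\nu\\0&\alpha_\nu^{(2)}\end{pmatrix}$, where $\alpha_0^{(i)}=\bar\Delta_E$, $\alpha_1^{(i)}=\bar z_i$, $\alpha_2^{(i)}=\bar\theta_i$, and the off-diagonal entries $(c_0,c_1,c_2)\in k_w^3$ are the cocycle.

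Next I would impose the three defining relations of $\overline{\Inf}_{\!\zeta}$ on these matrices. Each relation of the shape $\eps_\mu\eps_\nu-q_{\mu\nu}\eps_\nu\eps_\mu=0$ contributes, in the upper-right corner, one linear equation in $(c_0,c_1,c_2)$ of the form $(\alpha_\mu^{(1)}-q_{\mu\nu}\alpha_\mu^{(2)})\,c_\nu-(q_{\mu\nu}\alpha_\nu^{(1)}-\alpha_\nu^{(2)})\,c_\mu=0$ (the diagonal entries automatically reproduce the scalar relations, which hold by hypothesis). This gives a $3\times3$ homogeneous linear system for $(c_0,c_1,c_2)$; its solution space is the space of cocycles $Z^1$. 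Coboundaries $B^1$ come from conjugating $M$ by a unipotent $\begin{pmatrix}1&s\\0&1\end{pmatrix}$, which changes $c_\nu$ by $s(\alpha_\nu^{(1)}-\alpha_\nu^{(2)})$; so $B^1$ is the image of the single vector $(\bar\Delta_E-\bar\Delta_E,\ \bar z_1-\bar z_2,\ \bar\theta_1-\bar\theta_2)=(0,\ \bar z_1-\bar z_2,\ \bar\theta_1-\bar\theta_2)$. Then $\Ext^1=Z^1/B^1$, and I would simply run through the cases dictated by which of $\bar\Delta_E,\bar z_i,\bar\theta_i$ vanish and which of the quantum parameters $1-\bar\zeta$, $1-\bar\zeta^{1-p}$, $1-\bar\zeta^{2-p}$ are zero — bearing in mind $\bar\zeta$ is a primitive $w$-th root of unity, so $1-\bar\zeta\neq0$ whenever $w>1$. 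A short case analysis should collapse everything to the two surviving cases listed: both force $\bar\Delta_E=0$, and then either $\bar z_1=\bar z_2=0$ with $\bar\theta_2=\bar\zeta^{2-p}\bar\theta_1$, or $\bar\theta_1=\bar\theta_2=0$ with $\bar z_1=\bar\zeta^{2-p}\bar z_2$, each giving a one-dimensional $\Ext^1$; in all other configurations the cocycle space is already exhausted by coboundaries, so $\Ext^1=0$.

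Finally, for the description of $\hat\OO_{\fam N}$, I would invoke the construction of the pro-representing hull recalled in Section \ref{sec:nc_space}: the matric hull $(\hat H_{ij})$ has diagonal entries completed polynomial (power series) rings on the duals of $\Ext^1(\bar{\mathfrak N}_i,\bar{\mathfrak N}_i)$ and off-diagonal bimodules built from $\Ext^1(\bar{\mathfrak N}_i,\bar{\mathfrak N}_j)$ with obstructions governed by the relevant $\Ext^2$. Since each $\bar{\mathfrak N}_i$ is one-dimensional, $\End_{k_w}(\bar{\mathfrak N}_i)=k_w$, and from part one each relevant $\Ext^1$ is either $0$ or a single copy of $k_w$; I would argue (as in the analogous Propositions for $\eJac_x$) that the obstruction maps vanish here because the ambient algebra is nicely behaved (Auslander-regular, and the relevant $\Ext^2$ spaces between these simple modules either vanish or the cup-products are zero for degree reasons), so the hull is formally smooth, i.e. a free power-series ring in the stated generators $t_{11},t_{12},t_{21},t_{22}$, and $\hat\OO_{\fam N}=(\Hom_{k_w}(\bar{\mathfrak N}_i,\bar{\mathfrak N}_j)\otimes\hat H_{ij})$ takes exactly the displayed matrix form. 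The main obstacle I anticipate is not the cocycle bookkeeping — that is mechanical — but justifying the unobstructedness cleanly: one must either compute the relevant $\Ext^2$ groups for these one-dimensional modules over $\overline{\Inf}_{\!\zeta}$ and check the Massey/cup products vanish, or cite the general regularity results (Theorem \ref{prop:centre_J}(iii), transported to $\mathbf I_q$ and its subalgebra as remarked before the theorem) together with the smoothness of the one-dimensional point locus, to conclude that the versal base is a (completed) free algebra on $T_{\fam N}$ with no relations imposed.
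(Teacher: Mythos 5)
Your computational setup is exactly what the paper's ``a computation shows'' must mean: realise an extension as an upper-triangular action, get one linear condition on the cocycle $(c_0,c_1,c_2)$ from each defining relation of $\overline{\Inf}_{\!\zeta}$, quotient by the coboundary line spanned by $(0,\bar z_1-\bar z_2,\bar\theta_1-\bar\theta_2)$, and then feed the resulting tangent spaces into the matric hull to get $\hat\OO_{\fam{N}}$. So the route is the intended one (and your attention to obstructions is, if anything, more careful than the paper, which elsewhere hedges with ``modulo possible obstructions''). The gap is in your claim that the case analysis ``collapses'' to the two listed cases, and it is not a harmless one because your part two depends on it. Carry out your own linear algebra on the diagonal: for $\bar{\mathfrak{N}}_1=\bar{\mathfrak{N}}_2=(0,0,\bar\theta)$ with $\bar\theta\neq0$, the relations force $c_0=c_1=0$ but leave $c_2$ free, and the coboundary vector is zero, so the self-extension group is $k_w$, not $0$; similarly $(0,\bar z,0)$ and $(\bar\Delta_E,0,0)$ with $\bar\Delta_E\neq0$ have one-dimensional self-$\Ext^1$ (the $c_1$-, resp.\ $c_0$-, direction), and the origin has $(k_w)^3$. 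These do not fall under the two cases you say survive, yet they are precisely what produces the power-series variables $t_{11},t_{22}$ on the diagonal of the displayed $\hat\OO_{\fam{N}}$: if self-$\Ext^1$ were $0$, as your part one would conclude, the diagonal would be just $\End_{k_w}(\bar{\mathfrak{N}}_i)=k_w$ and the ``from this follows'' step fails. You must either treat the displayed $\Ext^1$ formula as the off-diagonal ($\bar{\mathfrak{N}}_1\neq\bar{\mathfrak{N}}_2$) computation and compute the diagonal separately, or (as the surrounding text suggests) restrict to deformations in which the $\eps_0$-action by $\bar\Delta_E$ is held fixed, which is what suppresses the $c_0$-direction; note that without such a restriction the same $c_0$-direction also survives off-diagonally when $\bar\theta_2=\bar\zeta^{1-p}\bar\theta_1$ or $\bar z_1=\bar\zeta\,\bar z_2$, so a literal run of your system gives more nonzero cases than the statement lists.

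Two smaller points. First, fix your sub/quotient convention: as written your matrix puts the character of the quotient $\bar{\mathfrak{N}}_1$ in the invariant (upper-left) slot, which computes $\Ext^1(\bar{\mathfrak{N}}_2,\bar{\mathfrak{N}}_1)$; since the conditions in the statement are direction-sensitive ($\bar\theta_2=\bar\zeta^{2-p}\bar\theta_1$ versus $\bar\theta_1=\bar\zeta^{2-p}\bar\theta_2$), the wrong convention produces the transposed conditions. Second, when $\bar\theta_2=\bar\zeta^{2-p}\bar\theta_1$ the reverse group $\Ext^1(\bar{\mathfrak{N}}_2,\bar{\mathfrak{N}}_1)$ vanishes unless $\bar\zeta^{2(2-p)}=1$, so the simultaneous appearance of both $t_{12}$ and $t_{21}$ in $\hat\OO_{\fam{N}}$ does not come for free from your computation; it needs the ``mirror'' identification the paper introduces right after the theorem, and your write-up should say so explicitly.
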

\begin{remark}The theorem applies also to the case $1-\bar\zeta^{2-p}=0$. Unless $z_1=z_2=0$ or $\theta_1=\theta_2=0$, we always end up in the third case.
\end{remark}
 
Assume $\bar z_j=0$ and let $\bar{\mathfrak{N}}_j$ be $(0,0, \bar\theta_j)$. Then the \emph{mirror} of $\bar{\mathfrak{N}}_j$ is the module $\bar{\mathfrak{N}}_j^\perp$ such that $\bar{\mathfrak{N}}_j^\perp=(0, 0, \bar\zeta^{2-p}\bar\theta_j)$.  Theorem \ref{thm:ext_inf} the shows that there is a tangent between $\bar{\mathfrak{N}}_j$ and $\bar{\mathfrak{N}}_j^\perp$, visually $\bar{\mathfrak{N}}_j\to\bar{\mathfrak{N}}_j^\perp$. Note that $(\bar{\mathfrak{N}}_j^\perp)^\perp$ is not necessarily $\bar{\mathfrak{N}}_j$. However, identifying the modules $\bar{\mathfrak{N}}_j=(0,0, \bar\zeta^{2-p}\bar\theta_j)$ and $\bar{\mathfrak{N}}_j'=(0,0, \bar\zeta^{p-2}\bar\theta_j)$, sets up a symmetry and so with this we can claim the implication $\bar{\mathfrak{N}}_j\to\bar{\mathfrak{N}}_j^\perp\,\,\Longrightarrow\,\, \bar{\mathfrak{N}}_j^\perp\to\bar{\mathfrak{N}}_j$. 

The \emph{mirror} of 
the \'etale $k_w$-rational point $\bar{\mathfrak{N}}_{\boldsymbol{\theta},\mathbf{z}}$ is naturally the family of modules $$\bar{\mathfrak{N}}_{\boldsymbol{\theta},\mathbf{z}}^\perp:=\bar\zeta^{2-p}\bar{\mathfrak{N}}_{\boldsymbol{\theta},\mathbf{z}}.$$ We say that $\mathfrak{N}_{\boldsymbol{\theta},\mathbf{z}}^\perp$ is \emph{a} mirror of $\mathfrak{N}_{\boldsymbol{\theta},\mathbf{z}}$ if $\bar{\mathfrak{N}}_{\boldsymbol{\theta},\mathbf{z}}^\perp$ is the mirror of $\bar{\mathfrak{N}}_{\boldsymbol{\theta},\mathbf{z}}$. Any mirror of $\mathfrak{N}_{\boldsymbol{\theta},\mathbf{z}}$ defines an element in $F$, namely $\boldsymbol{\theta}^\perp:=\zeta^{2-p}\boldsymbol{\theta}$, hence also in $\mathbf{I}_\zeta$, and by restriction, in $\Inf_{\!w}$. 

\subsubsection{$p\neq 3$}For $p\geq 5$ we have
$$f=\begin{cases}
0, & \text{if $E$ has good reduction,}\\
1, & \text{if $E$ has multiplicative reduction, and }\\
2, & \text{if $E$ has additive reduction.}\end{cases}$$We can directly note that, in the case of good reduction, $\overline{\Inf}_{\!0}^{(p)}=k_0[\eps_0, \eps_1, \eps_2]$. 

In the case of multiplicative reduction, i.e., $f=1$, the relations are
$$\overline{\Inf}_{\!d+1}^{(p)}:\quad \eps_0\eps_1-\bar\zeta_{d+1}\eps_1\eps_0, \quad \eps_2\eps_0 - \bar\zeta_{d+1}^{1-p}\eps_0\eps_2, \quad\eps_2\eps_1 - \bar\zeta_{d+1}^{2-p}\eps_1\eps_2
,$$where $d=-v_p(j_E)$.  Hence the reduction is a quantum affine space. 

When the reduction is additive we find the relations
$$\overline{\Inf}_{\!d+2}^{(p)}:\quad \eps_0\eps_1-\bar\zeta_{d+2}\eps_1\eps_0, \quad \eps_2\eps_0 - \bar\zeta_{d+2}^{1-p}\eps_0\eps_2, \quad\eps_2\eps_1 - \bar\zeta_{d+2}^{2-p}\eps_1\eps_2.$$The possible values for $d$ in the additive case are $$d=1, 2, 3, 5, 5-v_p(j_E), 7, 8, 9.$$However, it will turn out that not all these are possible when the curve has a $K$-rational $p$-torsion point.  

%
%
\subsubsection{$p=3$}
Due to possible wild ramification when $p=3$, the results are more subtle. Theorem IV.10.4 in \cite{SilvermanAdvanced} gives the bound $2\leq f\leq 5$, with each value possible. When $f>2$, \cite[Table IV.4.1]{SilvermanAdvanced}, shows that, when $p=3$, $d$ has the possibilities $d=1, 3, 7, 9$, corresponding to Kodaira types $\mathrm{II}$, $\mathrm{IV}$, $\mathrm{IV}^\ast$ and $\mathrm{II}^\ast$, respectively.  

We refrain from writing out all possible algebras, simply writing $\overline{\Inf}_{\!f+d}^{(3)}$ as given by the relations
$$\eps_0\eps_1-\bar\zeta_{f+d}\eps_1\eps_0, \quad \eps_2\eps_0 - \bar\zeta_{f+d}^{-2}\eps_0\eps_2, \quad\eps_2\eps_1 - \bar\zeta_{f+d}^{-1}\eps_1\eps_2, \quad $$ with $3\leq f\leq 5$ and  $d\in\{1, 3, 7, 9\}$. The case $f=2$ was treated above. 


%
%
%
%

\subsubsection{Multiplicative reduction}
Now, when the reduction is multiplicative the number of components is $d=-v_p(j(E))>1$, corresponding to the Kodaira type $\mathrm{I}_d$.

For the two examples below we fix $S$ and $(a, b)$  and write $\boldsymbol{\theta}=\boldsymbol{\theta}_{(a,b);S}=\sum_0^{p-1}\theta_i\beta^i$, with $\theta_i\in L$. I tried to find explicit curves with $K$-rational points so that some choice of $\boldsymbol{\theta}$ would be manageable, i.e., would be possible to write down in a reasonable way, but did not succeed.  

\begin{example}Suppose that $E$ has Kodaira type $\mathrm{I}_2$. Then
$$\overline{\Inf}_{\!\mathrm{I}_2}:=\frac{k(\bar\zeta_3)\langle \eps_0, \eps_1, \eps_2\rangle}{
\Big(\eps_0\eps_1-\bar\zeta_3\eps_1\eps_0, \,\,\eps_2\eps_0 - \bar\zeta_3^{1-p}\eps_0\eps_2,\,\,\eps_2\eps_1 -\bar\zeta_3^{2-p} \eps_1\eps_2
\Big)}.$$The centre $\cent(\overline{\Inf}_{\!\mathrm{I}_2})$ includes the algebra $k(\bar\zeta_3)[\eps_0^3, \eps_1^3, \eps_2^3]$, so $\Spec\big(\cent(\overline{\Inf}_{\!\mathrm{I}_2})\big)$, and by convention, $\Xscr_{\overline{\Inf}_{\!\mathrm{I}_2}}$, can be seen as an $\mathbb{A}^3$-fibration. 

Assume given a $\mathbf{z}$ such that $\bar\theta_i$  is not zero. Put $\bar{\mathfrak{N}}_i=(0, 0, \bar\theta_i)$.
From theorem \ref{thm:ext_inf} we see that 
$$T_{\fam{N}_i}=\Ext_{\overline{\Inf}_{\!\mathrm{I}_2}}^1\!\!(\bar{\mathfrak{N}}_i, \bar{\mathfrak{N}}')=\begin{cases}
k & \text{if $\bar\theta' = \bar\zeta^{2-p}_3\bar\theta_i$}\\
0, &\text{otherwise,}
\end{cases},$$where, of course, $\bar{\mathfrak{N}}'=(0,0,\bar{\boldsymbol{\theta}}')$ and $\fam{N}_i:= \{\bar{\mathfrak{N}}_i, \bar{\mathfrak{N}}'\}$. Hence $\bar{\mathfrak{N}}'=\bar{\mathfrak{N}}_i^\perp$. Notice that if $p=3$, $\bar\zeta_3=1$ so $\overline{\Inf}_{\!\mathrm{I}_2}^{(3)}=\mathbb{A}^3$. 
\end{example}

\begin{example}For $E$ of Kodaira type $\mathrm{I}_5$, we find
$$\overline{\Inf}_{\mathrm{I}_5}=\frac{k(\bar\zeta_{6})\langle \eps_0, \eps_1, \eps_2\rangle}{
\Big(\eps_0\eps_1-\bar\zeta_6\eps_1\eps_0, \,\,\eps_2\eps_0 -\bar\zeta_3^{1-p} \eps_0\eps_2,\,\,\eps_2\eps_1 +\bar\zeta_3^{2-p} \eps_1\eps_2
\Big)}.$$The slightly different relations here follow from $p$ being an odd prime and $\bar\zeta_6=\bar\zeta_2\bar\zeta_3$. 

In this case the centre is quite complicated but it certainly includes the algebra $k(\bar\zeta_6)[\eps_0^6, \eps_1^6, \eps_2^6]$ (hence $\Spec\big(\cent(\overline{\Inf}_{\!\mathrm{I}_2})\big)$ and $\Xscr_{\overline{\Inf}_{\!\mathrm{I}_2}}$ can be seen as $\mathbb{A}^3$-fibrations in this case also). With the same notation as in the previous example we find 
$$T_{\fam{N}_i}=\Ext_{\overline{\Inf}_{\mathrm{I}_5}}^1\!\!(\bar{\mathfrak{N}}_i, \bar{\mathfrak{N}}')=\begin{cases}
k & \text{if $\bar\theta' = -\bar\zeta^{2-p}_3\bar\theta_i$}\\
0, &\text{otherwise,}
\end{cases} $$following from theorem \ref{thm:ext_inf}.
\end{example}

The case of additive reduction is more subtle as was alluded to above.  The possible number of components are $1\leq d\leq 9$ but it will turn out that not all of these are possible. 

\subsubsection{Additive reduction}
We will look at bit deeper into the case of additive reduction. The following theorem is \cite[Theorem 12]{KostersPannekoek}. 

\begin{thm} \label{thm:KostersPannekoek}Let $E_{/K}$ be an elliptic curve with additive reduction at $\qq_K\mid p$. Then
\begin{itemize}
	\item[(i)] $E_0(K)\simeq \widehat{E}(\oo_K)$ as (topological) groups, and
	\item[(ii)] if $6e(\qq)<p-1$, then $E_0(K)\simeq \Z_p^n$, as $\Z_p$-modules. When $6e(\qq)\geq p-1$, we have $E_0(K)\simeq \Z_p^n\times \Z/p$. 
\end{itemize}Here $n$ is the degree of $K/\Q_p$. 
\end{thm}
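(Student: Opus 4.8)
The plan is to reduce both statements to the structure theory of formal groups over $\oo_K$, exploiting the fact that additive reduction forces the connected component $E_0(K)$ to coincide with the kernel of reduction. First I would recall that for an elliptic curve $E_{/K}$ with N\'eron model $\Ecal$, the filtration $E(K)\supseteq E_0(K)\supseteq E_1(K)$ identifies $E_1(K)$ with $\widehat E(\mm_K)$, the $\mm_K$-valued points of the formal group of $E$. When the reduction is additive, the special fibre $\bar\Ecal^0(k)$ is isomorphic to $\mathbb G_a(k)$, which has no prime-to-$p$ torsion and, being $p$-torsion as a group annihilated by $p$ in characteristic $p$, contributes nothing to the pro-$p$ completion beyond a bounded $p$-group; more precisely one shows $E_0(K)/E_1(K)\simeq \bar\Ecal^0(k)$ is killed by $p$. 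The key point for (i) is that the index $[E_0(K):E_1(K)]$ is then finite and $p$-power, so at the level of $\Z_p$-modules (after pro-$p$ completion, or since $E_0(K)$ is already a pro-$p$ group here) the inclusion $E_1(K)=\widehat E(\oo_K)\hookrightarrow E_0(K)$ becomes an isomorphism; this is where I expect a small but genuine argument about why the $\mathbb G_a$-quotient is absorbed, and it is the main obstacle.

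For part (i), after the above reduction I would invoke the standard fact (Silverman, \emph{Advanced Topics}, IV--VII) that $\widehat E(\oo_K)$ depends, as a topological group, only on the formal group law up to isomorphism over $\oo_K$, and that the additive-reduction hypothesis pins this down enough that $E_0(K)\simeq\widehat E(\oo_K)$ with no extra component-group factor; the isomorphism is the reduction-kernel inclusion itself once one checks it is surjective modulo the (trivial, after completion) $p$-torsion coming from $\mathbb G_a(k)$. Concretely: $E_0(K)$ is a finitely generated $\Z_p$-module (it is compact and pro-$p$), $E_1(K)$ is a finite-index submodule, and a finite-index $\Z_p$-submodule of a finitely generated $\Z_p$-module of the same $\Z_p$-rank that is moreover \emph{torsion-free saturated} must be the whole module --- so the work is in establishing torsion-freeness of the quotient, i.e. that $E_0(K)$ has no more torsion than $E_1(K)$.

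For part (ii) I would apply the classical theorem on the structure of the formal group: for a one-dimensional formal group over $\oo_K$ with $K/\Q_p$ of degree $n$, the logarithm gives an isomorphism $\widehat E(\mm_K^j)\simeq (\mm_K^j,+)\simeq \oo_K\simeq \Z_p^n$ for $j$ large enough that the logarithm and exponential converge and are mutually inverse on $\mm_K^j$; the precise threshold is $j > e(\qq)/(p-1)$. One then has $\widehat E(\oo_K)\simeq \widehat E(\mm_K^j)\oplus (\text{finite }p\text{-group})$, and the finite $p$-group is the torsion subgroup of $\widehat E(\oo_K)$, which by a valuation count on $p$-torsion points of the formal group (Silverman, \emph{Advanced Topics}, Prop.~IV.something, or Lutz's lemma) is trivial exactly when $e(\qq)/(p-1)<1$, equivalently $e(\qq)<p-1$, and is cyclic of order $p$ in the boundary range. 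Combining with part (i) gives $E_0(K)\simeq\Z_p^n$ when $6e(\qq)<p-1$ and $E_0(K)\simeq\Z_p^n\times\Z/p$ when $6e(\qq)\ge p-1$; the factor $6$ (rather than $1$) enters because under additive reduction one must pass from the N\'eron model to a model acquiring good (or at worst multiplicative) reduction after a tame base change of degree dividing $12$, and controlling the torsion of $E_0$ rather than of the bare formal group costs the extra factor in the inequality. Since this is precisely the content of \cite[Theorem 12]{KostersPannekoek}, the cleanest write-up simply cites it; the sketch above indicates how one would reconstruct it from formal-group structure theory and the N\'eron--Ogg--Shafarevich circle of ideas if a self-contained proof were wanted.
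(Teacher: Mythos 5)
The paper offers no argument for this statement at all: it is imported verbatim, the ``proof'' being the citation \cite[Theorem 12]{KostersPannekoek}. So your closing remark that the cleanest write-up simply cites the reference coincides exactly with what the paper does, and to that extent you match it. The issue is with the reconstruction you sketch, which, read as a proof, contains a genuine error concentrated in part (i).

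You propose to prove (i) by showing that the inclusion $E_1(K)=\widehat{E}(\mm_K)\hookrightarrow E_0(K)$ ``becomes an isomorphism'', the remaining work being ``torsion-freeness of the quotient'', i.e.\ that $E_0(K)$ has no more torsion than $E_1(K)$. This cannot work. In the additive case the reduction map $E_0(K)\to\bar\Ecal^0(k)\simeq (k,+)$ is surjective (smoothness of $\Ecal^0$ plus completeness of $\oo_K$), so the quotient $E_0(K)/E_1(K)\simeq (k,+)\simeq(\Z/p)^{[k:\F_p]}$ is a nonzero torsion group: it is never torsion-free and the inclusion is never surjective, with or without pro-$p$ completion. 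Worse, the torsion of $E_0(K)$ genuinely can exceed that of the formal group: the paper's own example \textsf{50b2} over $\Q_5$ has additive reduction of type $\mathrm{II}$ (so $E(\Q_5)=E_0(\Q_5)$), a rational point of exact order $5$ with nonsingular reduction, while $\widehat{E}(\mm)$ is torsion-free since $e=1<p-1$; so no argument of the form ``torsion does not grow, hence the finite-index inclusion is an isomorphism'' can establish (i), which is an abstract isomorphism of topological groups obtained in Kosters--Pannekoek by a different analysis (comparison of the two sides after the base change over which $E$ acquires semistable reduction, keeping track of where the $p$-torsion sits relative to the filtration). Your treatment of (ii) inherits the same problem, since you derive it by ``combining with part (i)''; also the torsion of $\widehat{E}(\mm_K)$ is not ``trivial exactly when $e<p-1$'' (triviality can persist for larger $e$), and the relevant bound for the ramification degree of the semistabilising extension is $6$, not $12$. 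So the proposal is fine insofar as it defers to the citation, but the sketch does not reconstruct the quoted theorem and its central step for (i) is false.
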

\begin{thm}\label{thm:addreduction}
Let $E_{/K}$ be an elliptic curve with a $K$-rational $p$-torsion point and $e(\qq)=1$. Then  $E_{/K}$ cannot have additive reduction at $\qq_K$, unless $p=2, 3, 5$ or $7$.
\end{thm}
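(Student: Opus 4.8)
The statement to prove is that an elliptic curve $E_{/K}$ with a $K$-rational $p$-torsion point and $e(\qq)=1$ cannot have additive reduction at $\qq_K \mid p$ unless $p \in \{2,3,5,7\}$. The plan is to combine the structure theorem for $E_0(K)$ from Theorem \ref{thm:KostersPannekoek} with the fact that a $K$-rational $p$-torsion point, if it reduces into the identity component, must lie in $E_0(K)[p]$, and then run a case analysis on the component group. First I would recall that $E(K)/E_0(K) \hookrightarrow \Phi$, the component group of the special fibre, which for additive reduction is one of the finite groups appearing in Kodaira's classification (order dividing $4$, except for type $\mathrm{I}^*_n$; more precisely $\Phi$ is trivial, $\Z/2$, $(\Z/2)^2$, $\Z/3$, or $\Z/4$ for the potentially-good additive types, and $\Z/2 \times \Z/2$ or $\Z/4$ for $\mathrm{I}^*_n$). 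Thus a $p$-torsion point with $p \geq 5$ coprime to $|\Phi|$ must already lie in $E_0(K)$.

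The heart of the argument is then the following: if $p \geq 5$ and $E$ has additive reduction with a $K$-rational $p$-torsion point $\mathtt{P}$, then $\mathtt{P} \in E_0(K)[p]$, so $E_0(K)$ has nontrivial $p$-torsion. By Theorem \ref{thm:KostersPannekoek}(i), $E_0(K) \simeq \widehat{E}(\oo_K)$, the group of $\oo_K$-points of the formal group, and by part (ii) this group is torsion-free (isomorphic to $\Z_p^n$) when $6e(\qq) < p-1$, and has $p$-torsion at most $\Z/p$ when $6e(\qq) \geq p-1$. Since we assume $e(\qq)=1$, the torsion-free case occurs precisely when $6 < p-1$, i.e. $p > 7$, i.e. $p \geq 11$ (as $p$ is prime). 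Therefore for $p \geq 11$ the group $E_0(K)$ is torsion-free, contradicting the existence of $\mathtt{P}$. This rules out all primes $p \geq 11$, leaving exactly $p \in \{2,3,5,7\}$ (note $p=2$ and $p=3$ are borderline/excluded-hypothesis cases that the statement simply permits, and $p=5,7$ satisfy $6e(\qq)=6 \geq p-1$).

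The main obstacle I anticipate is handling the boundary correctly for the primes just below the threshold and making sure the component-group step is airtight: one must verify that for $p \geq 5$ the $p$-torsion point genuinely reduces into the identity component, which requires knowing $p \nmid |\Phi|$ for \emph{all} additive Kodaira types — and indeed $|\Phi| \in \{1,2,3,4\}$ for all additive reduction types over a local field, so $p \geq 5$ coprime to $|\Phi|$ always works. A secondary subtlety is that Theorem \ref{thm:KostersPannekoek}(ii) gives $E_0(K) \simeq \Z_p^n \times \Z/p$ when $6e(\qq) \geq p-1$; with $e(\qq)=1$ this covers $p-1 \leq 6$, i.e. $p \leq 7$, and for those primes the $\Z/p$ summand is exactly where a $p$-torsion point can live — so the theorem is sharp, and one should remark that $p=5,7$ are genuinely realized rather than merely not excluded.

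\begin{proof}
Suppose $E_{/K}$ has additive reduction at $\qq_K \mid p$ and admits a nontrivial $K$-rational $p$-torsion point $\mathtt{P}$, with $e(\qq)=1$. Let $\Phi$ denote the group of components of the special fibre $\bar\Ecal$. For additive reduction over a local field, Kodaira's classification gives $|\Phi| \in \{1,2,3,4\}$. In particular, if $p \geq 5$ then $p \nmid |\Phi|$, so the image of $\mathtt{P}$ under the reduction map $E(K) \to E(K)/E_0(K) \hookrightarrow \Phi$ is trivial; that is, $\mathtt{P} \in E_0(K)$, and hence $E_0(K)$ contains a point of order $p$.

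By Theorem \ref{thm:KostersPannekoek}(i), $E_0(K) \simeq \widehat{E}(\oo_K)$ as topological groups. By Theorem \ref{thm:KostersPannekoek}(ii), since $e(\qq)=1$: if $6 < p-1$, i.e. $p \geq 11$ (as $p$ is prime), then $E_0(K) \simeq \Z_p^n$ with $n = [K:\Q_p]$, which is torsion-free. This contradicts the existence of a point of order $p$ in $E_0(K)$. Hence no prime $p \geq 11$ can occur.

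Therefore the only primes for which additive reduction is compatible with a $K$-rational $p$-torsion point (under the standing hypotheses) are $p \in \{2,3,5,7\}$. For $p = 5$ and $p = 7$ we have $6e(\qq) = 6 \geq p-1$, so Theorem \ref{thm:KostersPannekoek}(ii) gives $E_0(K) \simeq \Z_p^n \times \Z/p$, and the $p$-torsion point may indeed live in the $\Z/p$-summand; thus these cases are not excluded. The cases $p = 2, 3$ fall outside the simplifying hypothesis $p > 2$ (and, for $p=3$, are subject to the wild-ramification subtleties noted earlier) and are simply permitted by the statement.
\end{proof}
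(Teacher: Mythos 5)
Your proof is correct and follows essentially the same route as the paper: the key input in both is Theorem \ref{thm:KostersPannekoek}(ii), which with $e(\qq)=1$ forces $E_0(K)\simeq\Z_p^n$ (torsion-free) for $p\geq 11$. Your explicit component-group step (that $p\geq 5$ is coprime to $|\Phi|\in\{1,2,3,4\}$, so the torsion point lies in $E_0(K)$) is a welcome addition, since the paper's one-line proof leaves this to the surrounding discussion, where it is justified via $\Phi(k)[p]=\{0\}$ for $p>3$ from Silverman's Table IV.4.1.
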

\begin{proof}By theorem \ref{thm:KostersPannekoek} (ii), if $e(\qq)=1$, then $E_0(K)\simeq \Z_p^n$ when $p\geq 11$. Hence the only possibilities to have $p$-torsion in the additive reduction case are when $p=2, 3, 5, 7$. 
\end{proof}

Let $E_{/K}$ be an elliptic curve with additive reduction at $\qq_K$ and with Weierstrass equation
\begin{equation}\label{eq:WeierKP}
E_{/K}:\quad y^2+a_1 xy + a_3y = x^3+a_2x^2+a_4x+ a_6.
\end{equation} In fact, \cite[Lemma 9]{KostersPannekoek} shows that the coefficients can be chosen so that they are all in the maximal ideal $\mathfrak{m}\subset \oo_K$. 

Recall that, for an elliptic curve $E_{/K}$, there is an exact sequence 
$$\xymatrix{0\ar[r]&E_1(K)\ar[r]&E_0(K)\ar[r]^>>>>>{\mathrm{red}}&\bar E_{\mathrm{sm}}(k)\ar[r]& 0},$$where $E_0(K)$ is the set of $K$-rational points on $E$ reducing to the smooth locus $\bar E_{\mathrm{sm}}$ on the reduced curve $\bar E$ and where $E_1(K)$ is the kernel of reduction. Also, $E_1(K)\simeq \widehat{E}(\oo_K)$, where $\widehat{E}$ is the formal group of $E$. See \cite[Propositions 2.1 and 2.2, chapter VII]{SilvermanElliptic} for details. 
 Let $E_{/K}$ have additive reduction and let $\Ecal_{/\oo_K}$ be its minimal regular model. Then $$\bar\Ecal^0(k)\simeq \bar{E}_\mathrm{sm}(k),$$ where $\bar\Ecal^0$ is the connected component of the identity in the closed fibre, by \cite[IV.9.2(c)]{SilvermanAdvanced}. Also, there is an exact sequence of group schemes
\begin{equation}\label{eq:neroncomp}
\xymatrix{0\ar[r]&\Ecal^0\ar[r]&\Ecal\ar[r]&\Phi.}
\end{equation}From this sequence and \cite[IV.9.2(a,b)]{SilvermanAdvanced} follows the exact sequences
$$\xymatrix{0\ar[r]&\Ecal^0(\oo_K)\ar[r]&\Ecal(\oo_K)\ar[r]&\Phi(\oo_K)\ar[r]&0\\
0\ar[r]&E_0(K)\ar@{=}[u]\ar[r]&E(K)\ar@{=}[u]\ar[r]&\Phi(K)\ar@{=}[d]\ar@{=}[u]\ar[r]&0\\
&&&E(K)/E_0(K)}$$and
$$\xymatrix@C=18pt{0\ar[r]&\Ecal^0(\oo_K)[p^\infty]\ar[r]&\Ecal(\oo_K)[p^\infty]\ar[r]&\Phi(\oo_K)[p^\infty]\\
0\ar[r]&E_0(K)[p^\infty]\ar@{=}[u]\ar[r]&E(K)[p^\infty]\ar@{=}[u]\ar[r]&\Phi(K)[p^\infty]\ar@{=}[r]\ar@{=}[u]&\big(E(K)/E_0(K)\big)[p^\infty].}$$
In addition, \cite[IV.9.2(b)]{SilvermanAdvanced} gives that 
$$\Phi(K)=E(K)/E_0(K)\simeq \bar\Ecal^0(k)=\Phi(k).$$ The group $\Phi(k)$ is the group of components of $\bar\Ecal$. 

Now, table \cite[Table IV.4.1]{SilvermanAdvanced} shows that $\Phi(k)[p]=\{0\}$ if $p>3$ and hence $E_0(K)[p]\simeq E(K)[p]$. Therefore, all $K$-rational $p$-torsion points must reduce to non-singular points on $\bar{E}(k)=\bar\Ecal(k)$. Theorem \ref{thm:KostersPannekoek} now implies that, if $p>7$, $E_0(K)$ is $p$-torsion-free, and hence also $E(K)$. 

Let us begin by looking at the cases $p=5$ or $7$. Since $E(K)[p]\simeq E_0(K)[p]$, the discussion in section 3.3.1 in \cite{KostersPannekoek} implies that $$E(K)[p]\simeq \Z_p^n\times\Z/p,\qquad\text{(with $n=[K:\Q_p]$)}$$ if and only if $f(T):=T-aT^p$ has a non-trivial solution in $k$, where $a=3a_4/5 \,\, (\text{mod $\qq$})$ when $p=5$ and $a=4a_6/7 \,\, (\text{mod $\qq$})$, when $p=7$, and where $a_4$, $a_6$ are the coefficients given by the Weierstrass model (\ref{eq:WeierKP}). See \cite[Section 3.3.1]{KostersPannekoek} for the details. It is important to be aware that these congruences refer \emph{only} to the Weierstrass model (\ref{eq:WeierKP}) where the coefficients are in $\mathfrak{m}$. Incidentally, the polynomial $f(T)$ is the reduction of the power series $[p](T)$ in the formal group of $E_{/K}$.

If a solution to the these congruences does not exist, $E(K)[p]=\Z_p^n$ and hence there are no non-trivial $K$-rational $p$-torsion points. 
The above comment indicates that the number of curves with additive reduction with a $K$-rational $p$-torsion point is quite rare for $p=5$ and rarer still for $p=7$. In fact, the first curve where the curve has additive reduction with a non-trivial $\Q_7$-rational point is \textsf{294b2} in Cremona's table \cite{Cremona}. The next curve after this  is the curve \textsf{490k2}.

The following two examples show that existence and non-existence of rational $p$-torsion points can occur for both $p=5$ and $p=7$ in the additive reduction case.  The computations are done with Sage \cite{sagemath}.
\begin{example}Let $E_{/\Q_5}$ be the elliptic curve given by the Weierstrass equation
\begin{equation}\label{eq:50b2}
y^2 + xy+y = x^3+x^2+22x-9. \tag{\text{\textsf{50b2}}}
\end{equation}
This curve has bad additive reduction (of type $\mathrm{II}$) and is \textsf{50b2} in Cremona's table \cite{Cremona}. The $\Q_5$-rational $5$-torsion points are
$$\Big\{\infty,\, (1 : -5 : 1),\, (1 : 3 : 1),\, (9 : -37 : 1),\, (9 : 27 : 1)\Big\}$$ and so $E(\Q_5)[5]=E(\Q)[5]\simeq \Z/5$. All these have non-singular reduction. On the other hand, the curve \textsf{50a4} also has bad additive reduction (of type $\mathrm{IV}^\ast$). However, this time the only rational $5$-torsion point is $\{\infty\}=\{[0:1:0]\}$. 
\end{example}
\begin{example}Now, let $E_{/\Q_7}$ be the elliptic curve \textsf{49a3} given by the Weierstrass equation
\begin{equation}\label{eq:49a3}
y^2 + xy = x^3 - x^2 - 107x + 552. \tag{\text{\textsf{49a3}}}
\end{equation}
This curve has bad additive reduction of type $\mathrm{III}^\ast$. The only $\Q_7$-rational $7$-torsion points is $\{\infty\}=\{[0:1:0]\}$. Looking now at the curve $E'_{/\Q_7}$
\begin{equation}\label{eq:490k2}
y^2 + xy + y = x^3 - x^2 + 918x + 5289 \tag{\text{\textsf{490k2}}}
\end{equation}of bad additive Kodaira type $\mathrm{II}$, one finds that 
\begin{multline*}E'(\Q_7)[7]=E'(\Q)[7]=\Big\{\infty,\, (-3 : -49 : 1),\, (-3 : 51 : 1),\, (17 : -169 : 1),\\
 (17 : 151 : 1),\, (97 : -1049 : 1),\, (97 : 951 : 1)\Big\},
 \end{multline*}all of non-singular reduction.
\end{example}

When $p=5$ or $7$, the conductor is $f=2$ for type $\mathrm{II}$ and $\mathrm{III}$. In the case $p=7$ the only possibility is $d=1$ (Kodaira symbol $\mathrm{II}$) and so, since all $3$-rd roots of unity are in $\F_7$,     
$$\overline{\Inf}_{\mathrm{II}}^{(7)}=\frac{k\langle \eps_0, \eps_1, \eps_2\rangle}{
\Big(\eps_0\eps_1-\bar\zeta_3\eps_1\eps_0, \,\,\eps_2\eps_0 - \eps_0\eps_2,\,\,\eps_2\eps_1 -\bar\zeta_3^{-2} \eps_1\eps_2
\Big)}.$$When $p=5$ we find the algebras 
$$\overline{\Inf}_{\mathrm{III}}^{(5)}=\frac{k\langle \eps_0, \eps_1, \eps_2\rangle}{
\Big(\eps_0\eps_1-\bar\zeta_4\eps_1\eps_0, \,\,\eps_2\eps_0 - \eps_0\eps_2,\,\,\eps_2\eps_1 +\bar\zeta_4 \eps_1\eps_2
\Big)},$$ where $\bar\zeta_4\in \F_5$, and
$$\overline{\Inf}_{\mathrm{II}}^{(5)}=\frac{k(\bar\zeta_3)\langle \eps_0, \eps_1, \eps_2\rangle}{
\Big(\eps_0\eps_1-\bar\zeta_3\eps_1\eps_0, \,\,\eps_2\eps_0 -\bar\zeta_3^{-1} \eps_0\eps_2,\,\,\eps_2\eps_1 - \eps_1\eps_2
\Big)}.$$ Note that, depending on the degree of $k/\F_5$, $\bar\zeta_3$ may or may not be in $k$ already. The degree $[k(\bar\zeta_3):k]$ is either $1$ or $2$.

In the case $p=3$, the group $\Phi(k)=\Phi(K)$ is not necessarily trivial. Therefore, we cannot conclude from (\ref{eq:neroncomp}) that 
$$E_0(K)[3]\simeq E(K)[3].$$ In fact, this is false in general. Thanks to Chris Wuthrich for indicating the following examples. Computations once again courtesy of Sage \cite{sagemath}.
\begin{example}\label{exam:27a1}Let $E_{/\Q_3}$ be the elliptic curve
\begin{equation}\label{eq:27a1}
y^2+y=x^3-7\tag{\text{\textsf{27a1}}}
\end{equation}
over $\Q_3$. Then $$E(\Q_3)[3^\infty]\simeq \Z/3$$ and only $\{\infty\}=\{[0:1:0]\}$ has good reduction. In fact, $E(\Q)[3]=E(\Q_3)[3]=E(\Q_3)[3^\infty]$. This curve has Kodaira type $\mathrm{IV}^\ast$.
\end{example}
\begin{example}Now, let $E_{/\Q_3}$ be Cremona's curve \textsf{54b3}: 
\begin{equation}\label{eq:54b3}
y^2+xy+y = x^3-x^2-14x+29. \tag{\text{\textsf{54b3}}}
\end{equation}
In this example
$$E(\Q_3)[3^\infty]\simeq \Z/9$$ and three $\Q_3$-rational points have good reduction (incidentally, these are the points of $E(\Q)[3]$). The Kodaira symbol for \textsf{54b3} is $\mathrm{IV}$. 
\end{example}

The possibility for the $p$-torsion points to reduce into the singular locus is measured by $\Phi(k)=\Phi(K)=E(K)/E_0(K)$ and when $p=3$ the only possibility is $\Phi(K)=\Z/3$, which corresponds to the Kodaira symbols $\mathrm{IV}$ and $\mathrm{IV}^\ast$. The reason for this is that $E(K)[3]/E_0(K)[3]$ is a subgroup of $\Phi(K)$ of order $3$. Hence $3$ must be a divisor of the order of $\Phi(K)$. According to \cite[Table IV.4.1]{SilvermanAdvanced} this corresponds to $E(K)[3]/E_0(K)[3]=E(K)/E_0(K)=\Z/3$ and the Kodaira symbols $\mathrm{IV}$ and $\mathrm{IV}^\ast$. 

Another possibility is that $E(K)[3]/E_0(K)[3]$ is trivial while $E(K)/E_0(K)=(0)$ or $E(K)/E_0(K)=\Z/2$. In this case we get the symbols $\mathrm{II}$ and $\mathrm{III}$. This happens for instance for \textsf{54b1} (which is of type $\mathrm{II}$) and \textsf{90b2} (which is of type $\mathrm{III}$). Note that $E(K)[3]/E_0(K)[3]$ and $E(K)/E_0(K)=(0)$ can both be trivial without $E$ having good reduction (when $\bar E$ is a cusp with only one component). 

On the other hand, the Kodaira symbols I have found for the $3$-torsion reducing to the smooth locus, for curves over $\Q_3$, are $\mathrm{II}$, $\mathrm{III}$, $\mathrm{IV}$, $\mathrm{I}_0^\ast$, $\mathrm{I}_1^\ast$, $\mathrm{I}_2^\ast$, $\mathrm{I}_3^\ast$ and $\mathrm{I}_4^\ast$. At the moment I'm not sure whether $\mathrm{I}_n^\ast$ can appear for $n\geq 5$ or not (and, in that case, if there is an upper bound). This shouldn't be too difficult to prove or disprove. 

\begin{remark}It is important to remember that all the above claims are made under the assumption that $E$ has a \emph{$K$-rational} $3$-torsion point. 
\end{remark}

Summarising the additive case when $p=3$:
\begin{thm}\label{thm:sum_additive}
Suppose $E_{/\Q_3}$ has additive reduction. The relations for $\overline{\Inf}_{\!f+d}^{(3)}$ are 
$$\eps_0\eps_1-\bar\zeta_{f+d}\eps_1\eps_0, \quad \eps_2\eps_0 - \bar\zeta_{f+d}^{-2}\eps_0\eps_2, \quad\eps_2\eps_1 - \bar\zeta_{f+d}^{-1}\eps_1\eps_2$$ and the possibilities for $d$ and $f$ are (remember, $2\leq f\leq 5$):
\begin{itemize}
	\item[$\mathrm{II}$:] $d=1$, $f>2$;
	\item[$\mathrm{III}$:] $d=2$, $f=2$;
	\item[$\mathrm{IV}$:] $d=3$, $f>2$;
	\item[$\mathrm{I}^\ast_n$:] $d=5+n$, $f=2$, and
	\item[$\mathrm{IV}^\ast$:] $d=7$, $f>2$.
\end{itemize}Depending on $d$, $f$ and $[k:\F_3]$, $\bar\zeta_{f+d}$ may, or may not, be in $k$ already. 
\end{thm}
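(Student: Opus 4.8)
The plan is to derive the statement by assembling the material already developed in this section, treating its three parts separately — the defining relations of $\overline{\Inf}_{\!f+d}^{(3)}$, the admissible pairs $(d,f)$, and the rationality of $\bar\zeta_{f+d}$ — with essentially all of the work concentrated in the middle part.

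For the relations there is nothing new to do: putting $\lambda=\zeta_{f+d}$ in the post-substitution presentation of $\Inf_{\!\lambda}$, reducing modulo the uniformiser $p=3$, and using $1-p=-2$, $2-p=-1$, produces exactly $\eps_0\eps_1-\bar\zeta_{f+d}\eps_1\eps_0$, $\eps_2\eps_0-\bar\zeta_{f+d}^{-2}\eps_0\eps_2$, $\eps_2\eps_1-\bar\zeta_{f+d}^{-1}\eps_1\eps_2$, which is what was recorded in the $p=3$ subsection. For the last clause, $\bar\zeta_{f+d}$ generates $\F_3(\bar\zeta_{f+d})$ over $\F_3$ with degree equal to the multiplicative order of $3$ modulo the prime-to-$3$ part of $f+d$; this is $1$ for some values of $f+d$ and larger for others, and over a larger residue field $k=\F_{3^m}$ it depends on $m$ as well, so $\bar\zeta_{f+d}$ may or may not already lie in $k$, as claimed.

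For the list of $(d,f)$: recall that Theorem IV.10.4 of \cite{SilvermanAdvanced} bounds $2\le f\le 5$ for additive reduction at $p=3$, and that Table IV.4.1 of \cite{SilvermanAdvanced} forces $d\in\{1,3,7,9\}$ (types $\mathrm{II},\mathrm{IV},\mathrm{IV}^\ast,\mathrm{II}^\ast$) in the wild range $f>2$, leaving $f=2$ for the tame types. Now impose $E(\Q_3)[3]\ne 0$; from the exact sequence (\ref{eq:neroncomp}) and its torsion refinement, $E(\Q_3)[3]/E_0(\Q_3)[3]$ embeds into $\Phi(\Q_3)$. If this quotient is nonzero it equals $\Z/3$, so $3\mid\#\Phi(\Q_3)$, forcing Kodaira type $\mathrm{IV}$ or $\mathrm{IV}^\ast$, i.e.\ $d=3$ or $d=7$, both in the wild range so $f>2$. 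If the quotient is zero then $E_0(\Q_3)[3]\ne 0$, so by Theorem \ref{thm:KostersPannekoek}(ii) (with $e(\qq)=1$, whence $6e(\qq)\ge p-1$) one has $E_0(\Q_3)\cong\Z_3\times\Z/3$; the types realising this are read off from the component-group data and the Sage computations recorded above, namely $\mathrm{II}$ ($d=1$), $\mathrm{III}$ ($d=2$) and $\mathrm{I}_n^\ast$ ($d=5+n$). In each of these the value of $f$ is then fixed by Ogg's formula $f+d=v_3(\Delta^\mathrm{min}_E)+1$ together with the degree of the minimal extension over which $E$ attains semistable reduction: degree $6$, hence wildly ramified at $3$, for $\mathrm{II}$ (so $f>2$); degree $4$, tame at $3$, for $\mathrm{III}$ (so $f=2$); and a ramified quadratic, tame at $3$, after which the reduction is multiplicative, for $\mathrm{I}_n^\ast$ (so $f=2$). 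This yields the five lines of the statement.

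The step I expect to be the main obstacle is proving that types $\mathrm{II}^\ast$ (the one remaining $f>2$ type) and $\mathrm{III}^\ast$ (an $f=2$ type) genuinely cannot occur together with a $\Q_3$-rational $3$-torsion point: the cases $\mathrm{IV},\mathrm{IV}^\ast$ and $\mathrm{II},\mathrm{III},\mathrm{I}_n^\ast$ assemble cleanly from quoted results, but excluding $\mathrm{II}^\ast,\mathrm{III}^\ast$ should be carried out by an inertia-image argument in the spirit of the supersingular case treated above (compare \cite[Prop.\ 12]{Serre_Galois_Elliptic}): for a curve of type $\mathrm{II}^\ast$ or $\mathrm{III}^\ast$ over $\Q_3$ the image of inertia $I_{\Q_3}$ in $\mathrm{Aut}(E[3])$ — including, for $\mathrm{II}^\ast$, its wild part — is too large to fix a nonzero vector, contradicting $E(\Q_3)[3]\ne 0$; equivalently, via Theorem \ref{thm:KostersPannekoek} and the component group, $E_0(\Q_3)[3]$ cannot be supported on these types. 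One should also note that whether $\mathrm{I}_n^\ast$ can occur for $n\ge 5$ is left open above, so "possibilities" in the statement is to be understood as the list of types whose occurrence is established or not excluded, the $\mathrm{I}_n^\ast$ entry covering all such $n$.
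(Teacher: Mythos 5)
Most of your proposal tracks the paper's own (informal) derivation of this theorem: the relations are obtained by the same evaluation $\lambda\mapsto\zeta_{f+d}$ and reduction, the component-group argument pinning down $\mathrm{IV}$ and $\mathrm{IV}^\ast$ when the $3$-torsion point reduces into the singular locus is exactly the one in the text preceding the statement, and the tame/wild dichotomy you use for the $f$-values agrees with the paper's appeal to \cite[Table IV.4.1]{SilvermanAdvanced}. The genuine issue is exhaustiveness, i.e.\ the exclusion of $\mathrm{III}^\ast$ ($d=8$) and $\mathrm{II}^\ast$ ($d=9$), which you yourself flag as the main obstacle but do not carry out. Note that your middle branch does no exclusion work at all: theorem \ref{thm:KostersPannekoek}(ii) with $6e(\qq)\geq p-1$ permits $E_0(K)\simeq\Z_3^n\times\Z/3$ for \emph{every} additive Kodaira type, so ``read off from the component-group data and the Sage computations'' is not an argument, and the whole weight of the theorem's completeness falls on the sketched inertia step. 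That sketch is not a routine transfer of the supersingular case: \cite[Proposition 12]{Serre_Galois_Elliptic} works at $\ell=p$ because supersingular reduction forces the inertia image to be cyclic of order $p^2-1$ via the level-two fundamental character, and the contradiction is a degree count. For a tame additive type such as $\mathrm{III}^\ast$ at $p=3$, the image of inertia on $E[3]$ is constrained only to have determinant the mod-$3$ cyclotomic character; since the field $M$ over which good reduction is attained has $e(M/\Q_3)=4\geq p-1$, the formal group over $M$ may itself contain $3$-torsion, so an inertia image with a nonzero fixed vector is not formally excluded and a genuinely $3$-adic computation (e.g.\ with the reduction of $[p](T)$ in the formal group as in \cite[Section 3.3.1]{KostersPannekoek}, plus control of the wild part for $\mathrm{II}^\ast$) would be needed.

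For calibration against the source: the paper does not prove this step either. The theorem is introduced as a summary (``Summarising the additive case when $p=3$'') of a discussion whose only rigorous ingredient is the $\Phi$-argument for $\mathrm{IV}$/$\mathrm{IV}^\ast$; the types $\mathrm{II}$, $\mathrm{III}$ and $\mathrm{I}_n^\ast$ are supported by examples and by the author's computational search, and the text explicitly leaves open whether $\mathrm{I}_n^\ast$ occurs for $n\geq 5$ even though the statement lists all $n$. So you have reproduced everything the paper actually establishes and correctly located the step it does not; but as a proof of the stated theorem your proposal leaves the exclusion of $\mathrm{III}^\ast$ and $\mathrm{II}^\ast$ (and hence the completeness of the smooth-locus list) as an unexecuted and, as sketched, doubtful argument.
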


\section{The $\theta\!\!\!\theta$-elements}\label{sec:theta_elements}
Now, fix $S$. We then get a function 
\begin{equation}\label{eq:beta_1}
\boldsymbol{\theta}_{-;S}:\,\,E[p]\longrightarrow F, \quad [a]\mathtt{P}+[b]\mathtt{Q}\,\,\,\longmapsto\,\,\, \boldsymbol{\theta}=\boldsymbol{\theta}_{(a,b); S}=\sum_{i=0}^{p-1}\theta_i\beta^i.
\end{equation} This defines a one-dimensional \'etale $L$-rational point of order $p-1$, $$\mathfrak{N}_{\boldsymbol{\theta}, \mathbf{0}}=\Big\{\mathfrak{N}_j\mid 0\leq j\leq p-1\Big\}, \quad \mathfrak{N}_j=(0,0, \theta_j),$$ on $\Xscr_{\Inf_{\!f+d}}$. We can define a mirror family $\mathfrak{N}^\perp_{\boldsymbol{\theta}, \mathbf{0}}$ such that $\bar{\mathfrak{N}}_{\boldsymbol{\theta}, \mathbf{0}}^\perp=\zeta^{2-p}\bar{\mathfrak{N}}_{\boldsymbol{\theta}, \mathbf{0}}$.

Hence,
\begin{prop}The association (\ref{eq:beta_1}) sets up a correspondence between $E[p]$ and one-dimensional \'etale $L$-rational points of order in $\Xscr_{\Inf_{\!f+d}}$.  Each such \'etale $L$-rational point $\mathfrak{N}_{\boldsymbol{\theta}, \mathbf{0}}$ defines a unique mirror $\bar{\mathfrak{N}}_{\boldsymbol{\theta}, \mathbf{0}}^\perp$ on the special fibre $\Xscr_{\Inf_{\!f+d}}\otimes k_w$.
\end{prop}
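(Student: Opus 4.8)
The plan is to observe that the proposition is essentially a bookkeeping statement assembling constructions already made earlier in the section, and the work is to check that the three requisite clauses (the map is a correspondence, each image is an \'etale $L$-rational point of order $p-1$, and each such point has a canonical mirror) are exactly the properties established above. First I would spell out that (\ref{eq:beta_1}) is, by definition, a function $E[p]\to F$ depending on the fixed choice of $S$, and that composing with the coefficient decomposition $\boldsymbol{\theta}=\sum_{i=0}^{p-1}\theta_i\beta^i$ produces a vector $(\theta_0,\dots,\theta_{p-1})\in L^p$; the module structure $\mathfrak{N}_j=(0,0,\theta_j)$ then satisfies the three relations listed just before Theorem \ref{thm:ext_inf} automatically, because the first two are $0=0$ (as $\eps_0$ and $\eps_1$ act by $0$ on $\mathfrak{N}_j$) and the third is $(1-\bar\zeta^{2-p})\cdot 0\cdot\theta_j=0$. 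Hence each $\mathfrak{N}_j$ is a genuine one-dimensional $\Inf_{\!f+d}$-module, and $\mathfrak{N}_{\boldsymbol{\theta},\mathbf{0}}=\{\mathfrak{N}_j\mid 0\le j\le p-1\}$ is an \'etale $L$-rational point in the sense of Definition~2.x(a), the number of components being $p$ and the ``order'' being the order $p-1$ coming from the Galois action of $\mathrm{Gal}(F/L)$ permuting the $\beta^i$ (equivalently the $\theta_j$).

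Next I would verify the correspondence claim: two torsion points $[a]\mathtt{P}+[b]\mathtt{Q}$ and $[a']\mathtt{P}+[b']\mathtt{Q}$ give the same $\boldsymbol{\theta}$ (for our fixed $S$) precisely when $S$ takes the same value on them, so (\ref{eq:beta_1}) is a well-defined function to the set of such \'etale points, and it is this function — not necessarily an injection — that is being called a ``correspondence.'' I would be explicit that injectivity is not asserted (indeed it depends on the separating properties of $S$), so no further argument is needed there; what must be checked is only that the target genuinely consists of one-dimensional \'etale $L$-rational points of order $p-1$ on $\Xscr_{\Inf_{\!f+d}}$, which is the previous paragraph.

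Finally, for the mirror statement I would invoke the definition of the mirror of an \'etale $k_w$-rational point given just above the subsubsection heading: $\bar{\mathfrak{N}}_{\boldsymbol{\theta},\mathbf{z}}^\perp:=\bar\zeta^{2-p}\bar{\mathfrak{N}}_{\boldsymbol{\theta},\mathbf{z}}$, specialized here to $\mathbf{z}=\mathbf{0}$, so that $\bar{\mathfrak{N}}_{\boldsymbol{\theta},\mathbf{0}}^\perp=\bar\zeta^{2-p}\bar{\mathfrak{N}}_{\boldsymbol{\theta},\mathbf{0}}$ with components $(0,0,\bar\zeta^{2-p}\bar\theta_j)$; uniqueness of this mirror is immediate since $\bar\zeta^{2-p}$ is a fixed scalar once $\zeta=\zeta_w$ is chosen (recall we fixed $\zeta_w$ so that $\bar\zeta_w$ is primitive). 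Theorem~\ref{thm:ext_inf} then confirms that this is the correct object, in that $\Ext^1_{\overline{\Inf}_{\zeta}}(\bar{\mathfrak{N}}_j,\bar{\mathfrak{N}}_j^\perp)=k_w\neq 0$ while the ``otherwise'' case gives $0$, so the mirror is characterised intrinsically by carrying the unique tangent direction out of $\bar{\mathfrak{N}}_j$. I expect no real obstacle here; the only mild subtlety, which I would flag in a sentence, is the asymmetry $(\bar{\mathfrak{N}}_j^\perp)^\perp\neq\bar{\mathfrak{N}}_j$ in general, already noted in the text, so ``mirror'' is a directed notion and uniqueness is uniqueness of the target of the map $\bar\zeta^{2-p}\cdot(-)$, not of a symmetric pairing. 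Thus the proposition is proved by directly citing Definition~2.x, the pre-Theorem~\ref{thm:ext_inf} relations, Theorem~\ref{thm:ext_inf}, and the definition of mirror.
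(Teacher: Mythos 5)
Your proposal is correct and matches the paper's own treatment: the paper offers no separate proof, the proposition being exactly the assembly of the preceding construction (the relations making each $\mathfrak{N}_j=(0,0,\theta_j)$ an $\Inf_{\!f+d}$-module trivially satisfied, the definition of an \'etale $L$-rational point, the definition $\bar{\mathfrak{N}}_{\boldsymbol{\theta},\mathbf{0}}^\perp=\bar\zeta^{2-p}\bar{\mathfrak{N}}_{\boldsymbol{\theta},\mathbf{0}}$, and Theorem \ref{thm:ext_inf} supplying the tangent), which is precisely what you cite. One small correction that does not affect the argument: the ``order $p-1$'' should not be attributed to the Galois action of $\mathrm{Gal}(F/L)$, since $F/L$ is the Kummer extension of degree $p$ (it is $\mathrm{Gal}(L/K)$ that has order $p-1$), so the phrase is better read as bookkeeping on the index range of the family rather than as a Galois-theoretic statement.
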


There is another way to use the correspondence (\ref{eq:beta_1}). It is natural to say that the $\Inf_{\!f+d}$-module $\mathfrak{H}:=\Inf_{\!f+d}/(H)$, where $H:=a_0\eps_0+a_1\eps_1+a_2\eps_2\in \Inf_{\!f+d}$, is a \emph{hyperplane} in $\Xscr_{\Inf_{\!f+d}}$.

\begin{prop}The association $\boldsymbol{\theta}_{-;S}^\mathfrak{H}:\,\,E[p]\longrightarrow \Inf_{\!f+d}$,
\begin{equation}\label{eq:beta_2}
[a]\mathtt{P}+[b]\mathtt{Q}\,\,\,\longmapsto\,\,\, \sum_{i=0}^{p-1}\theta_i\beta^i\,\,\,\longmapsto\,\,\, H:=\theta_0\eps_0+\theta_1\eps_1+\theta_{p-1}\eps_2.
\end{equation} 
(recall $\eps_2$ is actually $\eps_{p-1}$) sets up a correspondence between $E[p]$ and hyperplanes in $\Xscr_{\Inf_{\!f+d}}$.  Hence the function $\boldsymbol{\theta}_{-;S}^\mathfrak{H}$ is a \emph{hyperplane arrangement} in $\Xscr_{\Inf_{\!f+d}}$ defined by $E[p]$.
\end{prop}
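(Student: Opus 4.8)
The plan is to treat this as an exercise in unwinding definitions: the real content is already contained in the definition of ``hyperplane'' recalled just above and in the expansion $\boldsymbol{\theta}=\sum_{i=0}^{p-1}\theta_i\beta^i$ used in the previous proposition, so the only work is making sure the coefficients land where they should. First I would observe that, $S$ being fixed and the coordinates of each point $[a]\mathtt{P}+[b]\mathtt{Q}\in E[p]$ lying in $F=K(E[p])$ by construction, the rule $[a]\mathtt{P}+[b]\mathtt{Q}\mapsto\boldsymbol{\theta}_{(a,b);S}=S([a]\mathtt{P}+[b]\mathtt{Q})$ is a well-defined function $E[p]\to F$ at the points where $S$ is regular. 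Since we are in the case $[F:L]=p$ --- the one in which $\Inf_{\!f+d}$ was built --- we have $F=L[\beta]$ with $\{1,\beta,\dots,\beta^{p-1}\}$ an $L$-basis, and expanding $\boldsymbol{\theta}$ in this basis gives the unique expression $\boldsymbol{\theta}=\sum_{i=0}^{p-1}\theta_i\beta^i$, $\theta_i\in L$, appearing in (\ref{eq:beta_2}).

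Next I would normalise coefficients: after multiplying $\boldsymbol{\theta}$ by a suitable element of $\oo_L$ (the same manoeuvre used earlier to arrange $x\in\oo_L$ when writing $F=L[\sqrt[p]{x}]$), or equivalently after base-changing $\Inf_{\!f+d}$ to $L[\zeta_w]$, one may assume every $\theta_i$ lies in the base ring $\oo_L[\zeta_w]$ of $\Inf_{\!f+d}$. Then $H:=\theta_0\eps_0+\theta_1\eps_1+\theta_{p-1}\eps_2$ is a bona fide linear form in the three generators $\eps_0,\eps_1,\eps_2=\eps_{p-1}$ of $\Inf_{\!f+d}$, so by the definition recalled immediately before the proposition the quotient module $\mathfrak{H}=\Inf_{\!f+d}/(H)$ is a hyperplane in $\Xscr_{\Inf_{\!f+d}}$. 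Composing the two steps, $[a]\mathtt{P}+[b]\mathtt{Q}\longmapsto H\longmapsto\Inf_{\!f+d}/(H)$, yields a well-defined map from $E[p]$ to the set of hyperplanes of $\Xscr_{\Inf_{\!f+d}}$; since $\#E[p]=p^{2}<\infty$ its image is a finite family of hyperplanes, i.e.\ a hyperplane arrangement, indexed by --- and in this sense \emph{defined by} --- $E[p]$. That is exactly the assertion.

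The only genuine obstacle is the coefficient bookkeeping just mentioned: $\boldsymbol{\theta}$ need not lie in $\oo_F$, so one must fix a normalisation of $\boldsymbol{\theta}$; scaling by a unit leaves $\mathfrak{H}$ unchanged since $(cH)=(H)$, whereas scaling by a non-unit rescales the associated divisor, so the choice is not completely harmless. I would also note, although it is not needed for the statement, that I do not expect the correspondence to be injective: two torsion points determine the same hyperplane precisely when the triples $(\theta_0,\theta_1,\theta_{p-1})$ coincide up to an $L^{\times}$-scalar, and describing that locus is a separate (and likely more interesting) question than the one asserted here.
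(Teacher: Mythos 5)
Your argument is correct and coincides with what the paper intends: the proposition is stated without proof, being a direct unwinding of the definitions — expand $\boldsymbol{\theta}_{(a,b);S}$ uniquely in the basis $\{1,\beta,\dots,\beta^{p-1}\}$ and read off the linear form $H=\theta_0\eps_0+\theta_1\eps_1+\theta_{p-1}\eps_2$, which is a hyperplane by the definition given just before the statement. Your extra remarks on normalising the $\theta_i$ into the base ring (mirroring the earlier clearing-of-denominators for $x$) and on the likely non-injectivity of the correspondence go slightly beyond the paper but do not change the argument.
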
Note that the restriction of $\boldsymbol{\theta}_{E[p];S}^\mathfrak{H}$ to the central subscheme defines an actual hyperplane arrangement in $\Spec(\cent(\Inf_{\!f+d}))$. 

\begin{remark}The above constructions are obviously not restricted to the particular choice $w=f+d$. 
\end{remark}

As a third, and final, construction involving the $\boldsymbol{\theta}$-elements we present a way to construct elements in the Brauer group of $L[\zeta_{\!f+d}]$. 

As mentioned in remark (\ref{rem:q-plane}) we can view $\Inf_{\!w}$ as the glueing of three quantum planes 
$$\mathbf{Q}_{i, j}:=\oo_L[\zeta_{f+d}]\langle \eps_i, \eps_j\rangle \Big/ (\eps_i\eps_j-\zeta_{f+d}^a\eps_j\eps_i), \quad 0\leq i<j\leq 2,$$where $a = 1$, $p-1$ or $p-2$.

Given a hyperplane $\mathfrak{H}=\theta_0\eps_0+\theta_1\eps_1+\theta_{p-1}\eps_2$ we can form the following quotients
$$\mathbf{Q}_{i, j}^{\boldsymbol{\theta}}:=\frac{\mathbf{Q}_{i, j}}{\Big(\eps_i^{f+d}=\theta_i, \,\, \eps_j^{f+d}=\theta_j\Big)}, \quad 0\leq i<j\leq 2.$$If $\theta_i\theta_j\neq 0$, $\mathbf{Q}_{i, j}^{\boldsymbol{\theta}}$ defines an Azumaya algebra over $\oo_L[\zeta_{\!f+d}]$, defining an element in  $\mathrm{Br}(\oo_L[\zeta_{\!f+d}])$. Consequently the generic fibre is a central simple algebra and hence defines an element in $\mathrm{Br}(L[\zeta_{\!f+d}])$. By the functoriality of $\mathrm{Br}$ there is a natural morphism $ \mathrm{Br}(\oo_L[\zeta_{\!f+d}])\to \mathrm{Br}(L[\zeta_{\!f+d}])$. 
\begin{prop}The association $\boldsymbol{\theta}_{-;S}^\mathrm{Br}:\,\,E[p]\longrightarrow \mathrm{Br}(L[\zeta_{\!f+d}])$,
\begin{equation}\label{eq:beta_3}
[a]\mathtt{P}+[b]\mathtt{Q}\,\,\,\longmapsto\,\,\, \sum_{l=0}^{p-1}\theta_l\beta^l\,\,\,\longmapsto\,\,\, \mathbf{Q}_{i, j}^{\boldsymbol{\theta}}, \quad 0\leq i<j\leq 2, \quad \theta_i\theta_j\neq 0,
\end{equation} sets up a correspondence between $E[p]$ and elements in $\mathrm{Br}(L[\zeta_{\!f+d}])$, and by restriction to a (possibly trivial) element in $\mathrm{Br}(L)$.
\end{prop}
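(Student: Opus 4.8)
The plan is to verify that the rule \eqref{eq:beta_3} is a well-defined (partially defined) map: for a fixed $S$, a torsion point $[a]\mathtt{P}+[b]\mathtt{Q}$ producing $\boldsymbol{\theta}=\sum_l\theta_l\beta^l$, and an admissible pair $0\le i<j\le 2$ with $\theta_i\theta_j\neq 0$, one must check that $\mathbf{Q}_{i,j}^{\boldsymbol{\theta}}$ is a central simple algebra over $L[\zeta_{f+d}]$ (and Azumaya over $\oo_L[\zeta_{f+d}]$ when the two coefficients are units), so that it has a class in $\mathrm{Br}(L[\zeta_{f+d}])$, and finally that this class can be pushed to $\mathrm{Br}(L)$. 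As in the two preceding propositions, ``correspondence'' means only this map; no injectivity or surjectivity is claimed, and the construction does depend on the auxiliary choices $(S,i,j)$.

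First I would establish centrality: writing $w:=f+d$ and letting $a=a_{ij}\in\{1,p-1,p-2\}$ be the exponent in $\eps_i\eps_j=\zeta_w^{a}\eps_j\eps_i$, a one-line induction using $\zeta_w^{w}=1$ gives $\eps_i^{w}\eps_j=\zeta_w^{aw}\eps_j\eps_i^{w}=\eps_j\eps_i^{w}$ and likewise $\eps_i\eps_j^{w}=\eps_j^{w}\eps_i$, so $\eps_i^{w},\eps_j^{w}$ lie in the centre of $\mathbf{Q}_{i,j}$. Hence $\eps_i^{w}-\theta_i$ and $\eps_j^{w}-\theta_j$ are central, the quotient $\mathbf{Q}_{i,j}^{\boldsymbol{\theta}}$ is free over $\oo_L[\zeta_w]$ of rank $w^2$ with basis the monomials $\eps_i^{s}\eps_j^{t}$ ($0\le s,t<w$), and this persists under base change. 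When $\gcd(a,w)=1$ --- so that $\zeta_w^{a}$ is again a \emph{primitive} $w$-th root of unity --- this identifies $\mathbf{Q}_{i,j}^{\boldsymbol{\theta}}$ with the symbol (cyclic) algebra $(\theta_i,\theta_j)_{\zeta_w^{a}}$ of degree $w$; I would then invoke the classical fact that a symbol algebra over a commutative ring containing a primitive $w$-th root of unity, both of whose arguments are invertible, is Azumaya of constant rank $w^2$ --- either by checking that $A\otimes_{\oo_L[\zeta_w]}A^{\mathrm{op}}\to\End_{\oo_L[\zeta_w]}(A)$ is an isomorphism, or by the fibre-wise criterion reduced to the standard statement over a field. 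Passing to the generic point, $\mathbf{Q}_{i,j}^{\boldsymbol{\theta}}\otimes_{\oo_L[\zeta_w]}L[\zeta_w]$ is the degree-$w$ symbol algebra over the field $L[\zeta_w]$ (here $\theta_i,\theta_j\in L^{\times}$ automatically, since $\theta_i\theta_j\neq 0$), hence a central simple $L[\zeta_w]$-algebra and an element of $\mathrm{Br}(L[\zeta_w])$; functoriality of $\mathrm{Br}$ along $\oo_L[\zeta_w]\hookrightarrow L[\zeta_w]$ gives the map $\mathrm{Br}(\oo_L[\zeta_w])\to\mathrm{Br}(L[\zeta_w])$ and the compatibility of the two constructions.

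To obtain an element of $\mathrm{Br}(L)$ I would use that $L[\zeta_w]/L$ is finite (abelian) and apply the corestriction (norm) map $\mathrm{Br}(L[\zeta_w])\to\mathrm{Br}(L)$; when $\zeta_w\in L$ already the symbol algebra is defined over $L$ outright and no descent is needed. Triviality of the resulting class is a genuine phenomenon --- it occurs, for instance, as soon as $\theta_i$ (or $\theta_j$) is a norm from the relevant Kummer extension, or in the degenerate cases below --- and this is exactly the qualifier ``possibly trivial'' in the statement.

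The hard part is the non-generic behaviour in the symbol-algebra step. When $\gcd(a_{ij},w)>1$ --- which does occur for the small $w=f+d$ that actually arise, e.g.\ $p=5$ with Kodaira type $\mathrm{III}$, where $w=2+2=4$ and $a=p-1=4$, so that $\zeta_4^{4}=1$ and the ``quantum plane'' $\mathbf{Q}_{0,2}$ is in fact commutative --- the centre of $\mathbf{Q}_{i,j}^{\boldsymbol{\theta}}$ is strictly larger than $\oo_L[\zeta_w]$ and the quotient is not Azumaya of rank $w^2$ over that base; one must then either enlarge the base along the central subscheme or simply record the trivial class. Keeping track of these degenerations, together with the gap between ``$\theta_i\neq 0$'' and ``$\theta_i\in\oo_L[\zeta_w]^{\times}$'' (which is why the clean Azumaya assertion really lives on the generic fibre), is what forces the cautious phrasing; everything else is routine assembly of standard facts about symbol algebras and of the functoriality/corestriction formalism for Brauer groups.
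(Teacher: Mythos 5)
Your proposal is correct and follows essentially the same route as the paper, whose ``proof'' is just the construction paragraph preceding the proposition: identify $\mathbf{Q}_{i,j}^{\boldsymbol{\theta}}$ with a symbol algebra over $\oo_L[\zeta_{f+d}]$, invoke the Azumaya/central-simple property when $\theta_i\theta_j\neq 0$, pass to the generic fibre, and use functoriality of $\mathrm{Br}$ (with corestriction, or triviality of the extension, to land in $\mathrm{Br}(L)$). Your added caveats --- that the clean Azumaya claim needs $\theta_i,\theta_j$ to be units (not merely nonzero) so it really lives on the generic fibre, and that $\zeta_{f+d}^{a_{ij}}$ may fail to be primitive for small $w=f+d$, degenerating the symbol algebra --- are genuine points the paper glosses over, and they are handled consistently with the ``possibly trivial'' qualifier in the statement.
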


The above correspondences (\ref{eq:beta_1}), (\ref{eq:beta_2}) and (\ref{eq:beta_3}) might deserve some further study.
\begin{remark}If $f+d=2$ the algebras $\mathbf{Q}_{i, j}^{\boldsymbol{\theta}}$, under the condition $\theta_i\theta_j\neq 0$, are all quaternion algebras. This can only happen when $E$ has Kodaira symbol $\mathrm{I}_1$, i.e., when $\bar\Ecal=\mathbb{P}^1$. 
\end{remark}
\begin{center}
\rule{0.50\textwidth}{0.5pt}
\end{center}
\bibliographystyle{alpha}
\bibliography{ref_Arit_NC}

\begin{thebibliography}{CDyDO03}

\bibitem[BP16]{BandiniPaladino}
Andrea Bandini and Laura Paladino.
\newblock Fields generated by torsion points of elliptic curves.
\newblock {\em J. Number Theory}, 169:103--133, 2016.

\bibitem[Cas67]{Cassels_Frohlich}
{\em Algebraic number theory}.
\newblock Proceedings of an instructional conference organized by the London
  Mathematical Society (a NATO Advanced Study Institute) with the support of
  the International Mathematical Union. Edited by J. W. S. Cassels and A.
  Fr\"{o}hlich. Academic Press, London; Thompson Book Co., Inc., Washington,
  D.C., 1967.

\bibitem[CDyDO03]{Cohen_et_al_Cyclotomic}
Henri Cohen, Francisco Diaz~y Diaz, and Michel Olivier.
\newblock Cyclotomic extensions of number fields.
\newblock {\em Indag. Math. (N.S.)}, 14(2):183--196, 2003.

\bibitem[Cre15]{Cremona}
J.~Cremona.
\newblock Elliptic curve data for conductors up to 350000, 2015.

\bibitem[ELS17]{EriksenLaudalSiqveland}
Eivind Eriksen, Olav~Arnfinn Laudal, and Arvid Siqveland.
\newblock {\em Noncommutative deformation theory}.
\newblock Monographs and Research Notes in Mathematics. CRC Press, Boca Raton,
  FL, 2017.

\bibitem[FK18]{FreitasKraus}
Nuno Freitas and Alain Kraus.
\newblock On the degree of the $p$-torsion field of elliptic curves over
  $\mathbb{Q}_\ell$ for $\ell \neq p$, 2018.

\bibitem[GJLR16]{Jimenez-Lozano}
Enrique Gonz\'{a}lez-Jim\'{e}nez and \'{A}lvaro Lozano-Robledo.
\newblock Elliptic curves with abelian division fields.
\newblock {\em Math. Z.}, 283(3-4):835--859, 2016.

\bibitem[HLS06]{HaLaSi}
Jonas~T. Hartwig, Daniel Larsson, and Sergei~D. Silvestrov.
\newblock Deformations of {L}ie algebras using {$\sigma$}-derivations.
\newblock {\em J. Algebra}, 295(2):314--361, 2006.

\bibitem[Kid03]{Kida_Ramification_Division_fields}
M.~Kida.
\newblock Ramification in the division fields of an elliptic curve.
\newblock {\em Abh. Math. Sem. Univ. Hamburg}, 73:195--207, 2003.

\bibitem[KP17]{KostersPannekoek}
Michiel {Kosters} and Ren{\'e} {Pannekoek}.
\newblock {On the structure of elliptic curves over finite extensions of
  $\mathbb{Q}_p$ with additive reduction}.
\newblock {\em arXiv:1703.07888}, Mar 2017.

\bibitem[Lar17]{LarArithom}
Daniel Larsson.
\newblock Equivariant hom-{L}ie algebras and twisted derivations on
  (arithmetic) schemes.
\newblock {\em J. Number Theory}, 176:249--278, 2017.

\bibitem[Lar23a]{LarssonKummerWitt}
Daniel Larsson.
\newblock Kummer--{W}itt--{J}ackson algebras.
\newblock {\em Preprint}, August 2023.
\newblock Available at http://www.numberlab.se.

\bibitem[Lar23b]{LarssonAritGeoLargeCentre}
Daniel Larsson.
\newblock Arithmetic {G}eometry of {A}lgebras with {L}arge {C}entres.
\newblock {\em Preprint}, June 2023.
\newblock Available at http://www.numberlab.se.

\bibitem[Lau02]{Laudal_Def}
O.~A. Laudal.
\newblock Noncommutative deformations of modules.
\newblock {\em Homology Homotopy Appl.}, 4(2, part 2):357--396, 2002.
\newblock The Roos Festschrift volume, 2.

\bibitem[LS07]{LaSi}
Daniel Larsson and Sergei~D. Silvestrov.
\newblock Quasi-deformations of {${\mathfrak{sl}}\sb 2(\mathbb{F})$} using
  twisted derivations.
\newblock {\em Comm. Algebra}, 35(12):4303--4318, 2007.

\bibitem[{Sag}20]{sagemath}
{Sage Developers, The}.
\newblock {\em {S}ageMath, the {S}age {M}athematics {S}oftware {S}ystem
  ({V}ersion 9.0)}, 2020.
\newblock {\tt https://www.sagemath.org}.

\bibitem[Ser72]{Serre_Galois_Elliptic}
Jean-Pierre Serre.
\newblock Propri\'{e}t\'{e}s galoisiennes des points d'ordre fini des courbes
  elliptiques.
\newblock {\em Invent. Math.}, 15(4):259--331, 1972.

\bibitem[Sil86]{SilvermanElliptic}
Joseph~H. Silverman.
\newblock {\em The arithmetic of elliptic curves}, volume 106 of {\em Graduate
  Texts in Mathematics}.
\newblock Springer-Verlag, New York, 1986.

\bibitem[Sil94]{SilvermanAdvanced}
Joseph~H. Silverman.
\newblock {\em Advanced topics in the arithmetic of elliptic curves}, volume
  151 of {\em Graduate Texts in Mathematics}.
\newblock Springer-Verlag, New York, 1994.

\bibitem[Yas13]{Yasuda_Kummer}
Masaya Yasuda.
\newblock Kummer generators and torsion points of elliptic curves with bad
  reduction at some primes.
\newblock {\em Int. J. Number Theory}, 9(7):1743--1752, 2013.

\end{thebibliography}

\end{document}